\newcommand{\email}[1]{\href{mailto:#1}{\nolinkurl{#1}}}
\newlength{\mySubFigSize}
\newtheorem{theorem}{Theorem}[section]
\newtheorem{lemma}[theorem]{Lemma}
\newtheorem{proposition}[theorem]{Proposition}
\newtheorem{assumption}[theorem]{Assumption}
\theoremstyle{plain}{\theorembodyfont{\rmfamily}%
}
\theoremstyle{plain}{\theorembodyfont{\rmfamily}%
}
\theoremstyle{plain}{\theorembodyfont{\rmfamily}%
\newtheorem{remark}[theorem]{Remark}}
\theoremstyle{plain}{\theorembodyfont{\rmfamily}%
\newtheorem{Algorithm}[theorem]{Algorithm}}
\theoremstyle{plain}{\theorembodyfont{\rmfamily}%
}
\theoremstyle{plain}{\theorembodyfont{\rmfamily}%
}
\theoremstyle{plain}{\theorembodyfont{\rmfamily}%
\newtheorem{definition}[theorem]{Definition}}
\theoremstyle{plain}{\theorembodyfont{\rmfamily}%
}
\numberwithin{equation}{section}
\definecolor{labelkey}{rgb}{0,0.08,0.45}
\definecolor{refkey}{rgb}{0,0.6,0.0}
\definecolor{Brown}{rgb}{0.45,0.0,0.05}
\definecolor{dgreen}{rgb}{0.00,0.49,0.00}
\definecolor{dblue}{rgb}{0,0.08,0.75}
\numberwithin{equation}{section}
\DeclareMathOperator{\Argmin}{\mathrm{Argmin}}
\DeclareMathOperator{\argmin}{\mathrm{argmin}}
\renewcommand{\leq}{\ensuremath{\leqslant}}
\renewcommand{\geq}{\ensuremath{\geqslant}}
\newcommand{\scal}[2]{{\langle{{#1}\mid{#2}}\rangle}}
\newcommand{\pair}[2]{{\left\langle{{#1},{#2}}\right\rangle}}
\newcommand{\abs}[1]{{\lvert {#1}\rvert}}
\newcommand{\norm}[1]{{\lVert {#1}\rVert}}
\newcommand{\menge}[2]{\big\{{#1}~\big |~{#2}\big\}}
\newcommand{\HH}{\ensuremath{{\mathcal H}}}
\newcommand{\hh}{\ensuremath{{\EuScript{H}}}}
\newcommand{\C}{\ensuremath{{\mathcal C}}}
\newcommand{\fmap}{\ensuremath{{\Phi}}}
\newcommand{\objfunc}{\ensuremath{\widehat{F}}}
\newcommand{\dictfunc}{\ensuremath{\phi}}
\newcommand{\proj}{\ensuremath{\operatorname{proj}}}
\newcommand{\XC}{\ensuremath{{\mathcal X}}}
\newcommand{\YC}{\ensuremath{{\mathcal Y}}}
\newcommand{\YY}{\ensuremath{\RR}}
\newcommand{\ZZZ}{\ensuremath{{Z}}}
\newcommand{\ud}{\ensuremath{{\mathrm d}}}
\newcommand{\Id}{\ensuremath{\operatorname{Id}}\,}
\newcommand{\cart}{\ensuremath{\raisebox{-0.5mm}{\mbox{\LARGE{$\times$}}}}}
\newcommand{\RR}{\ensuremath{\mathbb{R}}}
\newcommand{\RP}{\ensuremath{{\mathbb R}_+}}
\newcommand{\RPP}{\ensuremath{{\mathbb R}_{++}}}
\newcommand{\soft}[1]{\ensuremath{{\:\operatorname{soft}}_{{#1}}\,}}
\newcommand{\RX}{\ensuremath{\left]-\infty,+\infty\right]}}
\newcommand{\EE}{\ensuremath{\mathsf E}}
\newcommand{\PO}{\ensuremath{\mathsf P}}
\newcommand{\PP}{\ensuremath{ P}}
\newcommand{\QQ}{\ensuremath{ P}}
\newcommand{\NN}{\ensuremath{\mathbb N}}
\newcommand{\KK}{\ensuremath{\mathbb K}}
\newcommand{\pinf}{\ensuremath{{+\infty}}}
\newcommand{\minf}{\ensuremath{{-\infty}}}
\newcommand{\dom}{\ensuremath{\operatorname{dom}}}
\newcommand{\range}{\ensuremath{\operatorname{ran}}}
\newcommand{\prox}{\ensuremath{\operatorname{prox}}}
\newcommand{\sign}{\ensuremath{\operatorname{sign}}}
\begin{document}

\title{Consistent Learning by Composite Proximal Thresholding
\thanks{The work of P. L. Combettes was supported by the
CNRS MASTODONS project under grant 2013MesureHD and by the
CNRS Imag'in project under grant 2015OPTIMISME.}}

\author{Patrick L. Combettes$^1$,\; Saverio Salzo$^2$,\; 
and Silvia Villa$^3$\\[3mm]
\small
\small $\!^1$Sorbonne Universit\'es -- UPMC Univ. Paris 06,
UMR 7598, Laboratoire Jacques-Louis Lions\\
\small F-75005 Paris, France\\
\small \email{plc@ljll.math.upmc.fr}
\\[3mm]
\small
\small $\!^2$Universit\`a degli Studi di Genova,
Dipartimento di Matematica,
16146 Genova, Italy\\
\small \email{saverio.salzo@unige.it}
\\[3mm]
\small
\small $\!^3$Massachusetts Institute of Technology 
and Istituto Italiano di Tecnologia\\
\small Laboratory for Computational and Statistical Learning,
Cambridge, MA 02139, USA\\
\small \email{silvia.villa@iit.it}
}

\maketitle

\begin{abstract}
We investigate the modeling and the numerical solution of
machine learning problems with prediction functions which are 
linear combinations of elements of a possibly infinite-dimensional 
dictionary. We propose a novel flexible composite regularization 
model, which makes it possible to incorporate various priors on 
the coefficients of the prediction function, including sparsity
and hard constraints. We show that the estimators obtained by 
minimizing the regularized empirical risk 
are consistent in a statistical sense, 
and we design an error-tolerant composite 
proximal thresholding algorithm for computing such estimators.
New results on the asymptotic behavior of the proximal 
forward-backward splitting method are derived and exploited to 
establish the convergence properties of the proposed 
algorithm. In particular, our method features a 
$o(1/m)$ convergence rate in objective values.
\end{abstract}

\section{Introduction}
A central task in data science is to extract information from
collected observations. Optimization procedures play a central role
in the modeling and the numerical solution of data-driven
information extraction problems. In the present paper, 
we consider the problem of learning from examples
within the framework of generalized linear models
\cite{BulVan11,Evge00,Gyorfi02}.
The goal is to estimate a functional
relation $f$ from an input set $\XC$ into an output set
$\YC \subset \RR$. The data set consists of 
the observation of a finite number of realizations 
$z_n=(x_i,y_i)_{1\leq i\leq n}$ in $\XC\times\YC$ 
of independent input/ouput random pairs with an unknown common 
distribution $P$. 
We adopt a generalized linear model, i.e., we assume that the
target function $f$ can be approximated by estimators of the form
\begin{equation}
\label{e:model}
f_u=\sum_{k\in\KK}\mu_k\dictfunc_k,
\end{equation}
where $\mathbb{K}$ is at most countable, 
$u=(\mu_k)_{k\in\KK}\in\ell^2(\KK),$ and  
$(\dictfunc_k)_{k\in\KK}$ is a
family of bounded measurable functions from $\XC$ to $\YY$; 
such a family is called a \emph{dictionary}, 
and its elements are called \emph{features}.
The estimator $f_{\widehat{u}_{n,\lambda}}$ is
computed via the approximate minimization of the convex regularized 
empirical risk
\begin{align}
\label{e:problem} 
\widehat{u}_{n,\lambda}&\in\Argmin^{\varepsilon_n}_{u\in\ell^2(\KK)} 
\left(\frac 1 n \sum_{i=1}^n \abs{f_u (x_i)-y_i}^2+\lambda
\sum_{k\in\KK} g_k(\mu_k) \right),
\end{align}
where $\lambda\in\RPP$ and where the convex regularization
functions $(g_k)_{k\in\KK}$ enforce or promote prior knowledge 
on the coefficients $(\mu_k)_{k\in\KK}$ of the decomposition of the
target function $f$ with respect to the dictionary. 
Our objective is to select a family of regularizers 
$(g_k)_{k\in\KK}$ that model a broad range of prior
knowledge and, at the same time, lead to implementable solution 
algorithms that produce consistent estimators as the sample size 
$n$ becomes arbitrarily large. To satisfy this dual objective, we 
shall focus our attention on the following flexible composite
model: each function $g_k\colon\RR\to\RX$ is of the form
\begin{equation}
\label{e:problem1} 
g_k=\iota_{C_k}+\sigma_{D_k}+ h_k,\quad 
h_k-\eta \abs{\cdot}^r\in\Gamma^{+}_0(\RR),
\quad r\in\left]1,2\right],
\quad \eta\in\RPP,
\end{equation} 
where $\iota_{C_k}$ is the indicator function of a closed
interval $C_k\subset\RR$, $\sigma_{D_k}$ is the support 
function of an interval $D_k\subset\RR$, 
$\eta\in\RPP$, and $h_k\colon\RR\to\RP$ is convex and 
such that $h_k(0)=0$. In this model, the role of $C_k$ is
to explicitly enforce hard constraints and the role of $D_k$ is to
promote sparsity \cite{Siop07}. On the other hand,
$h_k$ provides stability and will be seen to be
instrumental in guaranteeing consistency. 
Note that the model \eqref{e:problem}--\eqref{e:problem1}  
refines that considered in \cite{Siop07} and that it
encompasses ridge regression \cite{Gyorfi02,Hoe70}, 
elastic net \cite{Demo09,ZouHastie2005}, bridge regression 
\cite{Fu1998}, and generalized Gaussian models \cite{Anto02}. 
Proximal thresholders \cite{Siop07}, which extend the basic
notion of a soft thresholder, will play a key role in our analysis. 

The main objective of our paper is to investigate statistical and
algorithmic aspects of the estimators based on 
\eqref{e:problem}--\eqref{e:problem1}. Our main contributions are
the following:
\begin{itemize}
\item
We prove the consistency of the estimators 
$(f_{\widehat{u}_{n,\lambda}})_{n\in\NN}$ 
as $n\to\pinf$, as well as the convergence 
of the corresponding coefficients 
$(\widehat{u}_{n,\lambda})_{n\in\NN}$ in $\ell^r(\KK)$. 
This generalizes in particular the
analysis of \cite{Demo09}, which corresponds to the special case
when $C_k=\RR$, $D_k=[-\omega_k,\omega_k]$, and 
$h_k=\eta \abs{\cdot}^2$. In this case, \eqref{e:problem1} reduces
to
\begin{equation}
\label{e:problem9} 
g_k=\omega_k|\cdot|+\eta|\cdot|^2.
\end{equation} 
\item
We establish new asymptotic properties for an error-tolerant 
forward-backward splitting algorithm based on proximal 
thresholders. In particular, we establish new minimizing properties
and a rate of convergence $o(1/m)$ for the objective function values
in the presence of variable proximal parameters, relaxations, 
and computational errors. These results, which are of interest in
their own right, improve on the state of the art, which considers
either the error free-case and the non-relaxed version 
\cite{Bred09,Davi15}, or convergence only in an ergodic 
sense \cite{Schm11}.
\end{itemize}

The paper is organized as follows. In Section~\ref{sec:section},
we set the problem formally and present the main results 
concerning the statistical and algorithmic issues pertaining to
the proposed estimators.
Section~\ref{sec:stat} is devoted to proving the consistency
of the estimators, which is established in Theorem~\ref{thm:main}. 
In Section~\ref{sec:algorithm}, we establish Theorem~\ref{t:222}, 
which concerns the asymptotic behavior of a proximal 
forward-backward splitting algorithm, and 
Theorem~\ref{thm:algoconvergence}, 
which specifically deals with the structure considered in 
\eqref{e:problem}--\eqref{e:problem1}.
Additional properties of the regularizers defined 
in \eqref{e:problem}
are studied in Appendices~\ref{app:lsc} and \ref{app:proxr}.

\noindent
{\bfseries Notation.}
$\NN^*=\NN\smallsetminus\{0\}$, $\RR_+=[0,+\infty[$, and 
$\RPP=\left]0,+\infty\right[$.
Throughout, $\KK$ is an at
most countably infinite index set.
We denote by $(e_k)_{k\in\KK}$ the canonical orthonormal 
basis of $\ell^2(\KK)$. The canonical norm of $\ell^r(\KK)$ is
denoted by $\norm{\cdot}_r$. Let $\HH$ be a real Hilbert space.
We denote by $\scal{\cdot}{\cdot}$ and $\norm{\cdot}$ 
the scalar product and the associated norm of $\HH$.
The set of proper lower semicontinuous convex functions from
$\HH$ to $\RX$ is denoted by $\Gamma_0(\HH)$, and the subset
of $\Gamma_0(\HH)$ of functions valued in $[0,\pinf]$ by
$\Gamma^+_0(\HH)$.
Let $\varphi\in\Gamma_0(\HH)$.
The subdifferential of $\varphi$ at $u\in\HH$ is
$\partial \varphi (u)=\menge{u^*\in\HH}{(\forall v\in\HH)\;
\varphi(u)+\scal{v-u}{u^*}\leq\varphi(v)}$ and,
for every $\varepsilon\in\RPP$, 
$\Argmin_\HH^\varepsilon \varphi 
=\menge{u\in\HH}{\varphi(u)\leq\inf\varphi(\HH)+\varepsilon}$.
Let $\mathcal{D} \subset \HH$.
The indicator function of $\mathcal{D}$ is denoted 
by $\iota_{\mathcal{D}}$ and
the support function of $\mathcal{D}$ is 
$\sigma_\mathcal{D}\colon \HH \to \RX\colon u \mapsto 
\sup_{v\in\mathcal{D}}\scal{v}{u}$.
Let $u\in\HH$. Then 
$\prox_{\varphi}u=\argmin_{v\in\HH}(\varphi(v)+(1/2)\norm{u-v}^2)$ 
\cite{Mor62b}.
Suppose that $\mathcal{D}$ is a nonempty, closed, and convex subset
of $\HH$. Then
$\prox_{\iota_\mathcal{D}}=\proj_{\mathcal{D}}$ is
the projection operator onto $\mathcal{D}$, and 
$\prox_{\sigma_\mathcal{D}}=\Id-\proj_{\mathcal{D}}
=\soft{\mathcal{D}}{}$ is
the soft-thresholder with respect to $\mathcal{D}$.
For background on convex analysis and optimization, see
\cite{Livre1}.

\section{Problem setting and main results}
\label{sec:section}

The following assumption will be made in our main results.

\begin{assumption}\label{ass}{\rm
$(\XC,\mathfrak{A}_{\XC})$ is a measurable space, 
$\YC \subset \RR$ is a nonempty bounded interval, and 
$b=\sup_{y\in\YC} \abs{y}$. Moreover, $\PP$ is
a probability measure on
$\XC\times\YC$ with marginal $\PP_\XC$ 
on $\XC$.
The \emph{risk} is
\begin{equation}
\label{e:risk}
R \colon L^2(\PP_\XC) \to \RP\colon
f\mapsto\int_{\XC\times\YC} \abs{f(x)-y}^2\,\ud P(x,y)
\end{equation}
and $(\dictfunc_k)_{k\in\KK}$ is a family of measurable functions 
from $\XC$ to $\RR$ such that, for some $\kappa\in\RPP$,
\begin{equation}
\label{eq:l2dic}
\sup_{x\in\XC}\sum_{k\in\KK}\abs{\dictfunc_k(x)}^2\leq\kappa^2.
\end{equation}
The \emph{feature map} is
\begin{equation}
\label{eq:featuremap}
\fmap\colon \XC \to \ell^2(\KK)\colon x \mapsto 
(\dictfunc_k(x))_{k\in\KK}
\end{equation}
and
\begin{equation}
\label{eq:fmap_dictionary0}
A\colon \ell^2(\KK) \to \YY^{\XC}\colon u=(\mu_k)_{k\in\KK}
\mapsto f_u=\sum_{k\in\KK} \mu_k \dictfunc_k\;\;
\text{(pointwise)}.
\end{equation}
In addition, 
\begin{enumerate}[label=\rm(\alph*)]
\item
$(C_k)_{k\in\KK}$ is a family of 
closed intervals in $\RR$ such that $0\in\bigcap_{k\in\KK} C_k$.
\item
$(D_k)_{k\in\KK}$ is a family of nonempty
closed bounded intervals in $\RR$ such that 
$\sum_{k\in\KK}|(\inf D_k)_+|^{r^*}<\pinf$
and $\sum_{k\in\KK}|(\inf D_k)_-|^{r^*}<\pinf$.
\item $(h_k)_{k\in\KK}$ is a
family in $\Gamma_0^+(\RR)$ such that
$(\forall k\in\KK)$ $h_k(0)=0$ and 
$h_k -\eta \abs{\cdot}^r\in\Gamma_0^+(\RR)$ 
for some $r\in\left]1,2\right]$ 
and $\eta\in\RPP$.
\end{enumerate}
We define
\begin{equation}
\label{e:problemX} 
\begin{cases}
(\forall k\in\KK)\quad
g_k=\iota_{C_k}+\sigma_{D_k}+h_k\\
F=R\circ A\colon\ell^2(\KK)\to\RR\\
G\colon \ell^2(\KK)\to\RX\colon u \mapsto
\sum_{k\in\KK}g_k(\mu_k)\\
\C=\overline{A\big(\ell^2(\KK) \cap
\cart_{k\in\KK} C_k \big)}\quad\text{(closure is taken 
in $L^2(\PP_\XC)$)}.
\end{cases}
\end{equation} 
$(X_i,Y_i)_{i\in\NN}$ is a sequence of i.i.d.~random variables, on
an underlying probability space
$(\Omega, \mathfrak{A}, \PO)$, taking values in $\XC\times\YC$
and distributed according to $\PP$. For every $n\in\NN^*$, 
$Z_n=(X_i,Y_i)_{1\leq i\leq n}$.
The function $\varepsilon\colon \RPP \to [0,1]$ satisfies 
$\varepsilon(\lambda) \to 0$ as $\lambda \to 0^+$.
Moreover, for every $n\in\NN^*$, 
every $\lambda\in\RPP$, and every training set 
$z_n=(x_i,y_i)_{1\leq i\leq n}\in (\XC\times\YC)^n$
\begin{align}
\label{eq:learnalgo}
\widehat{u}_{n,\lambda}(z_n)&\in
\Argmin^{\varepsilon(\lambda)}_{u\in\ell^2(\KK)} 
\left(\frac 1 n \sum_{i=1}^n \abs{f_u (x_i)-y_i}^2+\lambda
G(u) \right).
\end{align}
}
\end{assumption}

\begin{remark} \
\label{rmk2}
\begin{enumerate}
\item
\label{rmk2i} 
The proposed learning 
method falls into the class of regularized
empirical risk minimization algorithms. However, it differs from
the classical setting which uses the squared
norm as a regularizer \cite{CucSma02,Devito05,Evge00}.
\item
\label{rmk2ii}
The conditions on the sequences 
$((\inf D_k)_+)_{k\in\KK}$ and
$((\sup D_k)_-)_{k\in\KK}$ given in Assumption~\ref{ass}
ensure that $G\in\Gamma_0(\ell^2(\KK))$.
Moreover, $\dom G\subset\ell^r(\KK)$ and $G$ is bounded 
from below and coercive
(see Lemma~\ref{l:20151126a}).
\item 
It follows from \eqref{eq:l2dic} that the linear operator $A$ is
well defined and continuous with respect to the topology of the
pointwise convergence on $\YY^{\XC}$,
that $\range A\subset L^\infty(\PP_{\XC})$, and that 
$A\colon \ell^2(\KK) \to L^2(\PP_{\XC})$ is a bounded linear
operator such that $\norm{A}\leq\kappa$.
The feature map $\Phi$ and $A$ are connected via the identities 
\begin{equation}
\label{e:2015-02-14a}
(\forall k\in\KK)(\forall x\in\XC)\quad
\scal{\fmap(x)}{e_k}=(A e_k) (x).
\end{equation}
In \cite[Proposition 3]{Demo09} it is
shown that $\range A$ can be endowed with a reproducing kernel
Hilbert space structure for which $A$ becomes a partial isometry,
and the corresponding reproducing kernel is
\begin{equation}
K\colon \XC \times \XC \to \RR\colon (x,x^\prime)\mapsto 
\sum_{k\in\KK} \dictfunc_k(x) \dictfunc_k(x^\prime).
\end{equation}
\end{enumerate}
\end{remark}

In the above setting, the goal is to minimize the risk $R$ 
of \eqref{e:risk} on the
closed convex subset $\C$ of $L^2(\PP_\XC)$ using the  $n$
i.i.d.~observations $Z_n=(X_i,Y_i)_{1\leq i\leq n}$.
In this respect, recall that the regression function $f^\dag$ 
is the minimizer of the risk on $L^2(\PP_\XC)$  and that
\begin{equation}
(\forall\, f\in L^2(\PP_\XC))\quad 
R(f)-\inf\, R(L^2(\PP_\XC))=\norm{f-f^\dag}^2_{L^2}.
\end{equation}
This means that minimizing $R$ 
on $L^2(\PP_\XC)$ is equivalent to
approximating the regression function $f^\dag$.
In our constrained setting, the solution to the regression problem
on $\C$ results in a target function $f_\C$ with the following
properties.  

\begin{proposition}
\label{p:regfunc} 
Suppose that Assumption~\ref{ass} is in force.
Then there exists a unique $f_\C\in\C$ such that $R(f_\C) =
\inf R(\C)$. Moreover, the following hold:
\begin{enumerate}
\item\label{p:regfunci} 
$f_\C$ is the projection of $f^\dag$ onto $\C$ in
$L^2(\PP_\XC)$.
\item
\label{p:regfuncii} 
$(\forall f\in\C)$ $\norm{f-f_\C}^2_{L^2}\leq R(f)-\inf R(\C)$.
\item
\label{p:regfunciii}  
$(\forall f\in\C)\;$ $R(f)-\inf R({\C})
\leq \!2\big[\big( \norm{f-f_\C}_{L^2}+\sqrt{ \inf
R(\C)-\inf R(L^2(\PP_\XC))} \big)^2$\\
$\hspace{6.7cm}+\inf R(L^2(\PP_\XC))\big]^{1/2}\norm{f-f_\C}_{L^2}.$
\end{enumerate}
\end{proposition}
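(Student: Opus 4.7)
The plan hinges on the classical bias–variance decomposition of the risk and the structural fact that $\C$ is a closed convex subset of $L^2(\PP_\XC)$, which lets us recast $R$-minimization on $\C$ as an orthogonal projection.

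First, I would verify that $\C$ is nonempty, closed and convex in $L^2(\PP_\XC)$. Nonemptiness follows because $0 \in \bigcap_{k\in\KK} C_k$ and $A(0)=0\in\C$. Closedness is built into the definition. For convexity, note that $\cart_{k\in\KK} C_k$ is convex (each $C_k$ is an interval), so $\ell^2(\KK)\cap\cart_{k\in\KK}C_k$ is convex; since $A$ is linear its image is convex, and taking closure in $L^2(\PP_\XC)$ preserves convexity. Since $Y$ is bounded, $f^\dag(x)=\mathbb{E}[Y\mid X=x]$ lies in $L^\infty(\PP_\XC)\subset L^2(\PP_\XC)$, so the projection $\proj_\C f^\dag$ is a well-defined element of $\C$.

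Next, I would establish the bias–variance identity: a standard conditioning argument (using $\mathbb{E}[Y\mid X]=f^\dag(X)$) gives, for every $f\in L^2(\PP_\XC)$,
\begin{equation*}
R(f)=\|f-f^\dag\|_{L^2}^2+R(f^\dag)=\|f-f^\dag\|_{L^2}^2+\inf R(L^2(\PP_\XC)).
\end{equation*}
Consequently, minimizing $R$ over $\C$ is equivalent to minimizing $\|f-f^\dag\|_{L^2}^2$ over $\C$, which admits the unique minimizer $f_\C=\proj_\C f^\dag$. This proves existence, uniqueness, and (i). For (ii), the standard variational characterization of the projection yields $\|f-f^\dag\|_{L^2}^2\geq\|f-f_\C\|_{L^2}^2+\|f_\C-f^\dag\|_{L^2}^2$ for every $f\in\C$; subtracting the identity evaluated at $f_\C$ from the one at $f$ gives $R(f)-R(f_\C)\geq\|f-f_\C\|_{L^2}^2$.

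For (iii), which is the main obstacle since we must reach the exact stated form, I would expand
\begin{equation*}
R(f)-R(f_\C)=\int_{\XC\times\YC}(f(x)-f_\C(x))(f(x)+f_\C(x)-2y)\,\ud\PP(x,y),
\end{equation*}
apply the Cauchy--Schwarz inequality in $L^2(\PP)$, and use the crude bound $\|f+f_\C-2Y\|_{L^2(\PP)}^2\leq 2R(f)+2R(f_\C)\leq 4R(f)$, where the last step uses $R(f_\C)\leq R(f)$ (since $f\in\C$ and $f_\C$ minimizes $R$ on $\C$). This yields
\begin{equation*}
R(f)-R(f_\C)\leq 2\sqrt{R(f)}\,\|f-f_\C\|_{L^2}.
\end{equation*}
To recognize the stated expression, I would then invoke the bias–variance identity once more together with the triangle inequality $\|f-f^\dag\|_{L^2}\leq\|f-f_\C\|_{L^2}+\|f_\C-f^\dag\|_{L^2}$ and the identification $\|f_\C-f^\dag\|_{L^2}^2=R(f_\C)-R(f^\dag)=\inf R(\C)-\inf R(L^2(\PP_\XC))$, giving $R(f)\leq(\|f-f_\C\|_{L^2}+\sqrt{\inf R(\C)-\inf R(L^2(\PP_\XC))})^2+\inf R(L^2(\PP_\XC))$. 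Substituting into the previous inequality produces exactly the bound in (iii). The only delicate point is matching the explicit functional form involving the two infima, which is achieved through this two-step upper bound on $\sqrt{R(f)}$.
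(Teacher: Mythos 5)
Your proposal is correct and follows essentially the same route as the paper: the bias--variance identity $R(f)=\norm{f-f^\dag}^2_{L^2}+\inf R(L^2(\PP_\XC))$ turns minimization over the closed convex set $\C$ into the projection of $f^\dag$, the obtuse-angle characterization of the projection gives \ref{p:regfuncii}, and \ref{p:regfunciii} is obtained from the intermediate bound $R(f)-\inf R(\C)\leq 2\sqrt{R(f)}\,\norm{f-f_\C}_{L^2}$ followed by the same two-step estimate of $\sqrt{R(f)}$ via the triangle inequality and the identification $\norm{f_\C-f^\dag}^2_{L^2}=\inf R(\C)-\inf R(L^2(\PP_\XC))$. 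The only (immaterial) difference is that you derive the intermediate bound by expanding $R(f)-R(f_\C)$ as an integral and applying Cauchy--Schwarz with $\norm{f+f_\C-2Y}_{L^2(\PP)}\leq 2\sqrt{R(f)}$, whereas the paper gets it from the elementary scalar inequalities $\sqrt{b+c}-\sqrt{a+c}\leq\sqrt{b}-\sqrt{a}$ and $a^2-b^2\leq 2a(a-b)$ applied to the bias--variance identity.
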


Proposition~\ref{p:regfunc} states that, as in the unconstrained
case, minimizing the risk over
$\C$ is still equivalent to approaching $f_\C$ in
$L^2(\PP_\XC)$.  
It is worth noting that we do not assume that
$f_\C=f_u$ for some $u\in\dom G$, since the infimum of $R$ on
$A(\dom G)$ may not be attained. A \emph{consistent learning
scheme} generates a random variable $\widehat{u}_{n,\lambda_n}(Z_n)$,
taking values in $\ell^2(\KK)$,
from $n$ i.i.d.~observations $Z_n=(X_i,Y_i)_{1\leq i\leq n}$, 
so that the resulting sequence of random functions 
$(\hat{f}_n)_{n\in\NN}=(A\widehat{u}_{n,\lambda_n}(Z_n))_{n\in\NN}$ 
is \emph{weakly consistent} in the sense that
\begin{equation}
R(\hat{f}_n) \to\inf R(\C)\;\;\text{in probability} \quad
\Leftrightarrow\quad\norm{\hat{f}_n-f_\C}_{L^2}\to 0\;\text{in
probability},
\end{equation}
or \emph{strongly consistent} in the sense that
\begin{equation}
R(\hat{f}_n) \to\inf R(\C)\quad\PO\text{-a.s.} \quad
\Leftrightarrow\quad\norm{\hat{f}_n-f_\C}_{L^2}\to 0
\quad\PO\text{-a.s.},
\end{equation}
depending on the assumption on the regularization parameters
$(\lambda_n)_{n\in\NN}$.

Next, we first state our consistency result and then
present an algorithm to compute the proposed estimators.

\begin{theorem}
\label{thm:main}
Suppose that Assumption~\ref{ass} is in force and let $f_{\C}$ be
defined as in Proposition~\ref{p:regfunc}.
Let $(\lambda_n)_{n\in\NN}$ be a sequence in 
$\left]0,+\infty\right[$ converging to $0$ and, for every $n\in\NN$,
let $\hat{f}_n=A \widehat{u}_{n,\lambda_n}(Z_n)$.
Then the following hold:
\begin{enumerate}
\item
\label{thm:maini} 
Suppose that $\varepsilon(\lambda_n)/{\lambda_n^{4/r}}\to 0$ 
and that  $1/(\lambda_n^{2/r} n^{1/2}) \to 0$.
Then $(\hat{f}_n)_{n\in\NN}$ is weakly consistent, i.e.,
$\norm{\hat{f}_n-f_\C}_{L^2}\to 0$ in probability.
\item
\label{thm:mainii} 
Suppose that $\varepsilon(\lambda_n)=O(1/n)$ and
that $(\log n)/(\lambda_n^{2/r} n^{1/2}) \to 0$.
Then $(\hat{f}_n)_{n\in\NN}$ is strongly consistent, i.e.,
$\norm{\hat{f}_n-f_\C}_{L^2}\to 0\;\PO\text{-a.s.}$
\item
\label{thm:mainiii} 
Suppose that $f_\C\in A(\dom G)$ and set
$S=\Argmin_{\dom G} F$. Then there exists a unique 
$u^\dag\in S$ which minimizes $G$ over $S$ and $A u^\dag=f_\C$. 
Moreover, the following hold:
\begin{enumerate}
\item
\label{thm:mainiiia}  
Suppose that $\varepsilon(\lambda_n)/\lambda_n^2 \to 0$ and that 
$1/(\lambda_n n^{1/2})\to 0$. Then 
\begin{equation}
\norm{\widehat{u}_{n,\lambda_n}(Z_n)-u^\dag}_r\to 0
\quad\text{in probability}.
\end{equation}
\item  
\label{thm:mainiiib}
Suppose that $\varepsilon(\lambda_n)=O(1/n)$ and that 
$(\log n)/(\lambda_n n^{1/2})\to 0$. Then
\begin{equation}
\norm{\widehat{u}_{n,\lambda_n}(Z_n)-u^\dag}_r \to
0\quad\PO\text{-a.s.}
\end{equation}
\end{enumerate}
\end{enumerate}
\end{theorem}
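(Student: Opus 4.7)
The plan is to introduce the deterministic regularized estimator $u_\lambda$ that uniquely minimizes $F+\lambda G$ on $\ell^2(\KK)$ (existence coming from coercivity of $G$, cf.\ Remark~\ref{rmk2}(\ref{rmk2ii})) and to compare $\widehat u_{n,\lambda}(Z_n)$ to $u_\lambda$ via the decomposition
\[
F(\widehat u_{n,\lambda}(Z_n))-\inf R(\C)
\leq (F-F_n)(\widehat u_{n,\lambda}) + (F_n-F)(u_\lambda) + \varepsilon(\lambda) + \bigl[F(u_\lambda)+\lambda G(u_\lambda)-\inf R(\C)\bigr],
\]
obtained from the approximate optimality of $\widehat u_{n,\lambda}$ and the non-negativity of $G$, where $F_n(u)=(1/n)\sum_{i=1}^n|f_u(x_i)-y_i|^2$. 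The last bracket is a deterministic \emph{approximation error} that I would handle by a Tikhonov-regularization argument: since $\C=\overline{A(\ell^2(\KK)\cap\Cart_{k\in\KK}C_k)}$ in $L^2(\PP_\XC)$ and $\norm{A}\leq\kappa$, one has $\inf F(\dom G)=\inf R(\C)$, and standard convex-analytic reasoning gives $F(u_\lambda)+\lambda G(u_\lambda)-\inf R(\C)\to 0$ as $\lambda\to 0^+$.

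The two stochastic terms require an \emph{a priori} bound on the estimator. Testing $\widehat u_{n,\lambda}$ against $0\in\ell^2(\KK)$ in \eqref{eq:learnalgo}, and using $G(0)=0$ (each $g_k$ vanishes at $0$) together with $F_n(0)\leq b^2$, yields $\lambda G(\widehat u_{n,\lambda})\leq b^2+\varepsilon(\lambda)$; combined with the lower estimate $G(u)\geq\eta\norm{u}_r^r-C_G$ (from Assumption~\ref{ass}(c), the summability conditions on the $D_k$, and Lemma~\ref{l:20151126a}) this produces the deterministic bound $\norm{\widehat u_{n,\lambda}(z_n)}_r\leq C_0\lambda^{-1/r}$. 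Since the loss $(x,y)\mapsto|f_u(x)-y|^2$ is bounded by $(\kappa\norm{u}_2+b)^2\leq(\kappa\norm{u}_r+b)^2$ (using $\ell^r\hookrightarrow\ell^2$ and \eqref{eq:l2dic}), Hoeffding's inequality applied at the fixed point $u_\lambda$, together with a uniform-deviation argument on the $\ell^r$-ball of radius $\rho=C_0\lambda^{-1/r}$, yields $\sup_{\norm{u}_r\leq\rho}|F_n(u)-F(u)|\lesssim (\rho+b)^2 t/\sqrt n$ with probability at least $1-2\exp(-ct^2)$. Summing up, the total excess risk is of order $\lambda^{-2/r}/\sqrt n+\varepsilon(\lambda)+o_\lambda(1)$: this yields (\ref{thm:maini}) in probability under $\varepsilon(\lambda_n)/\lambda_n^{4/r}\to 0$, and (\ref{thm:mainii}) almost surely via Borel--Cantelli with $t^2=c'\log n$ and $\varepsilon(\lambda_n)=O(1/n)$. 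Proposition~\ref{p:regfunc}(\ref{p:regfuncii}) then converts the excess risk into $L^2(\PP_\XC)$-convergence of $\widehat f_n$ to $f_\C$.

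For part~(\ref{thm:mainiii}), the uniqueness of $u^\dag$ follows from the fact that $G-\eta\norm{\cdot}_r^r$ is convex on $\ell^r(\KK)$ (as a separable sum of $g_k-\eta|\cdot|^r\in\Gamma_0^+(\RR)$), so $G$ is uniformly convex of order $r$ on $\dom G\subset\ell^r(\KK)$. A Bregman-type inequality at the minimizer $u^\dag$ of $G$ over $S$ then produces
\[
\eta\norm{\widehat u_{n,\lambda}(Z_n)-u^\dag}_r^r \leq \bigl[F(\widehat u_{n,\lambda})-F(u^\dag)\bigr] + \lambda\bigl[G(\widehat u_{n,\lambda})-G(u^\dag)\bigr],
\]
and the right-hand side enters \emph{linearly} (not quadratically) in $\norm{\widehat u_{n,\lambda}-u^\dag}_r$, so the sample-error contribution scales like $\lambda^{-1/r}/\sqrt n$ instead of $\lambda^{-2/r}/\sqrt n$; this accounts for the weaker hypotheses $\varepsilon(\lambda_n)/\lambda_n^2\to 0$ and $1/(\lambda_n n^{1/2})\to 0$ in (\ref{thm:mainiiia})--(\ref{thm:mainiiib}). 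The main obstacle I anticipate is establishing the uniform deviation with a sharp quadratic dependence on the $\ell^r$-radius, so that the exponents $4/r$ and $2/r$ arise exactly as stated; a crude Lipschitz estimate for $|F_n-F|$ would produce an extra factor and force more restrictive conditions on $(\lambda_n)_{n\in\NN}$.
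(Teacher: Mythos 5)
Your route for \ref{thm:maini}--\ref{thm:mainii} (excess-risk decomposition plus a uniform deviation bound for $F_n-F$ over the $\ell^r$-ball of radius $\rho\sim\lambda^{-1/r}$) is genuinely different from the paper's, which never needs uniform control: the paper proves a stability/representer theorem (Theorem~\ref{thm:stability2}) which, via the total convexity of $G$ (Lemma~\ref{lem:totconvG}) and Ekeland's variational principle, bounds $\norm{\widehat{u}_{n,\lambda}(z_n)-u_\lambda}_r$ by $\lambda^{-1}\rho_\lambda^{2-r}$ times the deviation of the single $\ell^2(\KK)$-valued empirical mean $\frac1n\sum_{i=1}^n\Psi_\lambda(x_i,y_i)$ from its expectation, and then applies Bernstein's inequality at the \emph{fixed} point $u_\lambda$. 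Your plan could in principle be made to work, but the step you yourself flag as the ``main obstacle'' is exactly the load-bearing one: the bound $\sup_{\norm{u}_r\leq\rho}\abs{F_n(u)-F(u)}\lesssim(\rho+b)^2t/\sqrt n$ does not follow from Hoeffding at a point; it requires symmetrization and a Rademacher-contraction (or covering-number) argument for the loss class over the growing ball, and without the sharp quadratic dependence on $\rho$ the exponents $4/r$ and $2/r$ are not obtained. As written, the whole of (i)--(ii) rests on an unproven lemma.

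Part \ref{thm:mainiii} contains a genuine error. The inequality $\eta\norm{\widehat u_{n,\lambda}-u^\dag}_r^r\leq\big[F(\widehat u_{n,\lambda})-F(u^\dag)\big]+\lambda\big[G(\widehat u_{n,\lambda})-G(u^\dag)\big]$ is false in general: $u^\dag$ minimizes $G$ only over $S$, so one does not have $0\in\partial G(u^\dag)$, and the total-convexity inequality of Lemma~\ref{lem:totconvG} leaves an inner-product term $\scal{\widehat u_{n,\lambda}-u^\dag}{\xi}$ with $\xi\in\partial G(u^\dag)$ that has no sign when $\widehat u_{n,\lambda}\notin S$. Concretely, if $\widehat u_{n,\lambda}\in S\smallsetminus\{u^\dag\}$ the right-hand side equals $\lambda\big(G(\widehat u_{n,\lambda})-G(u^\dag)\big)\to 0$ as $\lambda\to 0$ while the left-hand side is a fixed positive number. (Your resulting scaling $\lambda^{-1/r}/\sqrt n$ also does not match the stated hypothesis $1/(\lambda_n n^{1/2})\to 0$.) The mechanism the paper actually uses is different: when $S\neq\varnothing$ the family $(u_\lambda)_{\lambda\in\RPP}$ is bounded, so the radius entering the stability estimate is $O(1)$ rather than $O(\lambda^{-1/r})$ and the prefactor becomes $1/\lambda$ instead of $\lambda^{-2/r}$; one then still needs the viscosity-selection limit $u_\lambda\to u^\dag$ and the identity $Au^\dag=f_\C$ (Proposition~\ref{lem:regularization}), neither of which your proposal addresses.
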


\begin{remark}{\rm \ 
\begin{enumerate}
\item 
In Theorem~\ref{thm:main}\ref{thm:maini}-\ref{thm:mainii}  the 
weakest conditions on the regularization parameters 
$(\lambda_n)_{n\in\NN}$ occur when $r=2$.
In the case considered in \ref{thm:mainiii}, the
consistency conditions do not depend on the exponent $r$.
\item 
In the special case when, in \eqref{e:problem1}, 
for every $k\in\KK$, $h_k=\eta \abs{\cdot}^r$, $C_k=\RR$, $D_k=[- \omega_k,\omega_k]$,
for some $\omega_k\in\RPP$, 
we recover the elastic net framework of \cite{Demo09} and the same
consistency conditions as in \cite[Theorem 2 and Theorem 3]{Demo09}.
This special case yields a strongly convex problem. 
In our general setting, the exponent $r$ may take any value in $]1,2]$ 
and the objective function is only totally convex on bounded sets 
(see Lemma~\ref{lem:totconvG}).
Note also that our framework allows for the enforcement of hard
constraints. 
\item
Under the hypotheses of \ref{thm:mainiii}, the consistency
extends to the sequence of coefficients 
$(\widehat{u}_{n,\lambda_n}(Z_n))_{n\in\NN}$.
This is relevant when one requires the estimators to mimick the
properties of $u^\dag$.
\item 
When $\KK$ is finite and, for every $k\in\KK$, $g_k=\abs{\cdot}^r$, 
\cite{Kol2009} provides an excess risk bound depending on the
cardinality of $\KK$ and the level of sparsity of $u^\dag$
(see also \cite{Fu1998}). The case $r=1$ has been considered in 
\cite{DauDefDem04}.
Appendix~\ref{app:proxr} collects useful properties of the proximity
operators of power functions.
\end{enumerate}
}
\end{remark}

We now address the algorithmic aspects.
The objective function in \eqref{eq:learnalgo} consists of a smooth
(quadratic) data fitting term and a separable nondifferentiable
term, penalizing each dictionary coefficient individually.  Thus a
natural choice is to consider the forward-backward splitting
algorithm \cite{Smms05}. We stress that, since
$\varepsilon$-minimizers are employed in \eqref{eq:learnalgo},
algorithms that provide minimizing sequences are necessary.
However, when convergence in objective function values is in order,
the current theory is not completely satisfying. 
Indeed, the available results consider only the error free-case
and the unrelaxed version \cite{Bred09,Davi15}.
In \cite{Schm11}, errors are considered, 
but only ergodic convergence is proved.
In the Theorem~\ref{t:222} below, we fill this gap by proving
an $o(1/m)$ rate of convergence in objective values 
with relaxation and in the presence of the following type of errors. 

\begin{definition} 
\label{d:287s54}
Let $\HH$ be a real Hilbert space,
let $\varphi\in\Gamma_0(\HH)$, let $(u,w)\in\HH^2$,
and let $\delta\in\RP$. The
notation $u\simeq_\delta\prox_{\varphi}w$ means that
\begin{equation}
\varphi(u)+\frac{1}{2} \norm{u-w}_\HH^2 \leq\min_{v
\in\HH} \Big( \varphi(v)+\frac{1}{2} 
\norm{v-w}_\HH^2 \Big)+\frac{\delta^2}{2}.
\end{equation}
\end{definition}

\begin{theorem}
\label{t:222}
Let $\HH$ be a real Hilbert space, let $F\colon\HH\to\RR$ be a
convex function which is differentiable on $\HH$ with a 
$\beta$-Lipschitz continuous gradient for some $\beta\in\RPP$. 
Let $G\in\Gamma_0(\HH)$, set $J=F+G$, and
suppose that $\Argmin J \neq \varnothing$.
Let $(\gamma_m)_{m\in\NN}$ be a sequence in $\RPP$ such that
$0 <\inf_{m\in\NN} \gamma_m\leq\sup_{m\in\NN} \gamma_m<2/\beta$, 
let $(\tau_m)_{m\in\NN}$ be a sequence in $\left]0,1\right]$, such 
that $\inf_{m\in\NN} \tau_m>0$. 
Let $(\delta_m)_{m\in\NN}$ be a summable sequence in $\RP$ and let
$(b_m)_{m\in\NN}$ be a summable sequence in $\HH$.
Fix $u_0\in\HH$ and set
\begin{equation}
\label{e:main2}
\begin{array}{l}
\text{for}\;m=0,1,\ldots\\
\left\lfloor
\begin{array}{l}
v_m\simeq_{\delta_m} \prox_{\gamma_m G}
\big(u_m -\gamma_m(\nabla F(u_m)+b_m)\big)\\
u_{m+1}=u_m+\tau_m(v_m-u_m).
\end{array}
\right.\\
\end{array}
\end{equation}
Then the following hold:
\begin{enumerate}
\item
\label{t:1i-1}
$(u_m)_{m\in\NN}$ converges weakly to a point in $\Argmin J$.
\item
\label{t:1i-1Z}
For every $u\in\Argmin J$, 
$\sum_{m\in\NN}\big\|\nabla F(u_m)-\nabla F(u)\|^2<\pinf$.
\item
\label{t:1i}
$\sum_{m\in\NN}\big\|v_m-u_m\big\|^2<\pinf$. 
\item
\label{t:1ii}
$J(u_m)\to\inf J(\HH)$ and
$\sum_{m\in\NN}|J(v_m)-\inf J(\HH)|^2<\pinf$.
\item 
\label{t:1ibis}
Suppose that $\sum_{m\in\NN}(1-\tau_m)<\pinf$.
Then 
\begin{equation}
\nonumber
\sum_{m\in\NN}{\big(J(v_m)-\inf J(\HH)\big)}<\pinf
\quad\text{and}\quad
\sum_{m\in\NN}\big(J(u_m)-\inf J(\HH)\big)<\pinf.
\end{equation}
\item 
\label{t:1iibis}
Suppose that 
$\sum_{m\in\NN}(1-\tau_m)<\pinf$,
$\sum_{m\in\NN} m \delta_m<\pinf$,
and $\sum_{m\in\NN}m \|b_m\|<\pinf$.
Then $J(u_m)-\inf J(\HH)=o(1/m)$.
\end{enumerate}
\end{theorem}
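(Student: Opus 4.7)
The plan is to derive a single master descent inequality for the inexact relaxed forward--backward step, from which Fej\'er-type estimates (for (i)), summability bounds (for (ii)--(v)), and a weighted telescoping argument (for (vi)) can all be extracted. Setting $w_m = u_m - \gamma_m(\nabla F(u_m)+b_m)$, Definition~\ref{d:287s54} combined with the $1/\gamma_m$-strong convexity of $v\mapsto G(v)+\tfrac{1}{2\gamma_m}\norm{v-w_m}^2$ yields, for every $u\in\HH$,
\begin{equation*}
G(v_m)+\frac{1}{2\gamma_m}\norm{v_m-w_m}^2+\frac{1}{4\gamma_m}\norm{v_m-u}^2 \leq G(u)+\frac{1}{2\gamma_m}\norm{u-w_m}^2+\frac{\delta_m^2}{\gamma_m}.
\end{equation*}
Expanding $\norm{u-w_m}^2$, adding the descent lemma $F(v_m)\leq F(u_m)+\scal{v_m-u_m}{\nabla F(u_m)}+\tfrac{\beta}{2}\norm{v_m-u_m}^2$, and invoking the convexity bound $F(u)\geq F(u_m)+\scal{u-u_m}{\nabla F(u_m)}$, I obtain, under $\inf_m\gamma_m>0$ and $\sup_m\gamma_m<2/\beta$, a master inequality of the form
\begin{equation*}
J(v_m)+\frac{1}{4\gamma_m}\norm{v_m-u}^2+\alpha\norm{v_m-u_m}^2 \leq J(u)+\frac{1}{2\gamma_m}\norm{u_m-u}^2+\eta_m(u),
\end{equation*}
with $\alpha>0$ independent of $m$ and $\eta_m(u)=O(\delta_m^2+\norm{b_m}\,\norm{u-u_m}+\norm{b_m}^2)$ summable along bounded sequences.

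Combining this with $u_{m+1}=u_m+\tau_m(v_m-u_m)$ via convexity of $\norm{\cdot-u}^2$ (and $\inf_m\tau_m>0$) gives quasi-Fej\'er monotonicity of $(\norm{u_m-u^\ast})_{m\in\NN}$ for every $u^\ast\in\Argmin J$, together with the telescoped bound $\sum_m\norm{v_m-u_m}^2<\pinf$, which proves (iii); part (ii) then follows from the $\beta^{-1}$-cocoercivity of $\nabla F$ and $\norm{u_{m+1}-u_m}=\tau_m\norm{v_m-u_m}$. For (i), I would apply Opial's lemma: quasi-Fej\'er monotonicity ensures boundedness and uniqueness of the weak cluster point, while demi-closedness of the forward--backward operator (taken along a subsequence where $\gamma_m$ converges) combined with $v_m-u_m\to 0$, $b_m\to 0$ and $\delta_m\to 0$ identifies each weak cluster point as an element of $\Argmin J$. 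For (iv), plugging $u=u^\ast$ into the master inequality, the surplus $\alpha\norm{v_m-u_m}^2$ together with (iii) yields $\sum_m(J(v_m)-\inf J(\HH))^2<\pinf$, while $J(u_m)\to\inf J(\HH)$ follows from weak lower semicontinuity of $J$ at the weak limit together with $F(u_m)\to F(u^\ast)$ (a consequence of (ii)).

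For (v), under $\sum_m(1-\tau_m)<\pinf$, a careful telescoping of the master inequality at $u=u^\ast$ (together with $\inf_m\tau_m>0$) upgrades the $\ell^2$-bound of (iv) to $\sum_m(J(v_m)-\inf J(\HH))<\pinf$; the convexity bound $J(u_{m+1})\leq\tau_m J(v_m)+(1-\tau_m)J(u_m)$ and boundedness of $(J(u_m)-\inf J(\HH))$ then give $\sum_m(J(u_m)-\inf J(\HH))<\pinf$. For (vi), the plan is to first establish an approximate monotonicity $J(u_{m+1})-\inf J(\HH)\leq J(u_m)-\inf J(\HH)+\xi_m$ with $\sum_m m\xi_m<\pinf$, derived from a refined descent inequality (master inequality at $u=u_m$, Lipschitz gradient control on $J(u_{m+1})-J(v_m)$, and Cauchy--Schwarz on the cross terms $\norm{b_m}\norm{v_m-u_m}$), and then invoke the elementary lemma: if $(a_m)_{m\in\NN}$ in $\RP$ satisfies $\sum_m a_m<\pinf$ and $a_{m+1}\leq a_m+\xi_m$ with $\sum_m m\xi_m<\pinf$, then $m a_m\to 0$. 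The main obstacle is precisely step (vi): the bookkeeping of the three separate error sources (relaxation defect $1-\tau_m$, prox error $\delta_m$, and gradient error $b_m$) must be done finely enough that every cross term contributing to $\xi_m$ remains weighted-summable, which forces one to combine $\sum_m m\delta_m<\pinf$, $\sum_m m\norm{b_m}<\pinf$, and the already-established $\sum_m\norm{v_m-u_m}^2<\pinf$ in a carefully orchestrated way.
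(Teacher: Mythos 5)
Your overall architecture (a one-step descent inequality, telescoping against $\Argmin J$, convexity of $J$ along the relaxation, and the elementary lemma ``$\sum_m a_m<\pinf$, $a_{m+1}\leq a_m+\xi_m$, $\sum_m m\xi_m<\pinf$ $\Rightarrow$ $ma_m\to 0$'' for (vi)) matches the paper's, which handles (i)--(iii) by rewriting the inexact prox as an exact prox plus an error $a_m$ with $\norm{a_m}\leq\delta_m$ and citing the inexact forward--backward theory of Combettes--Wajs, and then establishes (iv)--(vi) from the inequality \eqref{e:20151123a}. However, your master inequality is too weak to deliver (iv) and (v) as stated. The coefficient mismatch $\tfrac{1}{4\gamma_m}\norm{v_m-u}^2$ versus $\tfrac{1}{2\gamma_m}\norm{u_m-u}^2$ (which is indeed what the crude ``$\varepsilon$-minimizer of a $1/\gamma_m$-strongly convex function'' bound gives) leaves a residual $\tfrac{1}{4\gamma_m}\norm{u_m-u^\ast}^2$ on the right-hand side that is merely bounded, not square summable; so plugging in $u=u^\ast$ yields only $J(v_m)-\inf J(\HH)=O(1)$, not $\ell^2$-summability, and the telescoping needed for (v) collapses for the same reason. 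What is needed is the matched-coefficient form
\begin{equation*}
J(v_m)-J(u)\leq\frac{1}{2\gamma_m}\big(\norm{u_m-u}^2-\norm{v_m-u}^2\big)+\frac{1}{2}\Big(\beta-\frac{1}{\gamma_m}\Big)\norm{v_m-u_m}^2+\frac{1}{\gamma_m}\scal{v_m-u}{e_m-\gamma_m b_m}+\frac{\delta_{1,m}^2}{2\gamma_m},
\end{equation*}
with $\norm{e_m}\leq\delta_m$, which the paper obtains by passing through an $\varepsilon$-subdifferential characterization of the inexact prox (Lemma~\ref{l:20151125a} together with \cite[Lemma~1]{SalVil12}); here the difference of squares equals $\scal{u_m-v_m}{u_m+v_m-2u}$ and is genuinely square summable by (iii), and the extra cross term $O(\delta_m\norm{v_m-u})$ is summable. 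Your version trades this cross term for the lost factor of $2$, and that trade is fatal for (iv)--(v).

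The second gap is your route to $J(u_m)\to\inf J(\HH)$ in (iv): weak lower semicontinuity of $J$ at the weak limit only controls $\varliminf J(u_m)$ (which is trivially $\geq\inf J(\HH)$ anyway), and $F(u_m)\to F(u^\ast)$ says nothing about $\varlimsup G(u_m)$, so nothing in your argument rules out $\varlimsup J(u_m)>\inf J(\HH)$. The paper instead takes $u=u_m$ in the displayed inequality above to get $J(v_m)\leq J(u_m)+\epsilon_m$ with $(\epsilon_m)_{m\in\NN}$ summable, deduces $J(u_{m+1})-\inf J(\HH)\leq J(u_m)-\inf J(\HH)+\epsilon_m/\underline{\tau}$ from convexity of $J$ along $u_{m+1}=u_m+\tau_m(v_m-u_m)$, concludes that $J(u_m)-\inf J(\HH)$ converges by the quasi-monotone sequence lemma, and identifies the limit as $0$ using $J(v_m)\to\inf J(\HH)$ and $\inf_{m\in\NN}\tau_m>0$. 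Both gaps are repairable within your framework, but as written the chain from your master inequality to (iv) and (v) does not close; your treatment of (i)--(iii) and (vi) is sound.
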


\begin{remark}
In \cite{Bred09}, the rate $O(1/m)$ for objective values 
is proved in the error-free case and no relaxations 
($\delta_m\equiv 0$ and $\tau_m\equiv 1$), 
assuming that $F+G$ is coercive.
On the other hand, an $o(1/m)$ rate on the objective values 
was derived in
\cite{Davi15} in the special case of fixed proximal parameter
$\gamma\in\left]0,2/\beta\right[$, no relaxation, and no errors.
\end{remark}

We now propose the following inexact forward-backward algorithm 
to solve problem~\ref{e:problem}.

\begin{Algorithm}
\label{mainalgo}
Let $(\gamma_m)_{m\in\NN}$ be a sequence in $\RPP$
such that $0<\inf_{m\in\NN}\gamma_m\leq\sup_{m\in\NN}
\gamma_m<\lambda/\kappa^2$, let $(\tau_m)_{m\in\NN}$ be 
a sequence in $\left]0,1\right]$ such that 
$\inf_{m \in\NN} \tau_m>0$.
Let $(b_m)_{m\in\NN}=((\beta_{m,k})_{k\in\KK})_{m\in\NN}
\in(\ell^2(\KK))^{\NN}$ be such that $\sum_{m\in\NN}\|b_m\|<\pinf$,
let $\zeta\in\RPP$,
let $p \in\left]1,+\infty\right[$, and let
$(\xi_k)_{k \in\KK} \in\ell^1(\KK)$.
Fix $(\mu_{0,k})_{k\in\KK}\in\ell^2(\KK)$ and iterate
\begin{equation}
\label{eq:genaccalgo}
\begin{array}{l}
\text{for}\;m=0,1,\ldots\\
\hspace{-1mm}\left\lfloor
\begin{array}{l}
\text{for every}\;k\in\KK\\
\hspace{-1mm}\vspace{3mm}\left\lfloor
\begin{array}{l}
\chi_{m,k}=
\mu_{m,k}-\displaystyle \frac{\gamma_m}{\lambda}
\bigg(\frac{2}{n}\sum_{i=1}^n\Big(\sum_{j\in\KK}
\mu_{m,j}
\dictfunc_{j}(x_i)-y_i\Big)\dictfunc_k(x_i)+
\beta_{m,k}\bigg)\\[6mm]
\abs{\alpha_{m,k}}\leq \displaystyle\frac{\zeta m^{-2p} \xi_k}{4
\gamma_m \max\{h_k (\abs{\chi_{m,k}}+2), 
h_k(-\abs{\chi_{m,k}}-2))\}
+2\abs{\chi_{m,k}}+1}\\[6mm]
\pi_{m,k}=\prox_{\gamma_m h_k}
\big(\soft{\gamma_m D_k} \chi_{m,k} \big)+ \alpha_{m,k}\\[4mm]
\nu_{m,k}=\proj_{C_k}\big(\sign (\chi_{m,k})
\max\big\{0,\sign(\chi_{m,k}) \pi_{m,k}\big\}\big)\\[4mm]
\mu_{m+1,k}=\mu_{m,k}+\tau_m(\nu_{m,k}-
\mu_{m,k}).\\
\end{array}
\right.\\
\end{array}
\right.\\
\end{array}
\end{equation}
\end{Algorithm}

An attractive feature of Algorithm~\ref{mainalgo} is that, at 
each iteration, each component of the functions in 
\eqref{e:problemX} is activated componentwise and individually.

\begin{remark}
Nesterov-like \cite{Nest13} variants of the forward-backward 
splitting algorithm may also be suitable for computing the 
estimators \eqref{eq:learnalgo} to the extent that they also 
generate minimizing sequences \cite{Schm11,Villa13}. 
\end{remark}

\begin{theorem}
\label{thm:algoconvergence} 
Suppose that Assumption~\ref{ass} is in force. Call
\begin{equation}\label{eq:objfunf}
J\colon \ell^2(\KK) \to \RX\colon u=(\mu_k)_{k\in\KK}
\mapsto \frac 1 n \sum_{i=1}^n 
\abs{f_u (x_i)- y_i}^2+\lambda \sum_{k\in\KK} g_k(\mu_k)
\end{equation}
the objective function in \eqref{eq:learnalgo}, 
and let $(u_m)_{m\in\NN}=((\mu_{m,k})_{k\in\KK})_{m\in\NN}$ and 
$(v_m)_{m\in\NN}=((\nu_{m,k})_{k\in\KK})_{m\in\NN}$
be the sequences generated by 
Algorithm~\ref{mainalgo}. Then the following hold:
\begin{enumerate}
\item \label{thm:algoconvergencei}
$J$ has a unique minimizer $\widehat{u}$, 
and $\widehat{u}\in\ell^r(\KK)$.
\item \label{thm:algoconvergenceii}
$\sum_{m\in\NN}|J(v_m)-\inf J(\HH)|^2<\pinf$,
$J(u_m) \to\inf J(\HH)$, $\norm{v_m-\widehat{u}}_r \to 0$, 
and $\norm{u_m-\widehat{u}}_r \to 0$
as $m \to +\infty$. Moreover
\begin{equation}
\label{eq:20151127c}
\norm{v_m-\widehat{u}}_r=O\Big(\sqrt{ J(v_m)-\inf J(\HH)}\Big) 
\end{equation}
and 
\begin{equation}
\label{eq:ccc}
\norm{u_m-\widehat{u}}_r=O\Big(\sqrt{ J(u_m)-\inf J(\HH)}\Big).
\end{equation}
\item 
\label{thm:algoconvergenceiibis}
Suppose that $\sum_{m\in\NN}(1-\tau_m)<\pinf$.
Then 
\begin{equation}
\nonumber
\sum_{m\in\NN}{\big(J(v_m)-\inf J(\HH)\big)}<\pinf \text{ and }
\sum_{m\in\NN}\big(J(u_m)-\inf J(\HH)\big)<\pinf.
\end{equation}
\item \label{thm:algoconvergenceiii}
Suppose that $p>2$, that 
$\sum_{m\in\NN}(1-\tau_m)<\pinf$ and 
$\sum_{m\in\NN}m \norm{b_m}<\pinf$.
Then 
\begin{equation}
J(u_m)-\inf J(\HH) =o(1/m)\quad\text{and}\quad
\norm{u_m-\widehat{u}}_r=o\big(1/\sqrt{m}\big).
\end{equation}
\end{enumerate}
\end{theorem}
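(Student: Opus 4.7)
The plan is to recognize Algorithm \ref{mainalgo} as an instance of the inexact forward–backward scheme \eqref{e:main2} of Theorem \ref{t:222}, applied to the splitting $J=F+\widetilde G$ with $F(u)=(1/n)\sum_{i=1}^n|f_u(x_i)-y_i|^2$ and $\widetilde G=\lambda G$, and then to upgrade the resulting weak $\ell^2$-conclusions into the strong $\ell^r$-conclusions stated in the theorem. Setting $\widetilde\gamma_m=\gamma_m/\lambda$, the first line of \eqref{eq:genaccalgo} is the forward step $\chi_m=u_m-\widetilde\gamma_m(\nabla F(u_m)+b_m)$; since $F$ is convex quadratic whose Hessian is controlled by \eqref{eq:l2dic}, $\nabla F$ is $\beta$-Lipschitz with $\beta\leq 2\kappa^2$, and the standing hypothesis $\sup_m\gamma_m<\lambda/\kappa^2$ of Algorithm \ref{mainalgo} translates to $\sup_m\widetilde\gamma_m<2/\beta$.

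For the backward half-step, I would use separability of $\widetilde G$: $\prox_{\widetilde\gamma_m\widetilde G}=\prox_{\gamma_m G}$ acts coordinatewise as $\prox_{\gamma_m g_k}$, and the composite proximal thresholder formula of Appendix \ref{app:proxr} gives, for $g_k=\iota_{C_k}+\sigma_{D_k}+h_k$,
\begin{equation*}
\prox_{\gamma_m g_k}(\chi)=\proj_{C_k}\!\big(\sign(\chi)\max\{0,\sign(\chi)\prox_{\gamma_m h_k}(\soft{\gamma_m D_k}\chi)\}\big),
\end{equation*}
so that the rule $\chi_{m,k}\mapsto\pi_{m,k}\mapsto\nu_{m,k}$ in \eqref{eq:genaccalgo} computes $\prox_{\gamma_m g_k}(\chi_{m,k})$ exactly when $\alpha_{m,k}=0$. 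The quantitative step is to propagate the pointwise perturbation $\alpha_{m,k}$ through the $1$-Lipschitz maps $\prox_{\gamma_m h_k}$ and $\proj_{C_k}$ into a bound on the Moreau–Yosida gap $\delta_m$ of Definition \ref{d:287s54}. The explicit denominator in the bound on $|\alpha_{m,k}|$ is tailored so that, after summing over $k\in\KK$, $\delta_m=O(m^{-2p})$: summable when $p>1$ and such that $\sum_m m\delta_m<\pinf$ when $p>2$, matching the hypotheses of the relevant items of Theorem \ref{t:222}.

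Item \ref{thm:algoconvergencei} then follows because $F$ is convex, $\lambda G-\lambda\eta\|\cdot\|_r^r$ is convex on $\ell^r(\KK)$ by Assumption \ref{ass}(c), and $\|\cdot\|_r^r$ is strictly convex on $\ell^r(\KK)$ since $r>1$; together with the coercivity of $G$ from Remark \ref{rmk2}, $J$ is strictly convex and coercive, so it has a unique minimizer $\widehat u\in\dom G\subset\ell^r(\KK)$. The weak-$\ell^2$ convergence of $(u_m)$ and $(v_m)$ to $\widehat u$, the summability $\sum_m|J(v_m)-\inf J|^2<\pinf$, item \ref{thm:algoconvergenceiibis}, and the objective rate $J(u_m)-\inf J=o(1/m)$ in \ref{thm:algoconvergenceiii} then follow directly from the corresponding items of Theorem \ref{t:222}.

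To finish, I would upgrade weak $\ell^2$-convergence to strong $\ell^r$-convergence via the total convexity of $G$ on bounded sets (Lemma \ref{lem:totconvG}). Since $\|\cdot\|_r^r$ is uniformly convex of power type $r$ on $\ell^r(\KK)$ and $G-\eta\|\cdot\|_r^r$ is convex, one obtains on each bounded set $B\subset\ell^r(\KK)$ a constant $c_B>0$ with $G(u)-G(\widehat u)-\langle g,u-\widehat u\rangle\geq c_B\|u-\widehat u\|_r^r$ for every $u\in B$ and $g\in\partial G(\widehat u)$. Combined with the optimality condition $0\in\nabla F(\widehat u)+\lambda\partial G(\widehat u)$ and the convexity of $F$, this yields the quadratic-growth estimate $J(u)-\inf J\geq c_B\lambda\|u-\widehat u\|_r^r$ on bounded sets. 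The lower bound $G\geq\eta\|\cdot\|_r^r$ from Assumption \ref{ass}(c) keeps $(u_m)$ and $(v_m)$ bounded in $\ell^r$, and since $r\in\left]1,2\right]$, the interpolation $\|u-\widehat u\|_r^2=\|u-\widehat u\|_r^{2-r}\|u-\widehat u\|_r^r$ produces \eqref{eq:20151127c} and \eqref{eq:ccc}; combining these with the $o(1/m)$ objective rate delivers $\|u_m-\widehat u\|_r=o(1/\sqrt m)$. The main obstacle is the quantitative bookkeeping of the second paragraph: one must verify that the explicit bound on $|\alpha_{m,k}|$ indeed produces a summable $\delta_m$ after passing through all three nonexpansive operations that build $\nu_{m,k}$ from $\chi_{m,k}$.
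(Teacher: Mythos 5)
Your proposal follows essentially the same route as the paper: Algorithm~\ref{mainalgo} is recast as an instance of Theorem~\ref{t:222} applied to $\widehat{F}_n+\lambda G$ with step sizes $\gamma_m/\lambda$ (the identification of the inner loop as an inexact evaluation $v_m\simeq_{\delta_m}\prox_{\gamma_m G}$ is exactly the content of Proposition~\ref{p:20150912a} together with Lemmas~\ref{l:20150704a}, \ref{lem:compprox}, and \ref{lemma:inexprojprox}), and the $\ell^r$-estimates are then extracted from the total convexity bound of Lemma~\ref{lem:totconvG} precisely as in the paper. Two small corrections: the resulting gap is $\delta_m=O(m^{-p})$, not $O(m^{-2p})$ (the conditions $p>1$ and $p>2$ you state are the ones consistent with $O(m^{-p})$, since $\delta_m$ is the square root of $\sum_k\delta_{m,k}^2$), and the deferred ``bookkeeping'' is not mere $1$-Lipschitz propagation: converting the additive output error $\alpha_{m,k}$ into a gap in the sense of Definition~\ref{d:287s54} costs a square root (whence the denominator in the bound on $\abs{\alpha_{m,k}}$), and composing with the soft-thresholder and with $\proj_{C_k}$ rests on the sign condition $s_k\prox_{\gamma_m\sigma_{D_k}}\chi_{m,k}\geq 0$ and on the monotonicity of the one-dimensional Moreau envelope, not on nonexpansiveness.
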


\begin{remark}
\label{rmk:genaccalgo}\ 
\begin{enumerate}
\item
\label{rmk:genaccalgoi}
In Algorithm~\ref{mainalgo}, the computation of
$\prox_{\gamma_m h_k}$ tolerates an error $\alpha_{m,k}$. This is
necessary since, in general, the proximity operator is not
computable explicitly. In such instances,
$\prox_{\gamma_m h_k}$ must be computed
iteratively and the bound on
$\abs{\alpha_{m,k}}$ in Algorithm~\ref{mainalgo} gives an
explicit stopping rule for the iterations.
\item
The soft-thresholding operator with respect to a bounded interval 
$D_k=[\underline{\omega}_k,\overline{\omega}_k]\subset \RR$ is
\begin{equation}
(\forall\mu\in D_k)\quad \soft{D_k}{\mu}=
\begin{cases}
\mu-\overline{\omega}_k &\text{if } \mu>\overline{\omega}_k\\
0 &\text{if } \mu\in D_k\\
\mu-\underline{\omega}_k &\text{if } \mu<\underline{\omega}_k.
\end{cases}
\end{equation}
The freedom in the choice of the intervals $(D_k)_{k\in\KK}$,
$(C_k)_{k\in\KK}$, and of the exponent $r$ provides flexibility in 
setting the type of thresholding operation. 
It is in particular possible to promote selective sparsity. 
For instance, taking  $0=\underline{\omega}_k<\overline{\omega}_k$ 
only the positive coefficients are thresholded.
Figures~\ref{fig:fixed_results1} and
\ref{fig:fixed_results2} show a few examples.
\end{enumerate}
\end{remark}

\begin{figure}[t]
\begin{center}
\scalebox{0.9} 
{
\begin{pspicture}(-5,-3)(5,4.2) 
\def\a{0.9}
\def\p{((x-1)^2+256/729*\a^3)^(1/2)}
\def\r{((x+1)^2+256/729*\a^3)^(1/2)}
\psline[linewidth=0.04cm,arrowsize=0.2cm,linestyle=solid]{->}%
(-4,0)(4,0)
\psline[linewidth=0.04cm,arrowsize=0.2cm,linestyle=solid]{->}%
(0,-3.5)(0,3.5)
\psplot[plotpoints=800,algebraic,linewidth=0.03cm,linecolor=red]%
{-4.0}{-1.0}{(x+1)/(1+2*\a)}
\psplot[plotpoints=800,algebraic,linewidth=0.04cm,linecolor=olive]%
{-1.0}{1.0}{0}
\psplot[plotpoints=800,algebraic,linewidth=0.03cm,linecolor=red]%
{1.0}{4.0}{(x-1)/(1+2*\a)}
\psplot[plotpoints=800,algebraic,linewidth=0.03cm,linecolor=orange]%
{-4.0}{-1.0}{(x+1)-9/8*(\a^2)*(1-(1-16*(x+1)/(9*(\a^2)))^(1/2))}
\psplot[plotpoints=800,algebraic,linewidth=0.04cm,linecolor=olive]%
{-1.0}{1.0}{0}
\psplot[plotpoints=800,algebraic,linewidth=0.03cm,linecolor=orange]%
{1.0}{4.0}{(x-1)+9/8*(\a^2)*(1-(1+16*(x-1)/(9*(\a^2)))^(1/2))}
\psplot[plotpoints=800,algebraic,linewidth=0.03cm,linecolor=blue]%
{-4.0}{-1.0}{x+1+(4*\a/(3*2^(1/3)))*((\r-x-1)^(1/3)-(\r+x+1)^(1/3))}
\psplot[plotpoints=800,algebraic,linewidth=0.04cm,linecolor=olive]%
{-1.0}{1.0}{0}
\psplot[plotpoints=800,algebraic,linewidth=0.03cm,linecolor=blue]%
{1.0}{4.0}{x-1+(4*\a/(3*2^(1/3)))*((\p-x+1)^(1/3)-(\p+x-1)^(1/3))}
\psplot[plotpoints=800,algebraic,linewidth=0.03cm,linecolor=olive]%
{-4.0}{-1.0}{x+1}
\psplot[plotpoints=800,algebraic,linewidth=0.04cm,linecolor=olive]%
{-1.0}{1.0}{0}
\psplot[plotpoints=800,algebraic,linewidth=0.03cm,linecolor=olive]%
{1.0}{4.0}{x-1}
\rput(4.2,0){\small${\xi}$}
\rput(0,3.8){\small${{\mathrm{prox}}_g\,\xi}$}
\rput(1.0,-0.4){\small${1}$}
\rput(2.0,-0.0){\small${|}$}
\rput(-2.0,-0.0){\small${|}$}
\rput(-1.0,-0.0){\small${|}$}
\rput(1.0,-0.0){\small${|}$}
\rput(0.0,1.0){\small${-}$}
\rput(0.3,1.0){\small${1}$}
\rput(-0.4,-1.0){\small${-1}$}
\rput(0.0,-1.0){\small${-}$}
\rput(-2.0,0.4){\small${-2}$}
\rput(2.0,-0.4){\small${2}$}
\rput(-1.0,0.4){\small${-1}$}
\end{pspicture} 
}
\end{center}
\caption{Soft thresholding (green) and $\prox_g$ for
$g=\abs{\cdot}+0.9\abs{\cdot}^r$, with $r=2$ (red), 
$r=3/2$ (orange), $r=4/3$ (blue).}
\label{fig:fixed_results1}
\end{figure}
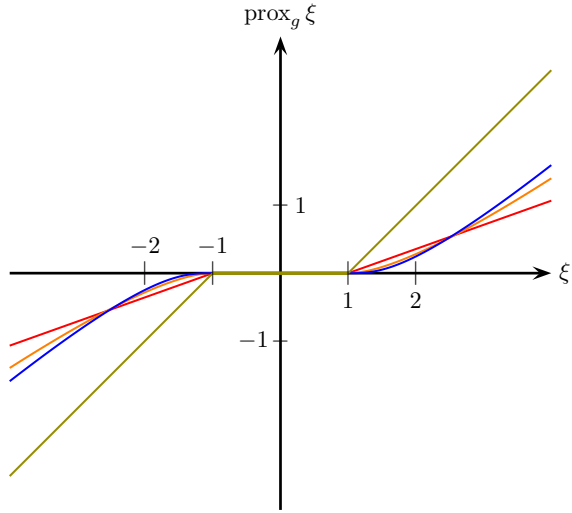

\begin{figure}[t]
\hspace{-2.5cm}
\begin{center}
\scalebox{0.9} 
{
\begin{pspicture}(-4,-2.5)(7,3) 
\def\a{0.9}
\def\r{(x^2+256/729*\a^3)^(1/2)}
\def\p{((x-2)^2+256/729*\a^3)^(1/2)}
\psline[linewidth=0.04cm,arrowsize=0.2cm,linestyle=solid]{->}%
(-3.5,0)(5.5,0)
\psline[linewidth=0.04cm,arrowsize=0.2cm,linestyle=solid]{->}%
(0,-2.5)(0,2.5)
\psplot[plotpoints=800,algebraic,linewidth=0.03cm,linecolor=blue]%
{-3.5}{0.0}{x+(4*\a/(3*2^(1/3)))*((\r-x)^(1/3)-(\r+x)^(1/3))}
\psplot[plotpoints=800,algebraic,linewidth=0.03cm,linecolor=blue]%
{0.0}{2.0}{0}
\psplot[plotpoints=800,algebraic,linewidth=0.03cm,linecolor=blue]%
{2.0}{4.474}{x-2+(4*\a/(3*2^(1/3)))*((\p-x+2)^(1/3)-(\p+x-2)^(1/3))}
\psplot[plotpoints=800,algebraic,linewidth=0.03cm,linecolor=blue]%
{4.471}{5.5}{1.2}

\rput(5.7,0){\small${\xi}$}
\rput(0,2.8){\small${{\mathrm{prox}}_g\,\xi}$}
\rput(2.0,-0.0){\small${|}$}
\rput(4.0,-0.0){\small${|}$}
\rput(-2.0,-0.0){\small${|}$}
\rput(0.0,1.2){\small${-}$}
\rput(0.4,1.2){\small${6/5}$}
\rput(-0.4,-1.0){\small${-1}$}
\rput(0.0,-1.0){\small${-}$}
\rput(-2.0,0.4){\small${-2}$}
\rput(2.0,-0.4){\small${2}$}
\rput(4.0,-0.4){\small${4}$}
\end{pspicture} 
}
\end{center}
\caption{$\prox_g$ for
$g=\iota_{\left]-\infty, 6/5\right]}+\sigma_{[0,2]}+
0.9 \abs{\cdot}^{4/3}$.}
\label{fig:fixed_results2}
\end{figure}
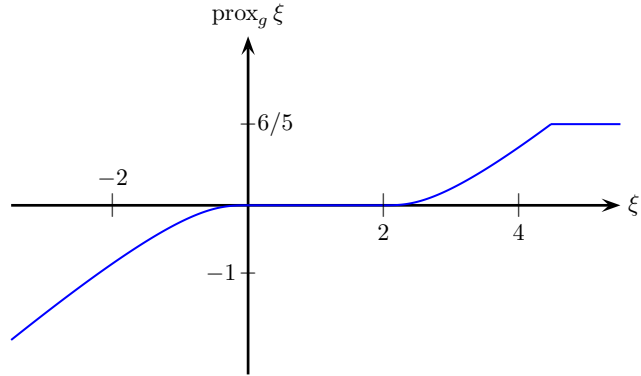

\section{Statistical modeling and analysis}
\label{sec:stat}
Throughout the section Assumption~\ref{ass} is made. 
Our main objective is to prove Theorem~\ref{thm:main}.

The following result establishes that $G$ is totally
convex on bounded sets in $\ell^r(\KK)$ 
and gives an explicit lower bound for the relative 
modulus of total convexity.

\begin{lemma}
\label{lem:totconvG}
Suppose that Assumption~\ref{ass} is in force.
Let $\rho\in\RPP$, let $u_0\in\ell^r(\KK)$ be such that
$\norm{u_0}_r\leq\rho$, let $u_0^*\in\partial G(u_0)$,
and set $M=(7/32)r(r-1)(1- (2/3)^{r-1})$. Then
\begin{equation}
(\forall\, u\in\ell^r(\KK))\quad G(u)-G(u_0) \geq 
\scal{u-u_0}{u_0^*}+ 
\frac{\eta M\norm{u-u_0}_r^2}{(\rho+\norm{u-u_0}_r)^{2-r}}.
\end{equation}
\end{lemma}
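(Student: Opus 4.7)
The plan is to reduce the inequality to a one-dimensional Bregman-type estimate for the function $t \mapsto |t|^r$ and then lift it to the $\ell^r$ norm via Hölder's inequality and the triangle inequality.

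For the reduction, note that $g_k - \eta|\cdot|^r \in \Gamma_0^+(\RR)$ by Assumption~\ref{ass}(c) and that $\eta|\cdot|^r$ is differentiable on $\RR$ with derivative $\eta r\sign(t)|t|^{r-1}$ (this vanishes at $t=0$ since $r>1$); hence the subdifferential sum rule gives $\partial g_k(\mu_{0,k}) = \eta r\sign(\mu_{0,k})|\mu_{0,k}|^{r-1} + \partial(g_k-\eta|\cdot|^r)(\mu_{0,k})$. Since $G$ is separable, $u_0^* = (\mu_{0,k}^*)_{k\in\KK}$ with $\mu_{0,k}^* \in \partial g_k(\mu_{0,k})$; applying the subgradient inequality for $g_k-\eta|\cdot|^r$ componentwise and summing yields
\[
G(u)-G(u_0) - \langle u - u_0, u_0^*\rangle \;\geq\; \eta\sum_{k\in\KK} B(\mu_k,\mu_{0,k}),
\]
where $B(s,t) = |s|^r - |t|^r - r\sign(t)|t|^{r-1}(s-t) \geq 0$ is the Bregman divergence associated with $|\cdot|^r$.

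The main technical obstacle is the following one-dimensional estimate: for every $(s,t)\in\RR^2\setminus\{(0,0)\}$,
\[
B(s,t) \;\geq\; 2^{2-r} M \cdot \frac{(s-t)^2}{(|s|+|t|)^{2-r}}.
\]
By symmetry $|\cdot|^r$ and $r$-homogeneity one may normalize to $t=1$ (the degenerate case $t=0$ reduces to $|s|^r \geq c_r|s|^r$). When $\sign(s) = \sign(t)$, Taylor's theorem in integral form combined with the expression $r(r-1)|u|^{r-2}$ for the second derivative of $|\cdot|^r$ and the fact that $|\cdot|^{r-2}$ is nonincreasing on $\RPP$ (since $r\leq 2$) produces a lower bound of order $r(r-1)(\max(|s|,|t|))^{r-2}(s-t)^2$; the precise value of $M = (7/32)r(r-1)(1-(2/3)^{r-1})$ should emerge from a quantitative split of this regime at the threshold $|s-t|/\max(|s|,|t|) = 2/3$, with the factor $1 - (2/3)^{r-1}$ capturing the residual convexity across the switch. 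When $\sign(s)\neq\sign(t)$, a direct algebraic computation reduces the claim to showing $\lambda^r + r\lambda(1-\lambda)^{r-1} + (r-1)(1-\lambda)^r \geq 2^{2-r}M$ for $\lambda = |s|/(|s|+|t|) \in [0,1]$; the left-hand side has infimum $r-1$ attained at $\lambda=0$, which dominates the required constant.

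To lift the one-dimensional bound to $\ell^r$, set $\Delta_k = \mu_k - \mu_{0,k}$ and apply Hölder's inequality with conjugate exponents $2/r$ and $2/(2-r)$ (both $\geq 1$) to
\[
\sum_{k\in\KK}|\Delta_k|^r = \sum_{k\in\KK}\frac{|\Delta_k|^r}{(|\mu_k|+|\mu_{0,k}|)^{r(2-r)/2}}\,(|\mu_k|+|\mu_{0,k}|)^{r(2-r)/2},
\]
which, combined with the triangle inequality $\big(\sum_k(|\mu_k|+|\mu_{0,k}|)^r\big)^{1/r} \leq \|u\|_r+\|u_0\|_r$, yields
\[
\sum_{k\in\KK}\frac{|\Delta_k|^2}{(|\mu_k|+|\mu_{0,k}|)^{2-r}} \;\geq\; \frac{\|u-u_0\|_r^2}{(\|u\|_r+\|u_0\|_r)^{2-r}}.
\]
Finally, the triangle inequality and $\|u_0\|_r\leq\rho$ give $\|u\|_r+\|u_0\|_r \leq 2\rho + \|u-u_0\|_r \leq 2(\rho+\|u-u_0\|_r)$, which absorbs the factor $2^{2-r}$ from the one-dimensional estimate and produces the claimed inequality. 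The hard part is the one-dimensional estimate in the second paragraph; the rest is essentially standard convex-analytic bookkeeping together with a Hölder argument.
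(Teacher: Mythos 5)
Your architecture is sound and genuinely different from the paper's proof. The paper restricts $G$ to $\ell^r(\KK)$, invokes the abstract modulus of total convexity, uses the decomposition $G_|=H+\eta\norm{\cdot}_r^r$ with $H$ convex (Lemma~\ref{l:20151126a}), and then \emph{cites} an external result for the explicit lower bound on the modulus of total convexity of $\norm{\cdot}_r^r$ in $\ell^r(\KK)$ --- which is where the constant $M$ actually comes from. Your componentwise Bregman reduction and the lifting step are correct and self-contained: the extraction $\mu_{0,k}^*\in\partial g_k(\mu_{0,k})$ from $u_0^*\in\partial G(u_0)$ is valid for a separable sum, the subdifferential sum rule applies because $\eta\abs{\cdot}^r$ is finite and differentiable, the H\"older step with exponents $2/r$ and $2/(2-r)$ is exactly right, and the absorption of $2^{2-r}$ via $\norm{u}_r+\norm{u_0}_r\leq 2(\rho+\norm{u-u_0}_r)$ closes the argument. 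This is essentially a direct proof of the cited total-convexity estimate, which is a worthwhile gain in self-containedness.

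The gap is precisely where you flag it: the one-dimensional inequality $B(s,t)\geq 2^{2-r}M(s-t)^2/(\abs{s}+\abs{t})^{2-r}$ is asserted, not proven. Writing that the value of $M$ ``should emerge from a quantitative split at the threshold $\abs{s-t}/\max(\abs{s},\abs{t})=2/3$'' is a conjecture about the provenance of the constant, not a derivation; since the explicit value of $M$ is the entire content of the lemma (it is used quantitatively in Theorem~\ref{thm:stability2} and in the proof of Theorem~\ref{thm:main}), the proof is incomplete as written. The inequality is in fact true, and more simply than your sketch suggests: for $s,t$ of the same sign, the integral remainder $B(s,t)=r(r-1)\int(s-u)\abs{u}^{r-2}\,\ud u$ and the monotonicity of $u\mapsto u^{r-2}$ on $\RPP$ give $B(s,t)\geq\tfrac{1}{2}r(r-1)\max(\abs{s},\abs{t})^{r-2}(s-t)^2\geq\tfrac{1}{2}r(r-1)(\abs{s}+\abs{t})^{r-2}(s-t)^2$, with no case split at $2/3$ needed; for opposite signs your reduction to $f(\lambda)=\lambda^r+r\lambda(1-\lambda)^{r-1}+(r-1)(1-\lambda)^r$ is correct, but the claim $\inf f=r-1$ is itself left unproved. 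Since $2^{2-r}M\leq\tfrac{7}{16}r(r-1)<\tfrac{1}{2}r(r-1)\leq r-1$, both cases would then close with room to spare. Until these two elementary computations are actually carried out, however, the central inequality rests on an unverified claim, so you should either supply them or do what the paper does and cite a reference for the total convexity of $\norm{\cdot}_r^r$ with this constant.
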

\begin{proof}
Let $G_|$ be the restriction of $G$ to $\ell^r(\KK)$, endowed
with the norm $\norm{\cdot}_r$. Since $u_0\in\ell^r(\KK)$ and
$u_0^*\in\ell^{r^*}(\KK)$, we have that $u_0^*\in\partial
G_|(u_0)$. Let $\psi$ be the modulus of total convexity of
$G_|$ and let $\varphi$ be the modulus of total convexity of
$\norm{\cdot}_r^r$ in $\ell^r(\KK)$. Then, for every
$u\in\ell^r(\KK)$, $G(u)-G(u_0) \geq \pair{u-u_0}{u_0^*}+
\psi(u_0;\norm{u-u_0}_r)$.  Moreover, since $G_|=H+\eta
\norm{\cdot}^r_r$, with $H\in\Gamma_0(\ell^r(\KK))$ (see
Lemma~\ref{l:20151126a}), we have $\psi \geq \eta \varphi$. 
The statement follows from 
\cite[Proposition~A.9-Remark~A.10]{arxiv14}.
\end{proof}

The next proposition revisits some results of \cite{Atto96}
about Tikhonov-like regularization specialized to our setting.

\begin{proposition}\label{lem:regularization}
Suppose that Assumption~\ref{ass} is in force.
For every $(\lambda,\epsilon)\in\RPP^2$, 
let $u_{\lambda, \epsilon}$ be an 
$\varepsilon$-minimizer of $F+\lambda G$ and let $u_G$ be the minimizer of $G$.
Then the following hold:
\begin{enumerate}
\item\label{lem:regularization0}
$\inf R(\C)=\inf F(\dom G)$.
\item\label{lem:regularization00}
$(\forall (\lambda,\epsilon) \in\RPP^2)$, 
$\norm{u_{\lambda,\epsilon}-u_G}_r\leq \max\big\{ \norm{u_G}_r,  
\big( 2(F(u_G)+\epsilon)/(\eta M\lambda) \big)^{1/r} \big\}$.
\item
\label{lem:regularizationi}
$F(u_{\lambda,\epsilon}) \to \inf F(\dom G)$ as
$(\lambda,\epsilon) \to (0^+,0^+)$.
\item
\label{lem:regularizationii}
Suppose that 
$S=\Argmin_{\dom G} F\neq\varnothing$ and let
$\varepsilon\colon\RPP \to [0,1]$ be such that 
$\varepsilon(\lambda)/\lambda \to 0^+$ as $\lambda \to 0$.
Then there exists $u^\dag\in\ell^r(\KK)$ such that 
$S=\{u^\dag\}$ and $u_{\lambda, \varepsilon(\lambda)}\to u^\dag$ as
$\lambda\to 0^+$. 
\end{enumerate}
\end{proposition}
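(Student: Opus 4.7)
The plan is to establish the four items in order, following the Tikhonov regularization machinery of \cite{Atto96} in our specialized setting, with the total convexity estimate of Lemma~\ref{lem:totconvG} as the quantitative workhorse. For (i), I would observe that $g_k(0)=0$ for every $k\in\KK$ (since $0\in C_k$, $\sigma_{D_k}(0)=0$, and $h_k(0)=0$). The inclusion $\dom G\subset\ell^2(\KK)\cap\cart_{k\in\KK}C_k$ gives $A(\dom G)\subset\C$, hence $\inf R(\C)\leq\inf F(\dom G)$. The reverse inequality is obtained by approximating any $u\in\ell^2(\KK)\cap\cart_{k\in\KK}C_k$ by scaled finitely supported truncations lying in $\dom G$ (using continuity of $h_k$ at $0$ from within its domain); boundedness of $A\colon\ell^2(\KK)\to L^2(\PP_{\XC})$ and continuity of $R$ on $L^2(\PP_{\XC})$ then propagate this to the infima.

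For (ii), since $u_G$ minimizes $G$, $0\in\partial G(u_G)$; combining $F\geq 0$ with the $\epsilon$-minimality of $u_{\lambda,\epsilon}$ gives $\lambda(G(u_{\lambda,\epsilon})-G(u_G))\leq F(u_G)+\epsilon$. Applying Lemma~\ref{lem:totconvG} at $u_0=u_G$ with $\rho=\norm{u_G}_r$ produces $\eta M\lambda t^2\leq(F(u_G)+\epsilon)(\rho+t)^{2-r}$ for $t=\norm{u_{\lambda,\epsilon}-u_G}_r$. A case split on $t\leq\rho$ versus $t>\rho$ (with $(\rho+t)^{2-r}\leq(2t)^{2-r}\leq 2\,t^{2-r}$ in the latter case, since $r\in\left]1,2\right]$) yields the stated maximum bound. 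For (iii), every $\epsilon$-minimizer of $F+\lambda G$ automatically belongs to $\dom G$ (the cost is finite there), so $F(u_{\lambda,\epsilon})\geq\inf F(\dom G)$; conversely, for any $u\in\dom G$ the $\epsilon$-minimality together with $G(u_{\lambda,\epsilon})\geq G(u_G)$ yields $F(u_{\lambda,\epsilon})\leq F(u)+\lambda(G(u)-G(u_G))+\epsilon$, whence $\limsup_{(\lambda,\epsilon)\to(0^+,0^+)}F(u_{\lambda,\epsilon})\leq F(u)$; taking the infimum over $u\in\dom G$ closes the bracket.

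The delicate part is (iv), the hierarchical Tikhonov selection. Strict convexity of $G$ on $\ell^r(\KK)$ (from Lemma~\ref{lem:totconvG}) combined with $S$ being nonempty, closed, and convex secures existence and uniqueness of $u^\dag=\argmin_S G$. Testing the $\varepsilon(\lambda)$-minimality inequality against any $u\in S$ and using $F(u_{\lambda,\varepsilon(\lambda)})\geq F(u)$ gives $G(u_{\lambda,\varepsilon(\lambda)})\leq G(u)+\varepsilon(\lambda)/\lambda$; since $\varepsilon(\lambda)/\lambda\to 0^+$, this yields $\limsup_{\lambda\to 0^+}G(u_{\lambda,\varepsilon(\lambda)})\leq G(u^\dag)$, and coercivity of $G$ on $\ell^r(\KK)$ (Remark~\ref{rmk2}\ref{rmk2ii}) then bounds the family $(u_{\lambda,\varepsilon(\lambda)})$ in $\ell^r(\KK)$. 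Reflexivity of $\ell^r(\KK)$ together with the weak lower semicontinuities of $F$ and $G$ force every weak cluster point $u^*$ to satisfy $F(u^*)\leq\inf F(\dom G)$ (by (iii)) and $G(u^*)\leq G(u^\dag)$; hence $u^*=u^\dag$, and the whole family converges weakly in $\ell^r(\KK)$. Finally, strong convergence is extracted by a Kadec--Klee-type argument: decomposing $G=\eta\norm{\cdot}_r^r+H$ with $H\in\Gamma_0^+(\ell^r(\KK))$ (a consequence of Assumption~\ref{ass}), both summands are weakly lower semicontinuous, so the identity $G(u_{\lambda,\varepsilon(\lambda)})\to G(u^\dag)$ (from the sandwich combined with weak lower semicontinuity) forces $\norm{u_{\lambda,\varepsilon(\lambda)}}_r\to\norm{u^\dag}_r$; uniform convexity of $\ell^r(\KK)$ for $r\in\left]1,2\right]$ then upgrades weak to strong convergence.
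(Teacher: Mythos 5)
Your treatment of items (i)--(iii) coincides with the paper's: the same finitely supported truncation argument for the identification $\inf R(\C)=\inf F(\dom G)$ (your scaling refinement to stay inside $\dom h_k$ is a slightly more careful version of the same idea), the same application of Lemma~\ref{lem:totconvG} at $u_G$ with the case split on $\norm{u_{\lambda,\epsilon}-u_G}_r\gtrless\norm{u_G}_r$, and the same sandwich for $F(u_{\lambda,\epsilon})$. Where you genuinely diverge is the strong convergence in (iv). The paper stays entirely within the total-convexity framework: after bounding $(G(u_{\lambda,\varepsilon(\lambda)}))_\lambda$ it invokes Z\u{a}linescu's Proposition~3.6.5 to produce a modulus $\phi$ with $\phi\big(\tfrac12\norm{u_{\lambda,\varepsilon(\lambda)}-u^\dag}\big)\leq\tfrac12\big(G(u^\dag)+G(u_{\lambda,\varepsilon(\lambda)})\big)-G\big(\tfrac12(u_{\lambda,\varepsilon(\lambda)}+u^\dag)\big)$ and concludes by the argument of \cite[Proof of Proposition~3.1(vi)]{Sico00}, with no appeal to weak topologies. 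You instead pass through weak compactness (reflexivity of $\ell^r$), identify every weak cluster point as $u^\dag$ via weak lower semicontinuity of $F$ and $G$ and the minimality of $G$ over $S$, then upgrade to strong convergence by a Radon--Riesz/Kadec--Klee argument using the splitting $G=\eta'\norm{\cdot}_r^r+H$ and uniform convexity of $\ell^r$. Both routes are valid. The paper's is uniform with its quantitative toolkit (the same total-convexity modulus that drives Lemma~\ref{lem:totconvG} and the stability bounds) and avoids any geometric property of the ambient norm; yours is more self-contained (no external modulus-of-convexity machinery) but is purely qualitative and leans on the specific additive structure of $G$ and on uniform convexity of $\ell^r$, $r\in\left]1,2\right]$. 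Two small points to tighten: by Lemma~\ref{l:20151126a}\ref{l:20151126aiv} the splitting is only guaranteed with $\eta'\in\left]0,\eta\right[$ rather than $\eta$ itself (immaterial for your argument), and $S$ need not be closed since $\dom G$ need not be; existence of $\argmin_S G$ should be obtained by minimizing the coercive function $G+\iota_{\lev{c}F}$ with $c=\inf F(\dom G)$, which is in effect what the paper's citation of \cite[Corollary~11.15(ii)]{Livre1} accomplishes.
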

\begin{proof}
We first note that it follows from Remark~\ref{rmk2}\ref{rmk2ii}
that $G$ has a minimizer.

\ref{lem:regularization0}: 
Let $u=(\mu_k)_{k\in\KK}\in\ell^2(\KK)\cap  \cart_{k\in\KK} C_k$ 
and take $\delta\in\RPP$.
Then there exists a finite set $\KK_1 \subset \KK$ such that 
$\sum_{k\in\KK\!\smallsetminus\!\KK_1} \abs{\mu_k}^2<\delta^2$.
Now let $v =(\nu_k)_{k\in\KK}$ be such that, for every $k\in\KK_1$,
$\nu_k=\mu_k$
and, for every $k\in\KK\!\smallsetminus\!\KK_1$, $\nu_k=0$. We have
$v\in\dom G$ and $\norm{A u-A v}_{L^2}<\norm{A} \delta$. Thus, 
$\C=\overline{A(\dom G)}$ and the statement follows.

\ref{lem:regularization00}:
Let $(\lambda,\varepsilon) \in\RPP^2$. We derive from the
definition of $u_{\lambda,\varepsilon}$, that
$F(u_{\lambda,\varepsilon})+\lambda G(u_{\lambda,\varepsilon}) 
\leq F(u_G)+\lambda G(u_G)+\varepsilon$
hence, since $0 \in\partial G(u_G)$, it follows from
Lemma~\ref{lem:totconvG} that
\begin{equation}
\frac{\eta M \norm{u_{\lambda,\epsilon}-u_G}_r^2}
{\big(\norm{u_G}_r+\norm{u_{\lambda,\epsilon}-u_G}_r\big)^{2-r}} 
\leq G(u_{\lambda,\epsilon})-G(u_G)\leq
\frac{F(u_G)+\epsilon}{\lambda}.
\end{equation}
If $\norm{u_{\lambda,\epsilon}-u_G}_r \geq \norm{u_G}_r$, then
\begin{equation}
\frac{\eta M \norm{u_{\lambda,\epsilon}-u_G}_r^2}
{\big(\norm{u_G}_r+\norm{u_{\lambda,\epsilon}-u_G}_r\big)^{2-r}} 
\geq  \frac{ \eta M\norm{u_{\lambda,\epsilon}-u_G}_r^2}
{\big(2 \norm{u_{\lambda,\epsilon}-u_G}_r\big)^{2-r}} 
\geq \frac{\eta M }{2} \norm{u_{\lambda,\epsilon}-u_G}_r^r
\end{equation}
and hence $\norm{u_{\lambda,\epsilon}-u_G}^r_r 
\leq 2 (F(u_G) +\epsilon)/ (\eta M \lambda)$.

\ref{lem:regularizationi}: Let $u \in\dom G$.
Then, for every $(\lambda,\epsilon)\in\RPP^2$,
\begin{align}
\nonumber\inf F(\dom G)&\leq F(u_{\lambda,\epsilon})\\
\nonumber&\leq F(u_{\lambda,\epsilon})+\lambda 
\big(G(u_{\lambda,\epsilon})-G(u_G)\big)\\
&\leq F(u)+\lambda \big( G(u)-G(u_G)\big)+\epsilon.
\end{align}
Hence 
\begin{align}
\nonumber\inf F(\dom G) 
&\leq \varliminf_{(\lambda,\epsilon) \to (0,0)}
F(u_{\lambda,\epsilon}) \\
\nonumber&\leq \varlimsup_{(\lambda,\epsilon) \to (0,0)}
F(u_{\lambda,\epsilon})\\
&\leq \varlimsup_{(\lambda,\epsilon) \to (0,0)}\big( F(u)
+\lambda \big( G(u)-G(u_G)\big)+\epsilon\big)\nonumber\\
&\leq F(u).
\end{align}
Since $u$ is arbitrary, the statement follows.

\ref{lem:regularizationii}: 
Since $S$ is convex and $G \in\Gamma_0(\ell^2(\KK))$ is strictly
convex, coercive, and $\dom G \subset \ell^r(\KK)$, it follows 
from \cite[Corollary~11.15(ii)]{Livre1} that there exists 
$u^\dag\in\ell^r(\KK)$ such that $S=\{u^\dag\}$. Moreover, 
\begin{equation}
G(u_{\lambda,\epsilon(\lambda)})
\leq (F(u^\dag)-F(u_{\lambda,\epsilon(\lambda)})
+\epsilon(\lambda))/\lambda+G(u^\dag)
\leq G(u^\dag)+\epsilon(\lambda)/\lambda,
\end{equation}
which implies that
$(G(u_{\lambda,\epsilon(\lambda)}))_{\lambda\in\RPP}$ is bounded. 
Since $G$ is coercive, the family
$(u_{\lambda, \varepsilon(\lambda)})_{\lambda\in\RPP}$ is bounded 
as well. We deduce from 
\cite[Proposition~3.6.5]{Zalinescu02} (see also \cite{ButIusZal03}) 
that there exists an increasing function $\phi\colon\RPP\to\RP$
such that $\phi(0)=0$, for every $t\in\RPP$, $\phi(t)>0$, and
\begin{equation}
(\forall\lambda\in\RPP)\quad\phi
\Big(\frac{\norm{u_{\lambda,\varepsilon(\lambda)}-u^\dag}}{2}\Big)\leq
\frac{G(u^\dag)+G(u_{\lambda,\varepsilon(\lambda)})}{2}-G\Big(
\frac{u_{\lambda,\varepsilon(\lambda)}+u^\dag}{2}\Big).
\end{equation}
Hence, arguing as in \cite[Proof of Proposition~3.1(vi)]{Sico00}, 
we obtain $u_\lambda\to u^\dag$ as $\lambda\to 0^+$.
\end{proof}

Next, we give a representer and stability theorem which generalizes
existing results \cite{DeVito2004,SchHer01} 
to our class of regularization functions.

\begin{theorem}
\label{thm:stability2}
Suppose that Assumption~\ref{ass} is in force. 
Set $M=(7/32)r(r-1)(1- (2/3)^{r-1})$,
let $\lambda\in\RR_{++}$, and let 
$u_\lambda\in\ell^r(\KK)$ be the minimizer of $F+\lambda G$.
Then the following hold:
\begin{enumerate}
\item
\label{thm:stability2i} 
The function
\begin{equation}
\label{eq:Philambda}
\Psi_\lambda\colon \XC\times\YC \to \ell^2(\KK)\colon (x,y) \mapsto
2(f_{u_\lambda}(x)-y) \fmap(x)
\end{equation}
is bounded and $\norm{\Psi_\lambda}_\infty\leq2 \kappa(\kappa
\norm{u_\lambda}_2+b)$.
Moreover
$\norm{\Psi_\lambda}_2\leq2 \kappa \sqrt{R(f_{u_\lambda})}$ and
$-\EE_\PP(\Psi_\lambda) \in\lambda \partial G(u_\lambda)$. 
\item 
\label{thm:stability2ii} 
Let $n \in\NN^*$. Then there exists $\widehat{v}\in\ell^r(\KK)$
such that $\|\widehat{v}
-\widehat{u}_{n,\lambda}(z_n)\|_r\leq \sqrt{\varepsilon(\lambda)}$
\begin{equation}
\frac{\eta M\norm{\widehat{v}-u_\lambda}_r}{(\norm{u_\lambda}_r 
+\norm{\widehat{v}-u_\lambda}_r)^{2-r}}\leq\frac{1}{\lambda}  
\Big(\Big\Vert \frac 1 n \sum_{i=1}^n 
\Psi_\lambda(x_i,y_i)-\EE_{\QQ} (\Psi_{\lambda}) \Big\Vert_{2}
+\sqrt{\varepsilon(\lambda)}\Big).
\end{equation}
\end{enumerate}
\end{theorem}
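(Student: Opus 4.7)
I would treat the two parts separately. Part~(i) is essentially a direct computation. The boundedness estimate for $\Psi_\lambda$ follows at once from $\|\fmap(x)\|_2\leq\kappa$ (by \eqref{eq:l2dic}) and the Cauchy--Schwarz bound $|f_{u_\lambda}(x)|=|\pair{\fmap(x)}{u_\lambda}|\leq\kappa\|u_\lambda\|_2$, hence $\|\Psi_\lambda(x,y)\|_2\leq 2\kappa(\kappa\|u_\lambda\|_2+b)$; the $L^2(\PP;\ell^2(\KK))$ bound follows by integrating $\|\Psi_\lambda(x,y)\|_2^2\leq 4\kappa^2(f_{u_\lambda}(x)-y)^2$ against $\PP$. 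For the subgradient inclusion, I would compute $\nabla F(u_\lambda)=\EE_\PP(\Psi_\lambda)$ by differentiating under the integral (justified by the uniform bound just established) and then invoke Fermat's rule $0\in\nabla F(u_\lambda)+\lambda\partial G(u_\lambda)$ for the (unique) minimizer of $F+\lambda G$.

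Part~(ii) is the main content. The plan is to apply the Br\o ndsted--Rockafellar theorem in the reflexive Banach space $\ell^r(\KK)$. The restricted empirical objective $\widehat{J}_n|_{\ell^r}=\widehat{F}_n|_{\ell^r}+\lambda G|_{\ell^r}$ lies in $\Gamma_0(\ell^r(\KK))$, since $G|_{\ell^r}\in\Gamma_0(\ell^r(\KK))$ by Lemma~\ref{l:20151126a} and $\widehat{F}_n$ is a continuous finite-valued convex function on $\ell^r(\KK)$; since $\widehat{u}_{n,\lambda}(z_n)$ lies in $\dom G\subset\ell^r(\KK)$ and is an $\varepsilon(\lambda)$-minimizer, Br\o ndsted--Rockafellar produces $\widehat{v}\in\ell^r(\KK)$ with $\|\widehat{v}-\widehat{u}_{n,\lambda}(z_n)\|_r\leq\sqrt{\varepsilon(\lambda)}$ together with some $\widehat{v}^{*}\in\partial_{\ell^r}\widehat{J}_n(\widehat{v})$ satisfying $\|\widehat{v}^{*}\|_{r^{*}}\leq\sqrt{\varepsilon(\lambda)}$. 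The sum rule gives $\widehat{v}^{*}=\nabla\widehat{F}_n(\widehat{v})+\lambda\widehat{G}^{*}$ with $\widehat{G}^{*}\in\partial G(\widehat{v})$. Setting $u_\lambda^{*}=-\EE_\PP(\Psi_\lambda)/\lambda\in\partial G(u_\lambda)$ from part~(i), Lemma~\ref{lem:totconvG} applied at $u_\lambda$ (with $\rho=\|u_\lambda\|_r$) combined with the subgradient inequality for $G$ at $\widehat{v}$ yields the lower bound
\begin{equation*}
\pair{\widehat{v}-u_\lambda}{\widehat{G}^{*}-u_\lambda^{*}}\geq\frac{\eta M\|\widehat{v}-u_\lambda\|_r^2}{(\|u_\lambda\|_r+\|\widehat{v}-u_\lambda\|_r)^{2-r}}.
\end{equation*}

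It remains to upper-bound the left-hand side in the announced form. Substituting the decomposition of $\widehat{v}^{*}$ gives two contributions. The first, $\pair{\widehat{v}-u_\lambda}{\widehat{v}^{*}}/\lambda$, is controlled by $\sqrt{\varepsilon(\lambda)}\|\widehat{v}-u_\lambda\|_r/\lambda$ via H\"older's inequality. The second, $\pair{\widehat{v}-u_\lambda}{\EE_\PP(\Psi_\lambda)-\nabla\widehat{F}_n(\widehat{v})}/\lambda$, is the main obstacle: the gradient $\nabla\widehat{F}_n(\widehat{v})$ is naturally evaluated at $\widehat{v}$, whereas the theorem requires $(1/n)\sum_i\Psi_\lambda(x_i,y_i)=\nabla\widehat{F}_n(u_\lambda)$. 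The key trick is monotonicity of $\nabla\widehat{F}_n$, namely $\pair{\widehat{v}-u_\lambda}{\nabla\widehat{F}_n(\widehat{v})-\nabla\widehat{F}_n(u_\lambda)}\geq 0$, which permits replacing $\nabla\widehat{F}_n(\widehat{v})$ by $\nabla\widehat{F}_n(u_\lambda)$ inside the inner product at the cost of an inequality of the correct sign. H\"older then yields $(\|\widehat{v}-u_\lambda\|_r/\lambda)\|\EE_\PP(\Psi_\lambda)-(1/n)\sum_i\Psi_\lambda(x_i,y_i)\|_{r^{*}}$, and the continuous embedding $\ell^2\hookrightarrow\ell^{r^{*}}$ (valid since $r^{*}\geq 2$, with $\|\cdot\|_{r^{*}}\leq\|\cdot\|_2$) promotes the $\ell^{r^{*}}$-norm to the $\ell^2$-norm appearing in the statement. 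Dividing through by $\|\widehat{v}-u_\lambda\|_r$ (the case $\widehat{v}=u_\lambda$ being trivial) produces the claimed inequality.
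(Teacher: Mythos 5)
Your proposal is correct and follows essentially the same route as the paper: part (i) is the same direct computation, and part (ii) rests on the same ingredients, namely a variational principle producing a nearby point $\widehat{v}$ with a subgradient of norm at most $\sqrt{\varepsilon(\lambda)}$, the total convexity bound of Lemma~\ref{lem:totconvG} applied at $u_\lambda$ with the subgradient $-\EE_\PP(\Psi_\lambda)/\lambda$ from part (i), evaluation of the empirical gradient at $u_\lambda$ rather than at $\widehat{v}$, and H\"older together with $\norm{\cdot}_{r^*}\leq\norm{\cdot}_2$. The only cosmetic differences are that you invoke Br\o ndsted--Rockafellar where the paper uses Ekeland's variational principle (equivalent here), and that you pass to $\nabla\widehat{F}_n(u_\lambda)$ via monotonicity of $\nabla\widehat{F}_n$ and of $\partial G$ where the paper sums the corresponding convexity inequalities in function values.
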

\begin{proof}
\ref{thm:stability2i}: 
First note that \cite[Corollary~11.15(ii)]{Livre1} asserts that
$u_\lambda$ is well defined, since $F+\lambda G$ is proper and 
lower semicontinuous, and, by Remark~\ref{rmk2}\ref{rmk2ii}, 
strictly convex and coercive. Furthermore 
\cite[Corollary~26.3(vii)]{Livre1} implies that
$-\nabla F(u_\lambda)\in\lambda \partial G(u_\lambda)$. 
We derive from \eqref{eq:featuremap} that 
$A^*\colon L^2(\PP_\XC)\to\ell^2(\KK)\colon f\mapsto
\EE_{\PP_\XC}(f\Phi)$,
 and hence, since $F=R\circ A$, 
 \begin{equation}
(\forall u\in\ell^2(\KK))\quad\nabla F (u)
=A^*\nabla R (f_u)=E_\PP(\varphi),
\end{equation}
where $\varphi\colon(x,y)\mapsto2(f_u(x)-y)\Phi(x)\big)$. 
Let $(x,y)\in\XC\times\YC$. Then
\begin{align}
\abs{f_{u_\lambda}(x)-y}\leq\abs{f_{u_\lambda}(x)}+\abs{y} \leq
\sum_{k \in\KK}\abs{\scal{u_\lambda}{e_k}} \abs{\dictfunc_k(x)}+b
\leq \kappa \norm{u_\lambda}_2+b
\end{align}
and hence
$\norm{\Psi_\lambda(x,y)}_2\leq2 \abs{f_{u_\lambda}(x)-y}
\norm{\fmap(x)}_2\leq 2(\kappa \norm{u_\lambda}_2+b) \kappa$.
Moreover,
\begin{equation}
\int_{\XC\times\YC} \norm{\Psi_\lambda(x,y)}_2^2 \ud P(x,y) \leq
\int_{\XC\times\YC} 
\big(2 \kappa \abs{f_{u_\lambda}(x)-y} \big)^2 \ud P(x,y)=4
\kappa^2 R(f_{u_\lambda}).
\end{equation}
\ref{thm:stability2ii}: 
Let $\widehat{F}_n\colon\ell^2(\KK)\to\RP\colon u 
\mapsto (1/n) \sum_{i=1}^n|f_u(x_i)-y_i|^2$.
Since the restriction of $G$ to $\ell^r(\KK)$ is in 
$\Gamma_0(\ell^r(\KK))$ by Lemma~\ref{l:20151126a}, 
Ekeland's variational principle \cite[Theorem~1.45]{Livre1}
implies that there exists $\widehat{v}\in\ell^r(\KK)$ such that 
$\|\widehat{u}_{n,\lambda}(z_n)-\widehat{v}\|_r\leq
\sqrt{\varepsilon(\lambda)}$  and 
$\inf\|\partial (\widehat{F}_n+\lambda G)(\widehat{v})\|_{r^*}\leq
\sqrt{\varepsilon(\lambda)}$.
Using the inequality $a^2-b^2 \geq 2 (a-b)b$, we derive from
definitions \eqref{eq:Philambda} 
and \eqref{eq:fmap_dictionary0} that, for every 
$i\in\{1,\ldots,n\}$,
\begin{align}
\nonumber \sum_{k \in\KK} \scal{\widehat{v}-u_\lambda}{e_k} 
\scal{\Psi_\lambda(x_i,y_i)}{e_k} &= \sum_{k \in\KK}
\scal{\widehat{v}-u_\lambda}{e_k} 
2 (f_{u_\lambda}(x_i)-y_i) \phi_k(x_i)\\
\nonumber  &= 2 ( f_{\widehat{v}}(x_i)-f_{u_\lambda}(x_i) )
(f_{u_\lambda}(x_i)-y_i) \\
&\leq (y_i- f_{\widehat{v}}(x_i))^2 -(y_i-f_{u_\lambda}(x_i))^2
\end{align}
and, summing over $i$ and dividing by $n$, we obtain
\begin{equation}\label{eq:1}
\widehat{F}_n ( \widehat{v})-\widehat{F}_n(u_\lambda) \geq 
\sum_{k \in\KK} \scal{\widehat{v}-u_\lambda}{e_k} \big\langle \frac
1 n 
\sum\nolimits_{i=1}^n\Psi_\lambda(x_i,y_i)\, \big\vert\, e_k
\big\rangle.
\end{equation}
Lemma~\ref{lem:totconvG} and \ref{thm:stability2i} yield
\begin{equation}\label{eq:2}
\lambda G(\widehat{v})-\lambda G(u_\lambda) \geq 
\scal{\widehat{v}-u_\lambda}{- \EE_\PP (\Psi_\lambda) }+\lambda
\eta M 
\frac{\norm{\widehat{v}-u_\lambda}_r^2}{(\norm{u_\lambda}_r+
\norm{\widehat{v}-u_\lambda}_r)^{2-r}}.
\end{equation}
Next, since $\inf\|\partial (\widehat{F}_n+\lambda G)(\widehat{v})\|_{r^*}\leq \sqrt{\varepsilon(\lambda)}$,
there exists $\widehat{e}^*\in\ell^{r^*}(\KK)$ such that 
$\|\widehat{e}^*\|_{r^*}\leq \sqrt{\varepsilon(\lambda)}$ and
$\scal{u_\lambda-\widehat{v}}{\widehat{e}^*} 
\leq (\widehat{F}_n+\lambda G)(u_\lambda)-(\widehat{F}_n+\lambda G)(\widehat{v})$.
Summing inequalities \eqref{eq:1} and \eqref{eq:2}, we have
\begin{align}
\sqrt{\varepsilon(\lambda)}\|\widehat{v}-u_\lambda\|_r
&\geq (\objfunc_n+\lambda G)(\widehat{v}) 
- (\objfunc_n+\lambda G)(u_\lambda)\nonumber\\
&\geq \sum_{k \in\KK} \scal{\widehat{v}-u_\lambda}{e_k} \Big\langle
\frac 1 n 
\sum_{i=1}^n\Psi_\lambda(x_i,y_i)-\EE_\PP (\Psi_\lambda) \, \Big
\vert \,e_k \Big\rangle \nonumber\\
&\quad\;
+ \frac{\lambda \eta
M\norm{\widehat{v}-u_\lambda}_r^2}{(\norm{u_\lambda}_r 
+ \norm{\widehat{v}-u_\lambda}_r)^{2-r}}.
\end{align}
Hence, using H\"older's inequality,
\begin{equation}
\frac{\lambda \eta M\norm{\widehat{v}-u_\lambda}_r^2}{(\norm{u_\lambda}_r 
+ \norm{\widehat{v}-u_\lambda}_r)^{2-r}} \leq
\norm{\widehat{v}-u_\lambda}_r 
\bigg(\Big\Vert \frac 1 n
\sum\nolimits_{i=1}^n\Psi_\lambda(x_i,y_i)-\EE_\PP (\Psi_\lambda)
\Big\Vert_{r^*}+\sqrt{\varepsilon(\lambda)}\bigg)
\end{equation}
and the statement follows from the fact that $\norm{\cdot}_{r^*}
\leq \norm{\cdot}_{2}$.
\end{proof}

We recall the following concentration inequality in Hilbert spaces
\cite{Livre3}
and give the proof of the main result of this section.

\begin{lemma}[Bernstein's inequality]
\label{thm:bernstein}
Let $(U_i)_{1\leq i\leq n}$ be a finite sequence of i.i.d. random
variables on a probability space 
$(\Omega, \mathfrak{A},\PO)$ and taking values in a real
separable Hilbert space $\hh$. Let $\beta>0$, let $\sigma>0$ and
suppose that
$\max_{1\leq i\leq n} \norm{U_i}\leq\beta$ and that $\EE_\PO
\norm{U_i}^2\leq\sigma^2$.
Then for every $\tau>0$ and every integer $n \geq 1$
\begin{equation}
\PP \bigg[  \bigg\lVert \frac 1 n \sum_{i=1}^n (U_i-\EE_\PP U_i)
\bigg\rVert 
\geq \frac{2 \sigma}{\sqrt{n}}+4 \sigma
\sqrt{\frac{\tau}{n}}+\frac{4 \beta \tau}{3 n} \bigg ] \leq
e^{-\tau}.
\end{equation}
\end{lemma}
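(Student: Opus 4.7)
The plan is to implement the standard two-step reduction underlying every vector Bernstein inequality. First, I would use independence to bound the $L^{2}$-norm of the centered normalized sum, obtaining the deterministic centering term in the deviation. Second, I would invoke a Pinelis-Sakhanenko-type tail inequality for $\hh$-valued centered sums to control the fluctuation above this centering. Combining the two estimates produces the stated three-term bound.

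Setting $S_{n}=\sum_{i=1}^{n}(U_{i}-\EE_{\PO}U_{i})$, orthogonality of the centered i.i.d.~summands in $\hh$ gives
\[
\EE_{\PO}\|S_{n}\|^{2}=n\,\EE_{\PO}\|U_{1}-\EE_{\PO}U_{1}\|^{2}\leq n\sigma^{2},
\]
so Jensen's inequality yields $\EE_{\PO}\|S_{n}/n\|\leq\sigma/\sqrt{n}$. This accounts for the leading deterministic term $2\sigma/\sqrt{n}$ in the stated deviation, the extra factor of two providing a margin when passing from an expectation bound to a high-probability bound on the centering.

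Next, I would apply the Pinelis-Sakhanenko Bernstein inequality: for every independent centered $\hh$-valued $V_{1},\dots,V_{n}$ with $\max_{i}\|V_{i}\|\leq B$ a.s.\ and $\sum_{i}\EE_{\PO}\|V_{i}\|^{2}\leq V$,
\[
\PO\Big[\Big\|\sum_{i=1}^{n}V_{i}\Big\|\geq\EE_{\PO}\Big\|\sum_{i=1}^{n}V_{i}\Big\|+t\Big]\leq\exp\Big(-\frac{t^{2}}{2(V+Bt/3)}\Big),\qquad t>0.
\]
The genuinely nontrivial ingredient is a moment-generating-function estimate for $\|S_{n}\|$ obtained by induction on $n$, exploiting that every Hilbert space is $2$-smooth (Pinelis' smoothing lemma): each increment reduces to a scalar Bernstein MGF bound along the direction $S_{n-1}/\|S_{n-1}\|$, and Markov's inequality closes the argument. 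Specializing to $V=n\sigma^{2}$ and $B=\beta$ and then normalizing by $n$ yields
\[
\PO\big[\|S_{n}/n\|\geq\sigma/\sqrt{n}+s\big]\leq\exp\Big(-\frac{ns^{2}}{2\sigma^{2}+2\beta s/3}\Big).
\]

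To finish, I would equate the right-hand side with $e^{-\tau}$ and solve the quadratic $ns^{2}=2\tau\sigma^{2}+(2\beta\tau/3)s$ for its positive root; the elementary inequality $\sqrt{a^{2}+b}\leq a+\sqrt{b}$ then produces $s\leq 2\beta\tau/(3n)+\sigma\sqrt{2\tau/n}$, which is itself dominated by $4\beta\tau/(3n)+4\sigma\sqrt{\tau/n}$. Adding the $2\sigma/\sqrt{n}$ centering term gives exactly the three-term deviation in the statement. The only genuinely non-routine step is the Hilbert-space Bernstein MGF estimate itself; as this is classical (Pinelis' $2$-smooth Banach space machinery) and is precisely what the citation to \cite{Livre3} supplies, invoking the reference closes the argument.
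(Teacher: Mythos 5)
The paper does not prove this lemma at all: it is recalled as a known concentration inequality and delegated wholesale to Yurinsky's monograph \cite{Livre3}, so there is no in-paper argument to compare yours against step by step. Your blind reconstruction follows the standard route and its reductions are sound: writing $S_n=\sum_{i=1}^n(U_i-\EE_\PO U_i)$, independence and Jensen give $\EE_\PO\norm{S_n/n}\leq\sigma/\sqrt{n}$ as you claim, and your solution of the quadratic $ns^2=2\tau\sigma^2+(2\beta\tau/3)s$ followed by $\sqrt{a^2+b}\leq a+\sqrt{b}$ correctly yields a deviation $s\leq 2\beta\tau/(3n)+\sqrt{2}\,\sigma\sqrt{\tau/n}$, which is dominated by $4\beta\tau/(3n)+4\sigma\sqrt{\tau/n}$, so the stated three-term bound does follow from the intermediate inequality you invoke. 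The one point to tighten is the exact form of that intermediate inequality: the centered bound $\PO\big[\norm{S_n}\geq\EE_\PO\norm{S_n}+t\big]\leq\exp\big(-t^2/(2(V+Bt/3))\big)$ with no prefactor and no correction to the variance proxy is not the form most references state. Pinelis' $2$-smooth machinery gives the uncentered tail $\PO[\norm{S_n}\geq t]\leq 2\exp\big(-t^2/(2(V+Bt/3))\big)$, while Bousquet-type centered inequalities carry an extra $2B\,\EE_\PO\norm{S_n}$ inside the variance proxy. Either variant still delivers the lemma, because the generous constants ($2\sigma/\sqrt{n}$ rather than $\sigma/\sqrt{n}$, and $4$ rather than $\sqrt{2}$) leave ample slack to absorb a prefactor of $2$ or the extra variance term; but as written your argument leans on a hybrid statement that you should either locate precisely (Yurinsky's own derivation proceeds via exponential moment bounds for $\norm{S_n}$ and produces exactly these loose constants) or re-derive from one of the two standard forms with a line of extra bookkeeping.
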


\noindent
\begin{proof}[of Proposition~\ref{p:regfunc}]
\ref{p:regfunci}:
For every $f \in\C$, $R(f)=\norm{f-f^\dag}^2_{L^2}+ 
\inf R(L^2(\PP_\XC))$. Therefore, minimizing $R$
over $\C$ turns to find the element of $\C$ which is nearest to
$f^\dag$ in $L^2(\PP_\XC)$.

\ref{p:regfuncii}: It follows from \ref{p:regfunci}, that  
$\inf R(\C)=\norm{f_\C-f^\dag}^2_{L^2}+\inf R(L^2(\PP_\XC))$. 
Therefore, since for every $f \in\C$, 
$\scal{f-f_\C}{f^\dag-f_\C}\leq0$, we have
$R(f)-\inf R(\C)=\norm{f-f^\dag}^2_{L^2}-\norm{f_\C-f^\dag}^2_{L^2}
=\norm{f-f_\C}^2_{L^2}+2\scal{f-f_\C}{f_\C-f^\dag}
\geq\norm{f-f_\C}^2_{L^2}$.

\ref{p:regfunciii}: 
Let $f \in\C$. Using the fact that, 
for every $(a,b,c)\in\RR^3_+$ with 
$a\leq b$, $\sqrt{a+c}-\sqrt{b+c}\leq\sqrt{a}-\sqrt{b}$, 
we derive that
\begin{equation}
\sqrt{R(f)}-\sqrt{\inf R(\C)} \leq
\norm{f-f^\dag}_{L^2}-\norm{f_\C-f^\dag}_{L^2}
\leq \norm{f-f_\C}_{L^2}.
\end{equation}
Therefore, using the inequality $a^2-b^2\leq 2a(a-b)$, we obtain
\begin{align}
R(f)-
&\inf R(\C)\leq2 \sqrt{R(f)}\, \norm{f-f_\C}_{L^2}\nonumber \\
&= 2 \sqrt{\big(\norm{f-f^\dag}^2_{L^2}+ 
\inf R(L^2(\XC))\big)}\,\norm{f-f_\C}_{L^2} \nonumber\\
&\leq 2 \Big( \big( \norm{f-f_\C}_{L^2}+\norm{f_\C-f^\dag}_{L^2}
\big)^2+\inf R(L^2(\XC))\Big)^{1/2}\norm{f-f_\C}_{L^2} \nonumber\\
&=2\Big(\Big(\norm{f-f_\C}_{L^2}+\sqrt{\inf\nolimits_\C
R-\inf\nolimits_{L^2(\PP_\XC)} R}  \Big)^2+ 
\inf R(L^2(\XC))\Big)^{1/2}\norm{f-f_\C}_{L^2}
\end{align}
\end{proof}

\medskip
\noindent
\begin{proof}[of Theorem~\ref{thm:main}]~\ref{thm:maini}: 
Let $n \in\NN^*$, let $z_n= (x_i, y_i)_{1\leq i\leq n} \in (\XC
\times \YC)^n$ and let $\widehat{F}_n\colon u \in\ell^2(\KK) \to
\RP\colon u \mapsto (1/n) \sum_{i=1}^n |f_u(x_i)-y_i|^2$. Let
$u_G\in\Argmin G$, let $\lambda\in\RPP$, and let
$\rho_\lambda=\max\big\{1, \norm{u_G}_r, {( 2(b+\kappa
\norm{u_G}+1)^2/(\eta M \lambda))}^{1/r} \big\}$.
Since $F(u_G)\leq(b+\kappa \norm{u_G})^2$ and
$\objfunc_n(u_G)\leq (b+\kappa \norm{u_G})^2$, from
the definition of $\rho_{\lambda}$ and
Proposition~\ref{lem:regularization}\ref{lem:regularization00} we
derive that $\norm{u_{\lambda}-u_G}_r\leq\rho_{\lambda}$ and
$\norm{\widehat{u}_{n,\lambda}(z_n)-u_G}_r\leq\rho_{\lambda}$ .
It follows from Theorem~\ref{thm:stability2} that there exist
$\Psi_\lambda\colon\XC\times\YC\to \ell^2(\KK)$ and
$\widehat{v}\in\ell^r(\KK)$ such that
$\|\widehat{v}-\widehat{u}_{n,\lambda}(z_n)\|\leq
\sqrt{\varepsilon(\lambda)}$ and 
\begin{equation} 
\frac{M\eta\norm{\widehat{v}-u_\lambda}_r}
{(\norm{u_\lambda}_r+\norm{\widehat{v}-u_\lambda}_r)^{2-r}}
\leq \frac{1}{\lambda}\bigg(  \Big\Vert \frac 1 n \sum_{i=1}^n
\Psi_\lambda(x_i,y_i)-\EE_{\QQ} (\Psi_{\lambda})
\Big\Vert_{2}+\sqrt{\varepsilon(\lambda)}\bigg).
\end{equation}
Therefore
\begin{equation}\label{eq:stab2}
\norm{\widehat{v}-u_\lambda}_r \leq
\frac{(4\rho_\lambda)^{2-r}}{M\eta \lambda}
\left(\big\lVert \EE_\PP (\Psi_\lambda)-\frac{1}{n} \sum_{i=1}^n
\Psi_\lambda(x_i,y_i)\big\rVert_{2}+\sqrt{\varepsilon(\lambda)}
\right).
\end{equation}  
Thus,
\begin{equation}\label{eq:stab2bis}
\norm{\widehat{u}_{n,\lambda}(z_n)-u_\lambda}_r \leq
\sqrt{\varepsilon(\lambda)}+ \frac {(4\rho_\lambda)^{2-r}}{M\eta
\lambda}
\bigg(\big\lVert \EE_\PP(\Psi_\lambda)-\frac{1}{n} \sum_{i=1}^n
\Psi_\lambda(x_i,y_i)\big\rVert_{2}+
\sqrt{\varepsilon(\lambda)}\bigg).
\end{equation}  
Now, consider the i.i.d.~random vectors 
$\Psi_\lambda(X_i,Y_i)\colon\Omega \to \ell^2(\KK)$, for
$1\leq i\leq n$. It follows from
Theorem~\ref{thm:stability2}\ref{thm:stability2i} that 
$\max_{1\leq i\leq n}\norm{\Psi_\lambda(X_i,Y_i)}\leq2
\kappa(\kappa \rho_\lambda+b)$ and that 
$\max_{1\leq i\leq n}\EE_{\PO}\norm{\Psi_\lambda(X_i,Y_i)}^2
\leq 4 \kappa^2 R( f_{u_\lambda})$. 
Now set $\beta_\lambda=2\kappa(\kappa \rho_\lambda+b)$ and 
$\sigma_\lambda^2= \kappa^2 R( f_{u_\lambda})$.
Then Bernstein's inequality in Hilbert spaces 
(Lemma~\ref{thm:bernstein}) gives
\begin{equation}
(\forall\tau\in\RPP)\quad\PO  \Big[ \Big\lVert \EE
(\Psi_\lambda(X,Y))-\frac{1}{n} \sum_{i=1}^n \Psi_\lambda(X_i,
Y_i)\Big\rVert_{2}\leq \delta(n,\lambda, \tau) \Big]  \geq 1
-e^{-\tau},
\end{equation}
where $\delta(n,\lambda, \tau)=2\sigma_\lambda/\sqrt{ n}
+ 4\sigma_\lambda \sqrt{\tau/ n}+4\beta_\lambda \tau /(3 n)$.
Thus, recalling \eqref{eq:stab2bis} we have
\begin{equation}
\label{eq:sampleerr1}
\PO  \Big[  \norm{\widehat{u}_{n,\lambda}(\ZZZ_n)-u_\lambda}_r 
> \sqrt{\varepsilon(\lambda)}+ \frac
{(4\rho_\lambda)^{2-r}}{M\eta\lambda} \big(\delta(n,\lambda, \tau)
+\sqrt{\varepsilon(\lambda)}\big)\Big]
\leq e^{-\tau}.
\end{equation}
Set $\gamma_0=2(b+\kappa \norm{u_G}+1)^2$ and 
$\gamma_1=4^{2-r} \gamma_0^{2/r-1}/ (\eta M)^{2/r}$.
We note that, since $\sigma_\lambda$ is bounded, 
say by $\gamma_2$, for $\lambda<1$
sufficiently small,
we have
\begin{align}
\label{eq:ooo}
\nonumber&\frac {(4\rho_\lambda)^{2-r}}{M\eta \lambda} \big(\delta(n,\lambda,
\tau) +\sqrt{\varepsilon(\lambda)}\big)\\
\nonumber&= \frac {4^{2-r}}{M\eta} \bigg(\frac{\gamma_0}{\eta M}\bigg)^{\frac 2
r-1} \frac{1}{\lambda^{2/r}} 
\bigg( \frac{2\sigma_\lambda}{\sqrt{n}}+4 \sigma_\lambda
\sqrt{\frac \tau n} 
+ \frac{4 \beta_\lambda \tau}{3 n}+\sqrt{\varepsilon(\lambda)} \bigg)\\
&\leq \gamma_1
 \bigg( \frac{2 \gamma_2}{ \lambda^{2/r}n^{1/2}}+4 \gamma_2\frac{\sqrt{\tau}}{\lambda^{2/r}n^{1/2}}
+ \frac{8 \tau \kappa^2 \gamma_0/(\eta M)^{1/r}}{3 n \lambda^{3/r}}
+ \frac{8 \tau \kappa b}{3 n \lambda^{2/r}} 
+ \frac{\sqrt{\varepsilon(\lambda)}}{\lambda^{2/r}}\bigg).
\end{align}
Therefore, since $1/(\lambda_n^{2/r} n^{1/2}) \to 0$ and ${\sqrt{\varepsilon(\lambda_n)}}/{\lambda_n^{2/r}}\to 0$
it follows that 
\begin{equation}
\frac{(4 \rho_{\lambda_n})^{2-r}}{\lambda_n}\Big( \delta(n, \lambda_n,
\tau)+\sqrt{\varepsilon(\lambda_n)}\Big)\to 0
\end{equation}
and hence, in view of \eqref{eq:sampleerr1}, we get
$\norm{\widehat{u}_{n,\lambda_n}(\ZZZ_n)-u_{\lambda_n}}_r \to 0$ in
probability.
Moreover, using Proposition~\ref{p:regfunc}\ref{p:regfuncii},
\begin{align}
\nonumber\norm{\widehat{f}_n-f_\C}_{L^2} &
\leq \norm{A \widehat{u}_{n,\lambda_n}(Z_n)-A u_{\lambda_n}}_{L^2}
+\norm{A u_{\lambda_n}-f_\C}_{L^2}\\
\label{eq:regru}&\leq \norm{A}\norm{\widehat{u}_{n,\lambda_n}(Z_n)-
u_{\lambda_n}}_{r}+\sqrt{F(u_{\lambda_n})-\inf F(\dom G)}.
\end{align}
Since $F(u_{\lambda_n})-\inf F(\dom G) \to 0$ by 
Proposition~\ref{lem:regularization}\ref{lem:regularizationi}, and 
$\norm{ \widehat{u}_{n,\lambda}(Z_n)-u_{\lambda_n}}_{r}\to 0$
in probability, we derive that $\norm{\widehat{f}_n-f_\C}_{L^2}\to 0$
 in probability.
 
\ref{thm:mainii}:
Let $n\in\NN^*$, let $\eta\in\RPP$, and set 
\begin{equation}
\Omega_{n,\eta}=\Big\{\norm{\widehat{f}_n-f_\C}_{L^2}>\norm{A}\eta
+\sqrt{F(u_{\lambda_n})-\inf F(\dom G)}\Big\}.
\end{equation}
Since $\varepsilon(\lambda_n)=O(1/n)$, it follows from
\eqref{eq:ooo} that there exists $\gamma_3\in\RPP$
such that, for every $\tau\in\left]1,+\infty\right[$,  
and every $n \in\NN^*$,
\begin{equation}
\label{eq:ltrm}
\frac{(4\rho_{\lambda_n})^{2-r}}{\eta M\lambda_n} \big(\delta(n,\lambda_n, \tau) 
+\sqrt{\varepsilon(\lambda_n)}\big)
\leq \frac{\gamma_3\tau}{\lambda_n^{2/r}n^{1/2}}.
\end{equation}
Let $\xi\in\left]1,+\infty\right[$. There exists $\overline{n}\in\NN^*$,
such that, for every integer $n\geq \overline{n}$,
\begin{equation}
\label{eq:eta}
\frac{\gamma_3}{\lambda_n^{2/r}n^{1/2}}\leq 
\frac{\gamma_3\xi\log n}{\lambda_n^{2/r}n^{1/2}}\leq\eta.
\end{equation}
Therefore, it follows from \eqref{eq:sampleerr1}, \eqref{eq:regru},
\eqref{eq:ltrm}, and \eqref{eq:eta} that, for $n$ large enough,
\begin{equation}
\PO\Omega_{n,\eta} \leq
\exp\left(-\frac{\eta\lambda_n^{2/r}n^{1/2}}{\gamma_3} \right)
\leq  \exp\left(-\xi\log n\right)=n^{-\xi}.
\end{equation}
Thus, $\sum_{n=\bar{n}}^{+\infty} 
\PO\Omega_{n,\eta}<+\infty$
and we derive from the Borel-Cantelli lemma that 
$\PO\big(\bigcap_{k\geq\bar{n}}\bigcup_{n\geq k}\Omega_{n,\eta}\big)=0$.
Recalling Proposition~\ref{lem:regularization}\ref{lem:regularizationi},
we conclude that the sequence $\|\widehat{f}_n-f_{\mathcal{C}}\|_{L^2}\to 0$
$\PO\verb0-0\text{a.s.}$

\ref{thm:mainiii}:~First note that 
Proposition~\ref{lem:regularization}\ref{lem:regularizationi} 
implies that $u^\dag$ is well defined and
that $\rho=\sup_{\lambda\in\RPP}\norm{u_\lambda}<+\infty$.
Now, let $\lambda \in\RPP$ and let $n\in\NN^*$.
Since $\norm{u_\lambda}\leq \rho$, arguing as in the proof 
of \ref{thm:maini}, we obtain
\begin{equation}
\label{eq:sampleerr2}
(\forall \tau\in\RPP)\;\PO  \Big[
\norm{\widehat{u}_{n,\lambda}(\ZZZ_n)-u_\lambda}_r >
\sqrt{\varepsilon(\lambda)}  +
\frac {(4\rho)^{2-r}}{M\lambda}
\big(\delta(n,\tau)+\sqrt{\varepsilon(\lambda)} \big)\Big] \leq
e^{-\tau},
\end{equation}
where $\sigma=2\kappa(\kappa\rho+b)$ and $\delta(n, \tau)
=4\sigma/\sqrt{ n}+ 4\sigma \sqrt{\tau/ n}+4\sigma \tau /(3 n)$.

\ref{thm:mainiiia}:~Since $1/(\lambda_n n^{1/2}) \to 0$, we have 
$(1/\lambda_n) \delta(n,\tau)\to 0$
and hence in view of \eqref{eq:sampleerr2},
$\norm{\widehat{u}_{n,\lambda_n}(\ZZZ_n)-u_{\lambda_n}}_r \to 0$ in probability.
Moreover, since
$\norm{u_{n,\lambda_n}(\ZZZ_n)-u^\dagger}\leq
\norm{u_{n,\lambda_n}(\ZZZ_n)
-u_{\lambda_n}}+\norm{u_{\lambda_n}-u^\dag}$,
the statement follows by 
Proposition~\ref{lem:regularization}\ref{lem:regularizationii}. 
 
\ref{thm:mainiiib}: The proof follows the same line as that of \ref{thm:mainii}.
\end{proof}

\section{Algorithm}
\label{sec:algorithm}
The goal of this section is to prove Theorem~\ref{t:222} and
Theorem~\ref{thm:algoconvergence}. The proof of 
Theorem~\ref{t:222} is based on the following fact.

\begin{lemma}{\rm\cite[Lemma~4.1]{Villa13}}
\label{l:20151125a}
Let $\HH$ be a real Hilbert space, let $\beta \in\RPP$, and let
$\delta \in\RP$.  Let $F\colon \HH \to \RX$ be a convex
differentiable function with  $\beta$-Lipschitz continuous
gradient, and let $G \in\Gamma_0(\HH)$. Then, for every $(u,v,w)
\in\HH^3$ and every $v^* \in\partial_\delta G(v)$,
\begin{equation}
(F+G)(v)\leq(F+G)(u) + \scal{ v-u}{\nabla F(w)+v^*} 
+ \frac{\beta}{2} \norm{v-w}^2+\delta.
\end{equation}
\end{lemma}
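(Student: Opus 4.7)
The plan is to combine three standard inequalities: the descent lemma for $F$ (from Lipschitz continuity of $\nabla F$), the convex lower bound for $F$ expressed at the auxiliary point $w$, and the defining inequality of the $\delta$-subdifferential of $G$ at $v$.

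First, since $\nabla F$ is $\beta$-Lipschitz and $F$ is differentiable on $\HH$, the classical descent lemma applied to the pair $(v,w)$ yields
\begin{equation*}
F(v)\leq F(w)+\scal{v-w}{\nabla F(w)}+\frac{\beta}{2}\norm{v-w}^2.
\end{equation*}
Second, since $F$ is convex and differentiable, applying the subgradient inequality at $w$ with the test point $u$ gives
\begin{equation*}
F(w)\leq F(u)+\scal{w-u}{\nabla F(w)}.
\end{equation*}
Adding these two inequalities and using the telescoping identity $\scal{v-w}{\nabla F(w)}+\scal{w-u}{\nabla F(w)}=\scal{v-u}{\nabla F(w)}$, I obtain
\begin{equation*}
F(v)\leq F(u)+\scal{v-u}{\nabla F(w)}+\frac{\beta}{2}\norm{v-w}^2.
\end{equation*}

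Third, I invoke the definition of the $\delta$-subdifferential: $v^{*}\in\partial_{\delta}G(v)$ means that for every $u\in\HH$, $G(u)\geq G(v)+\scal{u-v}{v^{*}}-\delta$, which rearranges to
\begin{equation*}
G(v)\leq G(u)+\scal{v-u}{v^{*}}+\delta.
\end{equation*}
Summing the last two displayed inequalities delivers exactly the claimed bound on $(F+G)(v)$.

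There is no genuine obstacle here; the argument is essentially a bookkeeping exercise. The only point that deserves a moment of care is the sign convention in the definition of $\partial_{\delta}G$, which must be oriented so that one gets an upper estimate on $G(v)$ in terms of $G(u)$; once that is set, the cancellation of the $\nabla F(w)$ cross terms is automatic and the assumption that $u$, $v$, $w$ are arbitrary in $\HH$ is used only through the three pointwise inequalities above.
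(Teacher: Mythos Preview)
Your argument is correct and is the standard proof of this inequality: the descent lemma gives the upper bound on $F(v)$ in terms of $F(w)$, convexity of $F$ at $w$ bounds $F(w)$ by $F(u)$, and the definition of $\partial_\delta G(v)$ bounds $G(v)$ by $G(u)$; summing these three inequalities yields the result. Note that the paper does not supply its own proof but simply cites \cite[Lemma~4.1]{Villa13}, where the same three-line argument appears.
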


\begin{proof}[of Theorem~\ref{t:222}]
Let $m\in\NN$ and set 
\begin{equation}
\tilde{v}_m=\prox_{\gamma_m G} (u_m -\gamma_m(\nabla F(u_m)+b_m)).
\end{equation}
Since
\begin{equation}\label{eq:prox}
v_m\in\Argmin^{\delta_m^2/(2\gamma_m)}_{w\in\mathcal{H}}
\left\{G(w)+\frac{1}{2\gamma_m}
\|w-(u_m -\gamma_m(\nabla F(u_m)+b_m))\|^2 \right\},
\end{equation}
using the strong convexity of the objective function in
\eqref{eq:prox}, we get 
\begin{equation}
\|v_m-\tilde{v}_m\|\leq \delta_m.
\end{equation}
Therefore, setting $a_m=v_m-\tilde{v}_m$, we have
\begin{equation}
u_{m+1}=u_{m}+\tau_m \big(\prox_{\gamma_m G} (u_m -\gamma_m(\nabla
F(u_m)+b_m))+a_m-u_m\big).
\end{equation}
Hence \eqref{e:main2} is an instance of
the inexact forward-backward algorithm studied in \cite{Smms05}
and we can therefore use the results of 
\cite[Theorem 3.4]{Smms05}.

\ref{t:1i-1}--\ref{t:1i-1Z}:
The statements follow from \cite[Theorem 3.4(i)-(ii)]{Smms05}.

\ref{t:1i}: We have
\begin{align}
\label{eq:sum}
\nonumber\|u_m-v_m\|^2&\leq 2 \|u_m-\tilde{v}_m\|^2+2 \|a_m\|^2\\
&\leq 4 \|u_m-\prox_{\gamma_m G}(u_m-\gamma_m\nabla F(u_m))\|^2
+4\|b_m\|^2+2 \|a_m\|^2.
\end{align}
Therefore, the statement follows from 
\cite[Theorem 3.4(iii)]{Smms05}. 

\ref{t:1ii}:
By \eqref{eq:prox} and \cite[Lemma~1]{SalVil12}, there exist
$\delta_{1,m}\in[0,+\infty[$, $\delta_{2,m}\in[0,+\infty[$, and
$e_m\in\mathcal{H}$ with $\delta_{1,m}^2+\delta_{2,m}^2\leq
\delta_m^2$ and $\|e_m\|\leq \delta_{2,m}$ such that
\begin{equation}
\label{e:20151119d}
v^{*}_m=\frac{u_m-v_m}{\gamma_m}  -(\nabla F(u_m)+b_m)+
\frac{e_m}{\gamma_m}\in\partial_{{\delta_{1,m}^2}/(2\gamma_m)}
G(v_m). 
\end{equation}
Now set $J=F+G$. It follows from Lemma~\ref{l:20151125a} that, for
every $u \in\HH$,
\begin{align}
\label{e:20151123a}
\nonumber J(v_m) &- J(u)\\
\nonumber &\leq \scal{ v_m-u}{\nabla F(u_m)+v^*_m} 
+ \frac{\beta}{2} \norm{v_m-u_m}^2+\frac{\delta^2_{1,m}}{2
\gamma_m}\\
\nonumber &=\frac{1}{\gamma_m}\scal{v_m-u}{ u_m-v_m}  
+ \frac{\beta}{2} \norm{v_m-u_m}^2 \\
\nonumber &\hspace{30ex}+
\frac{1}{\gamma_m}\scal{v_m-u}{e_m-\gamma_m b_m} 
+ \frac{\delta^2_{1,m}}{2 \gamma_m}\\
&=\frac{1}{2 \gamma_m}\big(\norm{u_m-u}^2-\norm{v_m-u}^2 \big)
+ \frac 1 2 \bigg( \beta-\frac{1}{\gamma_m}\bigg) \norm{v_m-u_m}^2\\ 
\nonumber &\hspace{30ex}
+ \frac{1}{\gamma_m}\scal{v_m-u}{e_m-\gamma_m b_m} 
+ \frac{\delta^2_{1,m}}{2 \gamma_m}.
\end{align}
We derive from \ref{t:1i-1} and \ref{t:1i} that 
${(\scal{v_m-u}{u_m-v_m})}_{m \in\NN}$ is square summable.
Therefore, if we let $u \in\Argmin J$, it follows from
\eqref{e:20151123a} that $(J(v_m)-\inf J(\HH))_{m\in\NN}$ is square
summable.
Now, if we let $u=u_m$ in \eqref{e:20151123a} we have
\begin{align}
\label{e:20151123b}
J(v_m)-J(u_m)
&\leq 
\bigg(\frac{\beta}{2}-\frac{1}{\gamma_m} \bigg) 
\norm{u_m-v_m}^2 
+\frac{1}{\gamma_m} \bigg(\norm{u_m-v_m} \norm{e_m-\gamma_m b_m}
+\frac{\delta_{1,m}^2}{2}\bigg)\nonumber\\
&\leq\frac{1}{\gamma_m} \bigg(\norm{u_m-v_m} \norm{e_m-\gamma_m b_m}
+\frac{\delta_{1,m}^2}{2}\bigg).
\end{align}
Set $\underline{\gamma}=\inf_{m \in\NN} \gamma_m$. Since 
$u_{m+1}=u_m+\tau_m (v_m-u_m)$, 
using the convexity of $J$ and \eqref{e:20151123b}, we get
\begin{align}
\label{e:20151123c}
J(u_{m+1})-\inf J(\HH)
&\leq J(u_m)-\inf J(\HH) + \tau_m (J(v_m)-J(u_m))\nonumber\\
&\leq J(u_m)-\inf J(\HH) 
+ \underline{\gamma}^{-1} \big( \|u_m-v_m\| \norm{e_m -\gamma_m
b_m}+\delta^2_{1,m}/2 \big).
\end{align}
Thus, since
${\big( \|u_m-v_m\|\,\norm{e_m -\gamma_m b_m}+\delta^2_{1,m}/2
\big)}_{m \in\NN}$
is summable, \cite[Lemma~2.2.2]{LivrePol},
ensures that $( J(u_m)-\inf J(\HH))_{m \in\mathbb{N}}$ converges.
In view of the inequalities in \eqref{e:20151123c} that
its limit must be $0$.

\ref{t:1ibis}: Let $u \in\Argmin J$. Since, 
$u_m-u=(1-\tau_{m-1}) (u_{m-1}-u)+\tau_{m-1}(v_{m-1}-u)$,
it follows from the convexity of $\norm{\cdot}^2$ that
\begin{equation}
\norm{u_m-u}-\norm{v_m-u}^2\leq(1-\tau_{m-1}) \norm{u_{m-1}-u}^2 
+ \norm{v_{m-1}-u}^2-\norm{v_m-u}^2.
\end{equation}
Therefore, it follows from \eqref{e:20151123a}
that
\begin{align}
\label{e:20151123d}
\nonumber 0 
&\leq J(v_m)-J(u) \\ 
&\leq\nonumber\frac{1-\lambda_{m-1}}{2 \underline{\gamma}} \norm{u_{m-1}-u}^2
+ \frac{1}{2\underline{\gamma}} \big( \norm{v_{m-1}-u}^2-\norm{v_m-u}^2 \big)\\
 &+ \frac 1 2 \bigg( \beta-\frac{1}{\gamma_m}\bigg) 
\norm{v_m-u_m}^2+\frac{1}{\underline{\gamma}} \bigg(\norm{v_m-u} 
\norm{e_m-\gamma_m b_m}
+ \frac{\delta_{1,m}^2}{2}\bigg).
\end{align}
Hence, ${(J(v_m)-\inf J(\HH))}_{m \in\NN}$ is summable, 
for each term on the right hand side of \eqref{e:20151123d} is summable. 
Since $u_{m+1}=(1- \tau_m) u_m+\tau_m v_m$, convexity of $J$ yields
\begin{equation}
0\leq J(u_{m+1})-\inf J(\HH)\leq(1-\tau_m)\big( J(u_m)-\inf J(\HH) \big)
+ \tau_m\big( J(v_m)-\inf J(\HH)\big).
\end{equation}
The summability of $(1-\tau_m)_{m \in\NN}$ and 
${\big( J(v_m)-\inf J(\HH)\big)}_{m \in\NN}$ implies that of
${\big( J(u_m)-\inf J(\HH)\big)}_{m \in\NN}$.

\ref{t:1iibis}:
Since ${\big( J(u_m)-\inf J(\HH)\big)}_{m \in\NN}$ is summable, it follows from
\eqref{e:20151123c} and \cite[Lemma~3]{Davi15}
that $\big( J(u_m)-\inf J(\HH)\big)=o(1/m)$.
\end{proof}

The purpose of the rest of the section is to show how
approximations of the type considered in Theorem~\ref{t:222}
(equation \eqref{e:main2}) can be computed explicitly.

\begin{lemma}\label{l:20150704a}
Let $h\colon\RR\to\RR$ be convex and such that 
$0 \in\Argmin_{\RR} h$, let $(s,\mu) \in\RR^2$, and 
let $\alpha \in [-1,1]$. 
Let $\beta\in\RP$ be the Lipschitz constant of $h$ in 
$[-1-\abs{\mu},1+\abs{\mu}]$ and set
\begin{equation}
\delta=\sqrt{(2 \beta+2 \abs{\mu}+1)\abs{\alpha}}\qquad
\text{and}\qquad
s=\prox_h\mu+\alpha.
\end{equation}
Then $s \simeq_\delta \prox_h\mu$.
Moreover, $\widehat{s}=\sign(\mu) \max\{0, \sign(\mu)s\}$ satisfies 
$\widehat{s} \simeq_\delta \prox_h\mu$
and $\mu\widehat{s}\geq 0$.
\end{lemma}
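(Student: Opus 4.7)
The plan is to set $p = \prox_h \mu$ and $\varphi(v) = h(v) + \frac{1}{2}(v-\mu)^2$, and bound both $\varphi(s) - \varphi(p)$ and $\varphi(\widehat{s}) - \varphi(p)$ by $\delta^2/2$, which is exactly the $\simeq_\delta$ condition by definition. A preliminary step is to locate $p$: since $0 \in \Argmin h$ forces $\prox_h 0 = 0$, nonexpansiveness of $\prox_h$ gives $|p| \leq |\mu|$. I would further argue via the inclusion $\mu - p \in \partial h(p)$ that $\sign p = \sign \mu$: if $p < 0$, convexity of $h$ together with $0 \in \Argmin h$ forces every subgradient at $p$ to lie in $\RM$, whence $\mu - p \leq 0$ and $\mu \leq p < 0$; the case $p>0$ is symmetric. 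Consequently $|p - \mu| = |\mu| - |p| \leq |\mu|$, and both $p$ and $s = p+\alpha$ lie in $[-1-|\mu|,\,1+|\mu|]$ since $|\alpha|\leq 1$.

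The bound on $\varphi(s) - \varphi(p)$ is then a short computation: expand
\[
\tfrac{1}{2}(s-\mu)^2 - \tfrac{1}{2}(p-\mu)^2 = \alpha(p-\mu) + \alpha^2/2,
\]
apply $|h(s)-h(p)| \leq \beta|\alpha|$ from the Lipschitz hypothesis, use $|p-\mu| \leq |\mu|$, and finally $\alpha^2 \leq |\alpha|$, to obtain
\[
\varphi(s) - \varphi(p) \leq \bigl(\beta + |\mu| + \tfrac{1}{2}\bigr)|\alpha| = \frac{\delta^2}{2}.
\]

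For $\widehat{s}$ the key observation is that $\widehat{s}$ always lies in the closed segment joining $p$ and $s$. When $\sign(\mu)\,s \geq 0$ one has $\widehat{s} = s$; when $\sign(\mu)\,s < 0$, $s$ sits on the opposite side of $0$ from $p$ (using $\sign p = \sign \mu$), so $\widehat{s} = 0$ lies between $p$ and $s$. Convexity of $\varphi$, combined with the fact that $p$ is its minimizer, then gives $\varphi(\widehat{s}) \leq \max(\varphi(p), \varphi(s)) = \varphi(s)$, and the previous bound transfers to $\widehat{s}$. The sign condition $\mu \widehat{s} \geq 0$ is immediate from the definition of $\widehat{s}$.

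The main obstacle is the sharp estimate $|p - \mu| \leq |\mu|$. The crude bound $|p - \mu| \leq |p| + |\mu| \leq 2|\mu|$ would yield $\delta^2 \geq (2\beta + 4|\mu| + 1)|\alpha|$, off by a term of order $|\mu||\alpha|$ relative to the stated constant. Closing this gap hinges on the hypothesis $0 \in \Argmin h$, which forces $\sign p = \sign \mu$ and hence $|p-\mu| = \bigl||p|-|\mu|\bigr|$; without this, the advertised value of $\delta$ would not suffice.
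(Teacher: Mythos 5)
Your proof is correct and follows essentially the same route as the paper: locate $\prox_h\mu$ via $\prox_h0=0$ and the sign/magnitude consequences of $0\in\Argmin h$, then bound the difference of the prox objective directly using the Lipschitz constant of $h$ on $[-1-\abs{\mu},1+\abs{\mu}]$ together with $\abs{\prox_h\mu-\mu}\leq\abs{\mu}$ and $\alpha^2\leq\abs{\alpha}$. The only (immaterial) divergence is in the treatment of $\widehat{s}$: you pass through convexity of $v\mapsto h(v)+\tfrac12(v-\mu)^2$ and the fact that $\widehat{s}$ lies on the segment joining $\prox_h\mu$ and $s$, whereas the paper simply observes that $\abs{\widehat{s}-\prox_h\mu}\leq\abs{\alpha}$ and reuses the first estimate.
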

\begin{proof}
Let $t=\prox_h\mu$.
Since $0 \in\Argmin h$, $\prox_h0=0$. 
Hence, since $\prox_h$ is nonexpansive and increasing
\cite[Lemma~2.4]{Smms05}, $\abs{t}\leq\abs{\mu}$ and 
$\sign(t)=\sign(\mu)$. We note that
$\abs{s}\leq\abs{s-t}+\abs{t}\leq1+\abs{\mu}$. Thus,
\begin{align}
\nonumber h(s)+ \frac 1 2 \abs{s-\mu}^2-h(t)-\frac 1 2
\abs{t-\mu}^2 &\leq \beta \abs{s-t} 
+ \frac 1 2 \abs{s-t} \abs{s- \mu+t- \mu}\\
&\leq\frac 1 2 (2 \beta+1+2\abs{\mu}) \abs{\alpha}.
\end{align}
To conclude, it is enough to note that 
$\abs{\widehat{s}-\prox_{h}(\mu)}\leq\abs{\alpha}$.
\end{proof}

\begin{lemma}
\label{lem:compprox} 
Let $h\in\Gamma_0(\RR)$, let $\sigma\in\Gamma_0(\RR)$ be a support
function, and set $\phi=h+\sigma$. 
Let $(s,x)\in\RR^2$ be such that $s
\prox_\sigma(x)\geq 0$, and let $\delta\in\RP$. Then
\begin{equation}
s\simeq_{\delta}\prox_{h}(\prox_\sigma x)\quad\Rightarrow\quad
s\simeq_{\delta}\prox_{\phi}x.
\end{equation}
\end{lemma}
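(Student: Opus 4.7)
The plan is to rewrite the scalar proximity subproblem for $\phi=h+\sigma$ at $x$ in terms of the subproblem for $h$ at $y:=\prox_\sigma(x)$, and then to show that the discrepancy between the two collapses to an additive constant once the sign compatibility $s\,\prox_\sigma(x)\geq 0$ is invoked.

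First, I would invoke Moreau's decomposition. Since $\sigma$ is a support function, I write $\sigma=\sigma_D$ for some nonempty closed convex $D\subset\RR$; then $\sigma^*=\iota_D$, and Moreau's identity gives $x=\prox_\sigma x+\prox_{\iota_D}x=y+\proj_D(x)$. Setting $p=\proj_D(x)=x-y$, the projection characterization $\langle x-p,d-p\rangle\leq 0$ for $d\in D$ reads $\langle y,d-p\rangle\leq 0$, so $\sigma(y)=\sup_{d\in D}yd=yp$.

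Next I would derive the key algebraic identity. Expanding $(v-x)^2=(v-y-p)^2$ and collecting terms, one obtains, for every $v\in\RR$,
\begin{equation}
\phi(v)+\tfrac{1}{2}(v-x)^2 \;=\; h(v)+\tfrac{1}{2}(v-y)^2 + \bigl(\sigma(v)-vp\bigr)+\tfrac{1}{2}(x^2-y^2).
\end{equation}
Because $p\in D$ and $\sigma=\sigma_D$, the bracket $\sigma(v)-vp$ is nonnegative for every $v\in\RR$, and taking the infimum gives
\begin{equation}
\min_{v\in\RR}\!\Bigl(\phi(v)+\tfrac{1}{2}(v-x)^2\Bigr) \;\geq\; \min_{v\in\RR}\!\Bigl(h(v)+\tfrac{1}{2}(v-y)^2\Bigr)+\tfrac{1}{2}(x^2-y^2).
\end{equation}

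The crux is to show that the bracket vanishes at $v=s$ under the sign hypothesis $sy\geq 0$, so that the identity above collapses at $s$ to an equality $\phi(s)+\tfrac{1}{2}(s-x)^2=h(s)+\tfrac{1}{2}(s-y)^2+\tfrac{1}{2}(x^2-y^2)$. Here I would exploit positive homogeneity of the support function $\sigma$: if $y\neq 0$, the condition $sy\geq 0$ allows me to write $s=ty$ with $t\geq 0$, so $\sigma(s)=t\sigma(y)=t(yp)=sp$; the case $s=0$ is trivial. Combining this equality with the preceding lower bound and with the hypothesis $s\simeq_\delta\prox_h y$ then yields $\phi(s)+\tfrac{1}{2}(s-x)^2\leq \min_v(\phi(v)+\tfrac{1}{2}(v-x)^2)+\tfrac{\delta^2}{2}$, which is precisely $s\simeq_\delta\prox_\phi x$. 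The main delicate step is the identity $\sigma(s)=sp$: it is exactly what forces the sign compatibility hypothesis, and it rests on the positive homogeneity of $\sigma$ coupled with the Moreau-type identification $\sigma(y)=yp$.
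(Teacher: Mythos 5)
Your argument is correct and takes a genuinely different route from the paper's. The paper proves the lemma at the level of approximate subdifferentials: it invokes \cite[Lemma~2.4]{SalVil12} to translate the hypothesis $s\simeq_{\delta}\prox_{h}(\prox_\sigma x)$ into an inclusion $\mu-s+e\in\partial_{\delta_1^2/2}h(s)$ with $\mu=\prox_\sigma x$, adds the exact subgradient $x-\mu\in\partial\sigma(\mu)$, uses the sign condition and positive homogeneity to pass from $\partial\sigma(\mu)$ to $\partial\sigma(s)$, and concludes that $x-s+e\in\partial_{\delta_1^2/2}\phi(s)$, which it translates back into $s\simeq_\delta\prox_\phi x$. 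You instead verify Definition~\ref{d:287s54} directly: with $y=\prox_\sigma x$ and $p=x-y=\proj_D x$ from Moreau's decomposition, your identity
\begin{equation*}
\phi(v)+\tfrac{1}{2}(v-x)^2=h(v)+\tfrac{1}{2}(v-y)^2+\bigl(\sigma(v)-vp\bigr)+\tfrac{1}{2}(x^2-y^2)
\end{equation*}
is correct (the constant $yp+\tfrac{1}{2}p^2$ does equal $\tfrac{1}{2}(x^2-y^2)$), the bracket is nonnegative because $p\in D$, and it vanishes at $v=s$ by positive homogeneity once $s=ty$ with $t\geq 0$. Both proofs pivot on exactly the same two structural facts --- Moreau's identity and $\sigma(ty)=t\sigma(y)$ for $t\geq 0$ under the sign hypothesis --- but yours is self-contained and elementary, requiring neither $\varepsilon$-subdifferential calculus nor the external lemma, at the price of carrying an explicit algebraic identity that the subdifferential formulation keeps implicit. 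Your version also delivers the exact-prox statement $\prox_\phi=\prox_h\circ\prox_\sigma$ (on the sign-compatible set) as the $\delta=0$ case with no extra work.

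One caveat, which you share with the published proof: the configuration $y=\prox_\sigma x=0$ with $s\neq 0$ satisfies the hypothesis $s\,\prox_\sigma(x)\geq 0$ but is handled by neither argument --- you cannot write $s=ty$, and in the paper's proof the inclusion $\partial\sigma(ts)\subset\partial\sigma(s)$ fails at $t=0$ since $\partial\sigma(0)$ is the whole set $D$. Your dichotomy ``$y\neq 0$, or $s=0$'' therefore leaves this degenerate case open, exactly as the paper does; it is excluded in the paper's application by the subsequent remark (which shows that $\delta\leq\abs{s}$ forces the right geometry), but it would be worth flagging explicitly.
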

\begin{proof}
Let $\mu=\prox_\sigma x$ and
$s\simeq_{\delta}\prox_{h}(\prox_\sigma x)$.
By \cite[Lemma 2.4]{SalVil12} there exist
$(\delta_1,\delta_2)\in\RP^2$ and $e\in\RR$, such that
\begin{equation}
\mu-s+e\in\partial_{{\delta_1^2}/2} h(s), \quad \abs{e}\leq
\delta_2, \quad\text{and}\quad\delta_1^2+\delta_2^2\leq \delta^2. 
\end{equation}
Hence
\begin{equation}
x-s+e=x-\mu+\mu-s+e\in\partial\sigma(\mu)+
\partial_{{\delta_1^2}/2}h(s).
\end{equation}
Since $s\mu \geq 0$, there exists $t\in\RP$ such that $\mu=t s$.
Moreover, since $\sigma$ is positively homogeneous, $\partial\sigma
(ts)\subset \partial\sigma(s)$. Therefore 
$x-s+e\in\partial\sigma(s)+\partial_{{\delta_1^2}/2}h(s)\subset
\partial_{{\delta_1^2}/2}\phi(s)$, which implies that
$s\simeq_{\delta}\prox_{\phi}x$ by \cite[Lemma~2.4]{SalVil12}.  
\end{proof}

\begin{remark}{\rm
Let $h\in\Gamma_0(\RR)$, let  $(s,\mu)\in\RR^2$, and let $\delta
\in\RPP$.
Suppose that $0\in\Argmin_{\RR}h$ and that 
$s\simeq_{\delta}\prox_{h}\mu$ with
 $\delta\leq \abs{s}$. Then $s \mu\geq 0$. 
Indeed, since $h(0)=\inf h(\RR)$, we
have
\begin{equation}
h(s)+\frac 1 2 |s-\mu|^2\leq h(0)+\frac 12 \mu^2+\frac 1 2
\delta^2\leq h(s)+\frac 12 \mu^2+\frac 12 \delta^2
\end{equation}
and hence $0\leq (1/2)(s^2-\delta^2)\leq s\mu$.
This shows that Lemma~\ref{lem:compprox}, when $\delta=0$, gives 
$\prox_\phi=\prox_{h} \circ \prox_\sigma$
and consequently generalizes \cite[Proposition~3.6]{Siop07},
relaxing also the condition on the differentiability of $h$ at $0$.
With the help of this result one can compute general thresholders
operators as the proximity operator of 
$\abs{\cdot}+\eta \abs{\cdot}^r$. 
Figure~\ref{fig:fixed_results1} depicts
some instances of these thresholders 
(see also \cite{Siop07}). 
}
\end{remark}

The following lemma is an error-tolerant version of  
\cite[Proposition~12]{Ieee07}.

\begin{lemma}\label{lemma:inexprojprox}
Let $\phi\in\Gamma_0(\RR)$, let $(s,x,p)\in\RR^3$, let
$\delta\in\RP$, and let $C\subset \RR$ be a nonempty closed
interval. Then
\begin{equation}
s\simeq_{\delta}\prox_{\phi}x,\quad\text{and}\quad 
p=\proj_C s \quad\Rightarrow\quad
p\simeq_{\delta}\prox_{\phi+\iota_C}x.
\end{equation}
\end{lemma}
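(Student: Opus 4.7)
The plan is to unfold the definition of $\simeq_\delta$ and reduce the conclusion to a pointwise comparison of the scalar convex function $\Theta\colon v\mapsto\phi(v)+\tfrac12(v-x)^2$, exploiting the one-dimensional geometry of projection onto an interval.

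First I would note that since $p=\proj_C s\in C$, one has $\iota_C(p)=0$, so the desired conclusion $p\simeq_\delta\prox_{\phi+\iota_C}x$ is equivalent to
\begin{equation}
\Theta(p)\leq\inf_{v\in C}\Theta(v)+\tfrac{\delta^2}{2},
\end{equation}
where $\Theta(v)=\phi(v)+\tfrac12(v-x)^2$. The hypothesis $s\simeq_\delta\prox_\phi x$ unfolds to $\Theta(s)\leq\inf_{\RR}\Theta+\tfrac{\delta^2}{2}$, and in particular $\Theta(s)\leq\Theta(v)+\tfrac{\delta^2}{2}$ for every $v\in\RR$, hence for every $v\in C$.

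The key geometric step is the one-dimensional observation that for every $v\in C$ the projection point $p$ lies in the closed segment joining $s$ and $v$. Indeed, either $s\in C$, in which case $p=s$ and there is nothing to prove, or $s$ lies on one side of the interval $C$; writing $C=[a,b]$ (with the obvious modifications for semi-infinite intervals), if $s>b$ then $p=b$ and $v\leq b\leq s$, while if $s<a$ then $p=a$ and $s\leq a\leq v$. In either situation $p\in[\min\{s,v\},\max\{s,v\}]$. Applying the convexity of $\Theta$ on this segment yields
\begin{equation}
\Theta(p)\leq\max\bigl\{\Theta(s),\Theta(v)\bigr\}.
\end{equation}

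To finish, I would combine this with the hypothesis. If $\Theta(v)\geq\Theta(s)$, then $\Theta(p)\leq\Theta(v)\leq\Theta(v)+\tfrac{\delta^2}{2}$. If $\Theta(v)<\Theta(s)$, then $\Theta(p)\leq\Theta(s)\leq\inf_{\RR}\Theta+\tfrac{\delta^2}{2}\leq\Theta(v)+\tfrac{\delta^2}{2}$. In both cases $\Theta(p)\leq\Theta(v)+\tfrac{\delta^2}{2}$ for every $v\in C$; taking the infimum over $v\in C$ gives the desired inequality.

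There is no real obstacle here: the only nontrivial ingredient is the one-dimensional "betweenness" of $p$, which is specific to $C$ being an interval in $\RR$ and fails in higher dimensions, so the proof hinges on this being a scalar lemma rather than an attempt to invoke a general projection/prox reduction in a Hilbert space.
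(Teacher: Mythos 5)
Your proof is correct. It differs from the paper's argument in its key mechanism: the paper introduces the exact proximal point $\bar{s}=\prox_{\phi}x$ and exploits the unimodality of $g=\phi+\tfrac12(\cdot-x)^2$ (decreasing on $\left]-\infty,\bar{s}\right]$, increasing on $[\bar{s},+\infty[$), then runs a case analysis on the relative positions of $s$, $\bar{s}$, and $C$ --- in one subcase transporting the $\epsilon$-minimizer property from $s$ to $\inf C$ by monotonicity, in the other observing that $\proj_C\bar{s}$ is the exact minimizer of $g+\iota_C$. You avoid $\bar{s}$ entirely: the betweenness $p\in[\min\{s,v\},\max\{s,v\}]$ for every $v\in C$, combined with convexity of $\Theta$ along that segment, gives $\Theta(p)\leq\max\{\Theta(s),\Theta(v)\}$, and the two-way split on whether $\Theta(v)\geq\Theta(s)$ closes the argument uniformly. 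Both proofs hinge on the same one-dimensional interval geometry (and, as you note, neither survives in higher dimensions), but yours is slightly more economical --- it never needs the minimizer of $\Theta$ to be identified or even attained as a named point, and it collapses the paper's three-way positional case analysis into a single convexity inequality. The paper's version, in exchange, makes the unimodal structure of the scalar prox problem explicit, which is the picture the surrounding lemmas also rely on.
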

\begin{proof}
Let $g=\phi+(1/2) (\cdot-x)^2$ and let $\epsilon=(\delta^2/2)$.
Since $g$ is convex and $\bar{s}=\prox_{\phi}x$ is its minimum, $g$
is decreasing on $]-\infty, \bar{s}]$ and increasing on $[\bar{s},
+\infty[$. By definition $s$ is a $\epsilon\verb0-0$minimizer of
$g$. The statement is equivalent to the fact that $p$ is a
$\epsilon\verb0-0$minimizer of $g+\iota_C$. If $s \in C$, then $p$
is a fortiori an $\epsilon\verb0-0$minimizer of $g+\iota_C$. We now
consider two cases. First suppose  that $s<\inf C$. If $s<\inf
C\leq\bar{s}$, then $\inf C$ is still an
$\epsilon\verb0-0$minimizer of $g$ and $\inf C \in C$.
Thus $p=\inf C$ is 
an $\epsilon\verb0-0$minimizer of $g+\iota_C$. If either $s
\leq \bar{s}\leq\inf C$ or $\bar{s}\leq s<\inf C$, we have
$p=\proj_C \bar{s}=\inf C$, which is the minimum of $g+\iota_C$,
since $g$ is increasing on $[\bar{s}, +\infty[$. The second case
$\sup C<s$ is treated likewise.
\end{proof}

\begin{proposition}
\label{p:20150912a}
Let $\HH$ be a separable real Hilbert space and let 
$(o_k)_{k \in\KK}$ be an orthonormal basis of $\HH$, where $\KK$
is an at most countable set.
Let $(h_k)_{k \in\KK}$ be a family of convex functions from $\RR$
to $\RR$ such that, for every $k \in\KK$, $h_k \geq h_k(0)=0$. 
Let $(C_k)_{k \in\KK}$ be a family of 
closed intervals in $\RR$ such that $0\in\bigcap_{k\in\KK} C_k$, let
$(D_k)_{k\in\KK}$ be a family of nonempty
closed bounded intervals in $\RR$. Suppose that 
$(h_k^*(-(\inf D_k)_+))_{k \in\KK}$ and
$(h_k^*((\sup D_k)_-))_{k \in\KK}$ are summable,
and set 
\begin{equation}
G\colon\HH\to\RX\colon u 
\mapsto\sum_{k\in\KK}(\iota_{C_k}+\sigma_{D_k}+h_k)(\scal{u}{o_k}).
\end{equation}
Let $w\in\HH$, let $(\alpha_k)_{k \in\KK}\in\RR^{\KK}$,
let $(\xi_k)_{k \in\KK}\in\ell^1(\KK)$, set
$\delta=\sqrt{\sum_{k \in\KK}\xi_k}$, and let 
\begin{equation}
\label{algoprox}
\begin{array}{l}
\text{for every}\;k\in\KK\\
\left\lfloor
\begin{array}{l}
\chi_k=\scal{w}{o_k}\\
\abs{\alpha_{k}}\leq  \displaystyle
\frac{\xi_k}{4 \gamma \max\{h_k (\abs{\chi_k}+2), 
h_k(-\abs{\chi_k}-2))\}
+2\abs{\chi_k}+1}\\[2ex]
\pi_{k}=\prox_{\gamma h_k}
\big(\soft{\gamma D_k} \chi_k \big)+ \alpha_{k}\\[1ex]
\nu_k=\proj_{C_k}\big(\sign (\chi_k)
\max\big\{0,\sign(\chi_k) \pi_{k}\big\}\big).
\end{array}
\right.\\
\end{array}
\end{equation}
Now set $v=\sum_{k\in\KK}\nu_ko_k$.
Then $v\simeq_\delta\prox_{\gamma G}w$.
\end{proposition}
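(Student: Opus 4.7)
The plan is to exploit the separability of $G$ across the orthonormal basis $(o_k)_{k\in\KK}$ to reduce to a one-dimensional analysis, and then to chain the three approximation lemmas (Lemmas~\ref{l:20150704a}, \ref{lem:compprox}, and \ref{lemma:inexprojprox}) along the decomposition $g_k = h_k + \sigma_{D_k} + \iota_{C_k}$. First I would record the separability reduction: because $(o_k)_{k\in\KK}$ is orthonormal and $G(u) = \sum_k g_k(\langle u, o_k\rangle)$, the envelope $\Phi_w(u) = G(u) + (1/(2\gamma))\|u-w\|^2$ splits as $\sum_k \Phi_{w,k}(\langle u, o_k\rangle)$, with $\Phi_{w,k}(t) = g_k(t) + (1/(2\gamma))(t-\chi_k)^2$ and $\chi_k = \langle w, o_k\rangle$. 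Consequently $\min_u \Phi_w(u) = \sum_k \min \Phi_{w,k}$, and for $v = \sum_k \nu_k o_k$,
\begin{equation*}
\Phi_w(v) - \min \Phi_w = \sum_k \bigl(\Phi_{w,k}(\nu_k) - \min \Phi_{w,k}\bigr),
\end{equation*}
so it suffices to establish, coordinate by coordinate, that $\nu_k \simeq_{\delta_k} \prox_{\gamma g_k} \chi_k$ with $\delta_k^2 \leq \xi_k$; summing then gives $v \simeq_\delta \prox_{\gamma G} w$ with $\delta^2 \leq \sum_k \xi_k$.

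Next I would chain the three lemmas in a single coordinate. Set $\mu_k = \soft{\gamma D_k} \chi_k = \prox_{\gamma \sigma_{D_k}} \chi_k$. Applying Lemma~\ref{l:20150704a} to $\gamma h_k$ at $\mu_k$ with error $\alpha_k$ shows that the sign-corrected point $\widehat{\pi}_k = \sign(\chi_k)\max\{0,\sign(\chi_k)\pi_k\}$ simultaneously satisfies $\widehat{\pi}_k \simeq_{\delta_k} \prox_{\gamma h_k}\mu_k$ and the sign condition $\widehat{\pi}_k \mu_k \geq 0$, with $\delta_k^2 \leq (2\gamma\beta_k + 2|\mu_k| + 1)|\alpha_k|$ and $\beta_k$ a Lipschitz constant of $h_k$ on $[-1-|\mu_k|,\,1+|\mu_k|]$. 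Then Lemma~\ref{lem:compprox}, applied with $h = \gamma h_k$ and $\sigma = \gamma\sigma_{D_k}$, uses the sign condition as its hypothesis to upgrade the estimate to $\widehat{\pi}_k \simeq_{\delta_k} \prox_{\gamma(h_k + \sigma_{D_k})}\chi_k$. Finally, Lemma~\ref{lemma:inexprojprox}, applied with $\phi = \gamma(h_k + \sigma_{D_k})$ and $C = C_k$, yields $\nu_k = \proj_{C_k}\widehat{\pi}_k \simeq_{\delta_k} \prox_{\gamma g_k}\chi_k$, which is the required coordinatewise estimate.

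The main obstacle I anticipate is the quantitative bound $\delta_k^2 \leq \xi_k$. To extract it from Lemma~\ref{l:20150704a} one must control the local Lipschitz constant $\beta_k$ of $h_k$ on $[-1-|\mu_k|,\,1+|\mu_k|]$ by $\max\{h_k(|\chi_k|+2),\,h_k(-|\chi_k|-2)\}$; this relies on convexity of $h_k$, the minimality $h_k(0)=0$, the secant-slope bound for subgradients at the endpoints, and the contraction $|\mu_k| \leq |\chi_k|$ of soft-thresholding around $0$. A complementary subtlety is to justify that the algorithm's use of $\sign(\chi_k)$ in place of the $\sign(\mu_k)$ appearing in Lemma~\ref{l:20150704a} is harmless: whenever $\mu_k \neq 0$, the soft-threshold satisfies $\sign(\mu_k)=\sign(\chi_k)$, and when $\mu_k = 0$ the sign condition $\widehat{\pi}_k\mu_k \geq 0$ holds trivially. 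Once these two points are settled, substituting the algorithmic bound on $|\alpha_k|$ into the estimate for $\delta_k^2$ (observing that $2\gamma\beta_k + 2|\mu_k|+1 \leq 4\gamma\max\{h_k(|\chi_k|+2),h_k(-|\chi_k|-2)\} + 2|\chi_k|+1$) delivers $\delta_k^2 \leq \xi_k$ and completes the proof.
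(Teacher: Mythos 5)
Your proposal is correct and follows essentially the same route as the paper: the same coordinatewise chaining of Lemmas~\ref{l:20150704a}, \ref{lem:compprox}, and \ref{lemma:inexprojprox} to get $\nu_k\simeq_{\delta_k}\prox_{\gamma g_k}\chi_k$ with $\delta_k^2\leq\xi_k$, followed by summation over $k$ using the separability of $G$ (the paper phrases this via $\scal{\prox_{\gamma G}w}{o_k}=\prox_{\gamma g_k}\chi_k$ rather than via the splitting of the Moreau envelope, but it is the same mechanism). The two subtleties you flag, namely bounding the local Lipschitz constant of $h_k$ by $\max\{h_k(\abs{\chi_k}+2),h_k(-\abs{\chi_k}-2)\}$ and reconciling $\sign(\chi_k)$ with $\sign(\mu_k)$, are exactly the points the paper's proof handles.
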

\begin{proof}
The function $G$ lies in $\Gamma_0(\HH)$ as the composition of 
the linear isometry 
$\HH\to\ell^2(\KK)\colon u\mapsto(\scal{u}{o_k})_{k \in\NN}$
and the function
\begin{equation}
\ell^2(\KK)\to\RX\colon (\mu_k)_{k \in\KK} 
\mapsto \sum_{k \in\KK}g_k(\mu_k),
\quad\text{with}\quad g_k=\iota_{C_k}+\sigma_{D_k}+h_k,
\end{equation}
which belongs to $\Gamma_0(\ell^2(\KK))$ by Lemma~\ref{l:20151126a}.
Now set 
\begin{equation}
\label{eq:mumk0}
(\forall k\in\KK)\quad
\begin{cases}
\mu_{k}=\soft{\gamma D_k} \chi_k \\
s_{k} =\sign(\mu_k)\max\big\{0, \sign(\mu_k)(
\prox_{\gamma h_k} \mu_{k} +\alpha_{k})\big\} \\
\nu_k =\proj_{C_k}s_k.
\end{cases}
\end{equation}
Let $k \in\KK$. Since $\soft{\gamma D_k}$ is nonexpansive and 
$2\gamma\max\{h_k(\abs{\chi_k}+2),h_k(-\abs{\chi_k}-2)\}$ 
is a Lipschitz constant for $\gamma h_k$ 
on the interval $[-\abs{\chi_k}-1,\abs{\chi_k}+1]$, 
it follows from \eqref{eq:mumk0}
and Lemma~\ref{l:20150704a} that
\begin{equation}
\label{eq:inexcompoprox}
\begin{cases}
\delta_{k}^2 =\big(4 \gamma 
\max\{h_k(\abs{\chi_k}+2),h_k(-\abs{\chi_k}-2)\}
+2 \abs{
\chi_k} +1 \big) \abs{\alpha_{k}}\\
s_{k} \simeq_{{\delta}_{k}}\prox_{\gamma h_k}
(\prox_{\gamma\sigma_{D_k}}\chi_k)\\
s_{k}\prox_{\gamma \sigma_{D_k}} \chi_k\geq 0.
\end{cases}
\end{equation}
Thus, Lemma~\ref{lem:compprox} yields
\begin{equation}
s_{k}\simeq_{{\delta}_{k}}\prox_{\gamma(h_k+\sigma_{D_k})}\chi_k,
\end{equation}
and, using Lemma~\ref{lemma:inexprojprox}, we obtain
$\nu_k\simeq_{{\delta}_{k}}\prox_{\gamma g_k}\chi_k$.
Hence, by Definition~\ref{d:287s54},
\begin{equation}
\label{eq:ineqGLMalgo}
\gamma g_k(\nu_k)+\frac 1 2 \abs{\nu_k-
\chi_k}^2\leq  \gamma g_k\big(\prox_{\gamma g_k}
\chi_k\big)+\frac1 2 \abs{\prox_{\gamma g_k}
\chi_k-\chi_k}^2+\frac{\delta_{k}^2}{2}.
\end{equation}
On the other hand, we derive from 
\cite[Example~2.19]{Smms05} and
\cite[Proposition~3.6]{Siop07} that
\begin{equation} 
\label{eq:proxcomponent}
\scal{\prox_{\gamma G}w}{o_k}=\prox_{\gamma g_k} \chi_k.
\end{equation}
Thus, summing the inequalities \eqref{eq:ineqGLMalgo} over $k$, 
we obtain
\begin{align}
\label{eq:sumphik}
\nonumber\hskip -5mm &\gamma \sum_{k\in\KK} g_k(\nu_k)+\frac 1 2
\sum_{k\in\KK} \abs{\nu_k-\chi_k}^2\\
\nonumber 
&\leq\gamma \sum_{k\in\KK}\Big(g_k\big(
\scal{\prox_{\gamma g}w}{o_k}\big)+\frac1 2
\big\lvert \scal{\prox_{\gamma G}w}{o_k}-
\scal{w}{o_k}\big\rvert^2\Big)
+\frac 1 2 \sum_{k\in\KK} \delta_{k}^2\\
&\leq \gamma G\big(\prox_{\gamma G}w\big) 
+ \frac1 2 \norm{\prox_{\gamma G}w-w}^2 
+\frac 1 2 \sum_{k\in\KK} \xi_{k}\nonumber\\
&<+\infty.
\end{align}
Thus, \eqref{e:20151127a} and \eqref{eq:sumphik} yield
$(\nu_k)_{k \in\NN} \in\ell^2(\KK)$ and one can find
$v \in\HH$ such that, for every $k \in\KK$, $\chi_k=\nu_k$.
Hence,
\begin{equation}
\gamma G(v)+\frac 1 2 \norm{v-w}^2
\leq \gamma G(\prox_{\gamma G}w) +
\frac 1 2 \norm{\prox_{\gamma G}w-w}^2 +
\frac 1 2 \sum_{k\in\KK} \xi_{k} 
\end{equation}
and finally $v\simeq_{\delta} \prox_{\gamma G}w$, 
where $\delta=\sqrt{\sum_{k\in\KK}\xi_{k}}$.
\end{proof}

\begin{proof}[of Theorem~\ref{thm:algoconvergence}]
\ref{thm:algoconvergencei}: Lemma~\ref{l:20151126a} guarantees  
that $G\in\Gamma_0(\ell^2(\KK))$, that $G$ is coercive, and 
that $\dom G \subset \ell^r(\KK)$. The statement therefore
follows from \cite[Corollary~11.15(ii)]{Livre1}.

\ref{thm:algoconvergenceii}--\ref{thm:algoconvergenceiii}:
Let $\widehat{F}_n \colon \HH \to \RR\colon u \to (1/n)
\sum_{i=1}^n (A u (x_i)-y_i)^2$. 
Then, for every $u \in\ell^2(\KK)$, $\nabla \objfunc_n(u)=(2/ n)
\sum_{i=1}^n (\scal{u}{\Phi(x_i)}-y_i) \fmap(x_i)$.
Hence, since $\norm{\Phi(x_i)}_2\leq\kappa$,  $\nabla \objfunc_n$
is Lipschitz continuous with constant $2 \kappa^2$.
Therefore, the statement follows from Theorem~\ref{t:222} and 
Proposition~\ref{p:20150912a}. It remains to show the 
convergence properties of 
${(\norm{u_m-\widehat{u}}_r)}_{m \in\NN}$ and 
${(\norm{v_m-\widehat{u}}_r)}_{m \in\NN}$. We focus on the
sequence ${(\norm{u_m-\widehat{u}}_r)}_{m \in\NN}$, since 
${(\norm{v_m-\widehat{u}}_r)}_{m \in\NN}$ can be treated analogously.
It follows from Lemma~\ref{lem:totconvG} and the convexity of
$\widehat{F}_n$ that
\begin{equation}
\label{eq:20151127b}
(\forall\, m \in\NN)\quad (\widehat{F}_n+\lambda G)(u_m)-
(\widehat{F}_n+\lambda G)(\widehat{u}) \geq
\frac{\eta\lambda M
\norm{u_m-\widehat{u}}_r^2}{\big( \norm{\widehat{u}}_r 
+\norm{u_m-\widehat{u}}_r\big)^{2-r}}.
\end{equation}
Therefore, since $(\widehat{F}_n+\lambda
G)(u_m)-(\widehat{F}_n+\lambda G)(\widehat{u}) \to 0$ as $m \to
+\infty$ and $\psi\colon \RR_+ \to \RR \colon t
\mapsto t^2/(\norm{\widehat{u}}+t)^{2-r}$ is strictly increasing
with $\psi(0)=0$, we obtain $\norm{u_m-\widehat{u}}_r \to 0$.
Moreover, taking $\rho \in\RPP$ such that $\sup_{m \in\NN} \big(
\norm{\widehat{u}}_r+\norm{u_m-\widehat{u}}_r \big)^{2-r} \leq
\rho$, \eqref{eq:20151127c} follows from \eqref{eq:20151127b}.
\end{proof}



\begin{thebibliography}{23}

\bibitem{Anto02}
A. Antoniadis, D. Leporini, and J.-C. Pesquet.
Wavelet thresholding for some classes of non-Gaussian noise,
{\em Statistica Neerlandica},
56:434--453, 2002.

\bibitem{Atto96}
H. Attouch. 
Viscosity solutions of minimization problems,
{\em SIAM Journal on Optimization,} 
6:769--805, 1996.

\bibitem{Livre1} 
H. H. Bauschke and P. L. Combettes.
{\em Convex Analysis and Monotone Operator Theory in Hilbert 
Spaces.}
Springer, New York, 2011.

\bibitem{Bred09}
K. Bredies.
A forward-backward splitting algorithm for the minimization of
non-smooth convex functionals in Banach space.
\emph{Inverse Problems}, 25: art. 015005, 2009.

\bibitem{BulVan11}
P. B{\"u}hlmann and S. van de Geer.
\emph{Statistics for High-Dimensional Data}.
Springer, Heidelberg, 2011.

\bibitem{ButIusZal03}
D. Butnariu, A. N. Iusem, and C. Z{\u{a}}linescu.
On uniform convexity, total convexity and convergence of the
proximal point and outer {B}regman projection algorithms 
in {B}anach spaces.
\emph{Journal of Convex Analysis}, 10:35--61, 2003.
 
\bibitem{ChaCombPesqWajs07}
C. Chaux, P. L. Combettes, J.-C. Pesquet, and V. Wajs.
A variational formulation for frame-based inverse problems.
\emph{Inverse Problems}, 23:1495--1518, 2007.

\bibitem{Sico00}
P. L. Combettes.
Strong convergence of block-iterative outer approximation
methods for convex optimization.
\emph{SIAM Journal on Control and Optimization},
38:538--565, 2000.

\bibitem{Siop07}
P. L. Combettes and J.-C. Pesquet.
Proximal thresholding algorithm for minimization over 
orthonormal bases.
\emph{SIAM Journal on Optimization}, 18:1351--1376, 2007.

\bibitem{Ieee07}
P.~L. Combettes and J.-C. Pesquet.
A Douglas-Rachford splitting approach to nonsmooth convex
variational signal recovery.
\emph{IEEE Journal of Selected Topics in Signal Processing}, 
1:564--574, 2007.

\bibitem{arxiv14}
P. L. Combettes, S. Salzo, and S. Villa.
Consistency of regularized learning schemes in Banach spaces.
arXiv:1410.6847, 2014.

\bibitem{Smms05}
P. L. Combettes and V. R. Wajs.
Signal recovery by proximal forward-backward splitting.
\emph{Multiscale Modeling and Simulation}, 
4:1168--1200, 2005.

\bibitem{CucSma02}
F. Cucker and S. Smale.
On the mathematical foundations of learning.
\emph{Bulletin of the American Mathematical Society (N.S.)}, 
39:1--49, 2002.
  
\bibitem{DauDefDem04}
I. Daubechies, M. Defrise, and C. De Mol.
An iterative thresholding algorithm for linear inverse problems
with a sparsity constraint.
\emph{Communications on Pure and Applied Mathematics},
57:1413--1457, 2004.
  
\bibitem{Davi15}
D.~Davis and Y.~Yin.
Convergence rate analysis of several splitting schemes.
arXiv:1406.4834v3, 2015.

\bibitem{Demo09}
C. De Mol, E. De Vito, and L. Rosasco.
Elastic-net regularization in learning theory.
\emph{Journal of Complexity}, 
25:201--230, 2009.

\bibitem{DeVito2004}
E. De Vito, L. Rosasco, A. Caponnetto, M. Piana, and A. Verri.
Some properties of regularized kernel methods.
\emph{Journal of Machine Learning Research}, 
5:1363--1390, 2004.

\bibitem{Devito05}
E. De Vito, L. Rosasco, A. Caponnetto, U. De Giovannini, and F.
Odone.
Learning from examples as an inverse problem.
\emph{Journal of Machine Learning Research}, 
6:883--904, 2005.

\bibitem{Evge00}
T. Evgeniou, M. Pontil, and T. Poggio.  
Regularization networks and support vector machines.
\emph{Advances in Computational Mathematics}, 
13:1-50, 2000.

\bibitem{Fu1998}
W. J. Fu.
Penalized regressions: the bridge versus the lasso.
\emph{Journal of Computational and Graphical Statistics}, 7:397--416, 1998.

\bibitem{Gyorfi02}
L. Gy{\"o}rfi, M. Kohler, A. Krzy{\.z}ak, and H. Walk.
\emph{A Distribution-Free Theory of Nonparametric Regression}.
Springer-Verlag, New York, 2002.

\bibitem{Hoe70}
A. E. Hoerl and R. W. Kennard.
Ridge regression: Biased estimation for nonorthogonal problems.
\emph{Technometrics}, 12:55--67, 1970.

\bibitem{Kol2009}
V. Koltchinskii.
Sparsity in penalized empirical risk minimization.
\emph{Annales de l'Institut Henri Poincar\'e Probabilit\'es et Statistiques}, 45:7--57,
2009.

\bibitem{Mor62b} 
J. J. Moreau.
Fonctions convexes duales et points proximaux dans un espace 
hilbertien,
\emph{Comptes Rendus de l'Acad\'emie des Sciences de Paris,}
255:2897--2899, 1962.

\bibitem{Nest13}
Yu. Nesterov.
Gradient methods for minimizing composite functions.
\emph{Mathematical Programming, Ser. B}, 140:125--161,
2013.

\bibitem{LivrePol} 
B. T. Polyak.
{\em Introduction to Optimization.}
Optimization Software Inc., New York, 1987.

\bibitem{SalVil12}
S. Salzo and S. Villa.
Inexact and accelerated proximal point algorithms.
\emph{Journal of Convex Analysis}, 19:1167--1192, 2012.
 
\bibitem{Schm11}
M. Schmidt, N. Le Roux, and F. Bach.
Convergence rates of inexact proximal-gradient methods for 
convex optimization. In 
\emph{Advances in Neural Information Processing Systems}, 
24:1458--1466, 2011.

\bibitem{SchHer01}
B. Sch\"olkopf, R. Herbrich, and A. J. Smola.
A generalized representer theorem.
In D. Helmbold and B. Williamson, editors, \emph{Computational
Learning Theory}, volume 2111 of \emph{Lecture Notes in Computer
Science},
pp. 416--426. Springer, Berlin, 2001.

\bibitem{Villa13}
S. Villa, S. Salzo, L. Baldassarre, and A. Verri.
Accelerated and inexact forward-backward algorithms.
\emph{SIAM Journal on Optimization}, 23:1607--1633, 2013.

\bibitem{Wajs2007}
V. Wajs.
D\'ecompositions et algorithmes proximaux pour l'analyse et le
traitement it\'eratif des signaux, 2007.
Th\`ese de doctorat, Universit\'e Pierre et Marie Curie, Paris.

\bibitem{Livre3}
V. Yurinsky.
\emph{Sums and {G}aussian Vectors}, volume 1617 of \emph{Lecture
Notes in Mathematics}.
Springer-Verlag, Berlin, 1995.

\bibitem{Zalinescu02}
C. Z{\u{a}}linescu.
\emph{Convex Analysis in General Vector Spaces}.
World Scientific, River Edge, NJ, 2002.

\bibitem{ZouHastie2005}
Z. Zou and T. Hastie.
Regularization and variable selection via the elastic net.
\emph{Journal of the Royal Statistical Society: Series B}, 
67:301--320, 2005.

\end{thebibliography}


\appendix

\section{An auxiliary result}
\label{app:lsc}
The following result is a generalization of 
\cite[Proposition~5.14]{Smms05}.

\begin{lemma}
\label{l:20151126a}
Let $\KK$ be an at most countable set. For every $k\in\KK$,
let $C_k$ be a closed interval in $\RR$ such that $0\in C_k$,
let $D_k$ be a nonempty closed bounded interval in $\RR$,
and let $h_k\in\Gamma^+_0(\RR)$ be such that $h_k(0)=0$. Set
\begin{equation}
G\colon \ell^2(\KK)\to \RX\colon (\xi_k)_{k \in\KK} \mapsto \sum_{k
\in\KK} g_k(\xi_k),
\quad\text{where}\quad g_k=\iota_{C_k}+\sigma_{D_k}+h_k.
\end{equation}
Let $r \in\left]1,2\right]$ and consider the following statements:
\begin{enumerate}[label=\rm(\alph*)]
\item
\label{l:20151126a_a} 
$\sum_{k\in\KK}|(\inf D_k)_+|^2<\pinf$ and
$\sum_{k\in\KK}|(\sup D_k)_-|^2<\pinf$.
\item
\label{l:20151126a_b}
$\sum_{k\in\KK}h_k^*\big(-(\inf D_k)_+)<\pinf$
and $\sum_{k\in\KK}h_k^*((\sup D_k)_-)<\pinf$.
\item
\label{l:20151126a_c}
$\sum_{k\in\KK}|(\inf D_k)_+|^{r^*}<\pinf$ and
$\sum_{k\in\KK}|(\sup D_k)_-|^{r^*}<\pinf$.
\end{enumerate}
Then the following hold:
\begin{enumerate}
\item
\label{l:20151126ai}
Suppose that \ref{l:20151126a_a} or \ref{l:20151126a_b} is
satisfied. Then $G \in\Gamma_0(\ell^2(\KK))$.
\item
\label{l:20151126aii}
Suppose that \ref{l:20151126a_b} is satisfied.
Then $\inf\, G(\ell^2(\KK))>\minf$.
\item
\label{l:20151126aiii}
Suppose that, for every $k \in\KK$, $h_k \geq \eta \abs{\cdot}^r$ 
for some $\eta \in\RPP$. Then 
\ref{l:20151126a_a}$\Rightarrow$\ref{l:20151126a_c}%
$\Rightarrow$\ref{l:20151126a_b}.
\item
\label{l:20151126aiv}
Suppose that, for every $k\in\KK$, $h_k-\eta
\abs{\cdot}^r\in\Gamma^+_0(\RR)$ for some $\eta\in\RPP$ 
and that \ref{l:20151126a_c} holds.
Then, for every $\eta^\prime\in\left]0,\eta\right[$, there exists 
$H \in\Gamma_0(\ell^2(\KK))$ such that
$G\colon u\mapsto H(u)+\eta^\prime \sum_{k\in\KK}|\mu_k|^r$,
$\dom G\subset \ell^r(\KK)$, and $G$ is coercive in $\ell^2(\KK)$.
\end{enumerate}
\end{lemma}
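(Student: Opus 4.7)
The plan is to derive a uniform-in-$\mu$ lower bound on each summand $h_k(\mu)+\sigma_{D_k}(\mu)$ that is summable in $k$ under hypothesis \ref{l:20151126a_b}, and a linearly-in-$|\mu|$ lower bound summable under hypothesis \ref{l:20151126a_a}; these will yield parts \ref{l:20151126ai} and \ref{l:20151126aii}. The comparison of the three hypotheses in \ref{l:20151126aiii} then reduces to standard conjugate-duality estimates, and \ref{l:20151126aiv} will follow by peeling off a fraction $\eta'|\cdot|^r$ from each $h_k$ and reinvoking the earlier parts.

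For part \ref{l:20151126aii}, I would apply the Fenchel--Young inequality $h_k(\mu)\geq \mu v-h_k^{*}(v)$ with $v=(\sup D_k)_{-}$ when $\mu\geq 0$ and $v=-(\inf D_k)_{+}$ when $\mu\leq 0$. Combined with the identity $\sigma_{D_k}(\mu)=\max\{\mu\inf D_k,\,\mu\sup D_k\}$ and a brief case split on the sign of $\mu$ and on whether $0\in D_k$, the two instances of Young's inequality collapse to
\begin{equation}
\label{e:plan_fy}
(\forall\mu\in\RR)\quad h_k(\mu)+\sigma_{D_k}(\mu)\geq -\max\{h_k^{*}(-(\inf D_k)_{+}),\,h_k^{*}((\sup D_k)_{-})\},
\end{equation}
and summing over $k$ under \ref{l:20151126a_b} gives $\inf G(\ell^2(\KK))>-\infty$. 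For \ref{l:20151126ai} under \ref{l:20151126a_b}, the same bound shows that, up to a finite additive constant, $G$ is the pointwise sum of nonnegative lsc convex summands, hence convex and lsc on $\ell^2(\KK)$ by a monotone-convergence argument on the partial sums, while properness is ensured by $G(0)=0$. Under \ref{l:20151126a_a}, inequality \eqref{e:plan_fy} is not available, so I would instead invoke the crude bound $\sigma_{D_k}(\mu)\geq -|\mu|\max\{(\inf D_k)_{+},(\sup D_k)_{-}\}$ together with Cauchy--Schwarz in $\ell^2(\KK)$ to express $G$ as the difference between a monotone supremum of nonnegative lsc partial sums and the norm-continuous map $u\mapsto \sum_k M_k|\mu_k|$, where $M_k$ is the pointwise maximum; this puts $G$ in $\Gamma_0(\ell^2(\KK))$.

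For part \ref{l:20151126aiii}, \ref{l:20151126a_a}$\Rightarrow$\ref{l:20151126a_c} is immediate from $r^{*}\geq 2$: an $\ell^2$-summable sequence tends to $0$, hence its terms eventually lie in $[0,1]$, so their $r^{*}$-th powers are dominated by their squares; and \ref{l:20151126a_c}$\Rightarrow$\ref{l:20151126a_b} under $h_k\geq \eta|\cdot|^r$ follows by conjugation, $h_k^{*}\leq (\eta|\cdot|^r)^{*}=c_{\eta,r}|\cdot|^{r^{*}}$ with an explicit constant. For part \ref{l:20151126aiv}, I would set $\tilde h_k=h_k-\eta'|\cdot|^r$; the hypothesis $h_k-\eta|\cdot|^r\in\Gamma_0^{+}(\RR)$ forces $\tilde h_k\in\Gamma_0^{+}(\RR)$ with $\tilde h_k(0)=0$ and $\tilde h_k\geq(\eta-\eta')|\cdot|^r$, so \ref{l:20151126aiii} applied to $(\tilde h_k)$ together with part \ref{l:20151126ai} shows that
\begin{equation}
H\colon u\mapsto \sum_{k\in\KK}(\iota_{C_k}+\sigma_{D_k}+\tilde h_k)(\mu_k)
\end{equation}
belongs to $\Gamma_0(\ell^2(\KK))$, and by construction $G=H+\eta'\sum_{k\in\KK}|\cdot|^r$. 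The inclusion $\dom G\subset \ell^r(\KK)$ and coercivity of $G$ on $\ell^2(\KK)$ both follow from $G\geq \inf H(\ell^2(\KK))+\eta'\|\cdot\|_r^r$ combined with the continuous embedding $\ell^r(\KK)\hookrightarrow\ell^2(\KK)$, which gives $\|\cdot\|_2\leq\|\cdot\|_r$ for $r\in\left]1,2\right]$.

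The step I expect to be most delicate is the lower semicontinuity in \ref{l:20151126ai} under \ref{l:20151126a_a}: since the individual $g_k$ can be negative, $G$ is not a monotone countable sum and Fatou's lemma does not apply directly; the decomposition of $G$ as the difference of a sum of nonnegative lsc terms and a norm-continuous Young-type remainder is the essential trick, and it relies crucially on the precise $\ell^2$-tail control supplied by \ref{l:20151126a_a}, which is why an analogous argument under \ref{l:20151126a_b} alone cannot be routed through \eqref{e:plan_fy} without separate care.
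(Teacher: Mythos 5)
Your proposal is correct and follows essentially the same route as the paper's proof: the Fenchel--Young bound $-(\sigma_{D_k}+h_k)\leq\max\{h_k^{*}(-(\inf D_k)_{+}),h_k^{*}((\sup D_k)_{-})\}$ for \ref{l:20151126aii} and for lower semicontinuity under \ref{l:20151126a_b}, a summable minorant of $-g_k$ (linear in $|\mu_k|$ in your version, quadratic via Young's inequality in the paper's, which are interchangeable) under \ref{l:20151126a_a}, the embedding $\ell^2(\KK)\subset\ell^{r^*}(\KK)$ plus $h_k^{*}\leq(\eta|\cdot|^r)^{*}$ for \ref{l:20151126aiii}, and the peeling-off decomposition $g_k=\iota_{C_k}+\sigma_{D_k}+\tilde h_k+\eta'|\cdot|^r$ for \ref{l:20151126aiv}. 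You in fact supply slightly more detail than the paper on the final claims of \ref{l:20151126aiv} ($\dom G\subset\ell^r(\KK)$ and coercivity), which the paper leaves implicit.
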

\begin{proof}
We first observe that, if there exist 
$(\chi_k)_{k \in\KK}\in\ell^1_+(\KK)$ and $b\in\RP$ such that
\begin{equation}
\label{eq:20150704b}
(\forall k\in\KK)\quad-g_k\leq\chi_k+b \abs{\cdot}^2,
\end{equation}
then $G\in\Gamma_0(\ell^2(\KK))$.

\ref{l:20151126ai}:
Let $k\in\KK$. Since
\begin{equation}
(\forall \mu \in\RR)\quad 
\sigma_{D_k}(\mu)=
\begin{cases}
\mu \sup D_k & \text{if}\;\mu \geq 0\\
\mu \inf D_k & \text{if}\;\mu<0,
\end{cases}
\end{equation}
we have
\begin{align}
\label{eq:20151126c}
(\forall \mu \in\RR)\quad -g_k(\mu) 
&\leq-\sigma_{D_k}(\mu)-h_k(\mu)\nonumber\\
&\leq\max\{(\mu)_- (\inf D_k)_+, (\mu)_+ (\sup D_k)_-\}-h_k(\mu).
\end{align}
Hence, in order to guarantee condition \eqref{eq:20150704b} for 
some ${(\chi_k)}_{k \in\KK} \in\ell^1_+(\KK)$ and $b \in\RPP$,
it is sufficient to require condition \ref{l:20151126a_a} 
or \ref{l:20151126a_b} (note that $h_k^* \geq 0$, 
since $h_k(0)=0$).
Therefore in this case $G\in\Gamma_0(\ell^2(\KK))$.

\ref{l:20151126aii}:
It follows from \eqref{eq:20151126c} that
\begin{equation}
\label{e:20151127a}
(\forall\, k \in\KK)\qquad 
- g_k\leq\max\big\{ h_k^*\big( -(\inf D_k)_+\big), 
h_k^*\big( (\sup D_k)_- \big) \big\}.
\end{equation}
Hence, for every $u \in\ell^2(\KK)$, 
$- G(u)\leq\sum_{k \in\KK} \max\big\{ h_k^*\big( -(\inf D_k)_+\big), 
h_k^*\big( (\sup D_k)_- \big) \big\}<+\infty$.

\ref{l:20151126aiii}:
For every $k \in\KK$, 
$h_k^*\leq(r \eta)^{1-r^*} (r^*)^{-1} \abs{\cdot}^{r^*}$.
The statement therefore follows by observing that, 
since $2\leq r^*$, $\ell^2(\KK) \subset \ell^{r^*}(\KK)$.

\ref{l:20151126aiv}:
Setting, for every $k \in\KK$, $\tilde{h}_k=h_k-\eta^\prime \abs{\cdot}^r$, 
we have $g_k=\iota_{C_k}+\sigma_{D_k}+ \tilde{h}_k 
+ \eta^\prime \abs{\cdot}^r$, with
$(\eta -\eta^\prime) \abs{\cdot}^r\leq\tilde{h}_k \in\Gamma^+_0(\RR)$. It follows
from \ref{l:20151126ai} and \ref{l:20151126aiii} that, 
for every $u=(\mu_k)_{k\in\KK} \in\ell^2(\KK)$, 
$G(u)=H(u)+\eta^\prime \sum_{k \in\KK} \abs{\mu_k}^r$,
for some $H \in\Gamma_0( \ell^2(\KK))$.
\end{proof}

\section{Proximity operators of power functions}
\label{app:proxr}
It follows from
\cite[Example 4.4]{ChaCombPesqWajs07} that, for every 
$\gamma\in\RPP$ and every $r\in [1,2]$, 
\begin{equation}
\label{eq:proxpower}
(\forall\, \mu \in\RR)\quad\prox_{\gamma
\abs{\cdot}^r}\mu=\xi\sign(\mu),\quad \text{where}\quad \xi
\geq 0 \quad\text{and}\quad \xi+r \gamma \xi^{r-1}=\abs{\mu}.
\end{equation}
There are several exponents $r$ for which 
Equation \eqref{eq:proxpower} can be solved explicitly for
$r\in\{3/2,4/3,5/4\}$ \cite{ChaCombPesqWajs07,Wajs2007}.
However, in general, it must be solved iteratively.

\begin{proposition}
\label{prop:powerprox}
Let $\mu\in\RR$, let $\gamma\in\RPP$, let $r\in [1,2]$, and let
$(r_1,r_2)\in [1,2]^2$, be such that $r_1<r_2$. 
Then the following hold:
\begin{enumerate}
\item
\label{item:powerprox0} 
$\prox_{\gamma \abs{\cdot}^r}\colon \RR \to \RR$ is strictly
increasing, nonexpansive, odd, and differentiable,
and $\prox_{\gamma \abs{\cdot}^r}+\iota_{\RP}$ is convex.
\item
\label{item:ineqproxpower} 
We have
\begin{equation}\label{ineq:proxpower}
\min\bigg\{ \frac{\abs{\mu}}{1+r \gamma}, 
\Big( \frac{\abs{\mu}}{1+r
\gamma} \Big)^{\frac {1}{r-1}} \bigg\}\leq
\abs{\prox_{\gamma \abs{\cdot}^r}\mu} \leq
\max\bigg\{ \frac{\abs{\mu}}{1+r \gamma}, 
\Big( \frac{\abs{\mu}}{1+r \gamma} \Big)^{\frac {1}{r-1}} 
\bigg\}.
\end{equation}
\item
\label{ineq:proxpower2} 
Suppose that $\abs{\mu}>1+r_2 \gamma$. Then 
$\abs{\prox_{\gamma \abs{\cdot}^{r_2}}\mu} 
< \abs{\prox_{\gamma\abs{\cdot}^{r_1}}\mu}$.
\item
\label{ineq:proxpower3} 
Suppose that $r>1$ and that $\abs{\mu}>1+r \gamma$. Then 
$\dfrac{\abs{\mu}}{1+r \gamma}\leq \abs{\prox_{\gamma
\abs{\cdot}^{r}}\mu}<\abs{\mu}-\gamma$.
\end{enumerate}
\end{proposition}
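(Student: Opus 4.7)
My approach will be to reduce everything to the scalar equation $f_r(\xi) = |\mu|$ from \eqref{eq:proxpower}, where $f_r\colon \RP \to \RP\colon \xi \mapsto \xi + r\gamma \xi^{r-1}$. Two facts about $f_r$ will do most of the work: it is strictly increasing with $f_r(1) = 1 + r\gamma$, and, for $r \in [1,2]$, it is concave on $\RP$ since $f_r''(\xi) = r(r-1)(r-2)\gamma \xi^{r-3} \leq 0$ on $\RPP$.

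For \ref{item:powerprox0}, oddness of $\prox_{\gamma|\cdot|^r}$ is immediate from evenness of $|\cdot|^r$, and nonexpansiveness is standard for any proximity operator. Strict monotonicity and differentiability on $\RR \smallsetminus \{0\}$ follow from the inverse function theorem applied to $f_r$, whose derivative $1 + r(r-1)\gamma \xi^{r-2}$ is strictly positive on $\RPP$. At $0$, the blow-up $f_r'(\xi) \to +\infty$ as $\xi \to 0^+$ for $r \in (1,2)$ forces $(f_r^{-1})'(0) = 0$, so $\prox_{\gamma|\cdot|^r}$ is still differentiable at the origin; for $r=2$ it is simply linear. Convexity of $\prox_{\gamma|\cdot|^r} + \iota_{\RP}$ reduces to convexity of $f_r^{-1}$ on $\RP$, which is exactly what concavity of $f_r$ buys.

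The estimates in \ref{item:ineqproxpower} will come from a case split on whether $\xi := f_r^{-1}(|\mu|)$ satisfies $\xi \leq 1$ or $\xi \geq 1$ (equivalently, whether $|\mu| \leq 1 + r\gamma$ or $|\mu| \geq 1 + r\gamma$). When $\xi \leq 1$ and $r \in [1,2]$, the inequality $\xi^{r-1} \geq \xi$ holds, and substituting in $|\mu| = \xi + r\gamma \xi^{r-1}$ yields at once the two-sided bounds $\xi \leq |\mu|/(1+r\gamma)$ and $\xi^{r-1} \geq |\mu|/(1+r\gamma)$, the latter being equivalent to $\xi \geq (|\mu|/(1+r\gamma))^{1/(r-1)}$. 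The case $\xi \geq 1$ is symmetric, with the reversed inequality $\xi^{r-1} \leq \xi$ giving the same bounds in the opposite order. Sandwiching between $\min$ and $\max$ then yields \eqref{ineq:proxpower} uniformly.

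For \ref{ineq:proxpower2} and \ref{ineq:proxpower3}, the hypothesis $|\mu| > 1 + r_j\gamma$ forces $\xi_{r_j} > 1$ via monotonicity of $f_{r_j}$. For \ref{ineq:proxpower2}, fixing $\xi > 1$ and differentiating in $r$ gives $\partial_r f_r(\xi) = \gamma \xi^{r-1}(1 + r\ln \xi) > 0$, so $f_{r_2}(\xi_{r_1}) > f_{r_1}(\xi_{r_1}) = |\mu| = f_{r_2}(\xi_{r_2})$, and monotonicity of $f_{r_2}$ delivers $\xi_{r_2} < \xi_{r_1}$. For \ref{ineq:proxpower3}, the lower bound is the $\xi > 1$ case of \ref{item:ineqproxpower}, and the upper bound follows by rewriting $|\mu| - \xi = r\gamma \xi^{r-1}$ and noting that $\xi > 1$ together with $r > 1$ imply $r\xi^{r-1} > 1$, hence $r\gamma\xi^{r-1} > \gamma$. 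The most delicate point I anticipate is the convexity claim in \ref{item:powerprox0}, where one has to see that the constraint $r \leq 2$ is precisely what makes $f_r$ concave on $\RP$, and to handle carefully the derivative blow-up of $f_r$ at $\xi = 0$ to still secure differentiability of $\prox_{\gamma|\cdot|^r}$ there.
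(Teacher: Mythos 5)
Your proposal is correct and follows essentially the same route as the paper's proof: everything is reduced to the strictly increasing, concave scalar map $\xi\mapsto\xi+r\gamma\xi^{r-1}$, with the case split $\xi\lessgtr 1$ for the two-sided bounds, the comparison of $f_{r_1}$ and $f_{r_2}$ at $\xi_1>1$ for the monotonicity in $r$, and concavity of $f_r$ (i.e.\ $r\leq 2$) for the convexity claim. The only departures are cosmetic---you prove the upper bound $\abs{\prox_{\gamma\abs{\cdot}^r}\mu}<\abs{\mu}-\gamma$ by the direct estimate $r\gamma\xi^{r-1}>\gamma$ rather than by specializing part (iii) to $r_1=1$, and you treat differentiability at the origin explicitly, which the paper only addresses in the remark that follows.
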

\begin{proof}
\ref{item:powerprox0}: It follows from \cite[Lemma~2.2(iv) and 
Proposition~2.4]{Siop07} that $\prox_{\tau \abs{\cdot}}$ is
nonexpansive, increasing, and odd. Now set $\psi\colon \RP \to
\RP\colon\xi\mapsto\xi+r\tau \xi^{r-1}$. Clearly $\psi$ is
strictly increasing and concave. Moreover it is differentiable on
$\RPP$ and, for every $\xi\in\RPP$, $\psi^\prime(\xi)=1 +
r(r-1)/\xi^{2-r}$. Hence, from \eqref{eq:proxpower}, for every
$\mu\in\RP$, $\prox_{\tau \abs{\cdot}^r}\mu=\psi^{-1}(\mu)$.
This shows that $\prox_{\tau\abs{\cdot}^r}$ is strictly
increasing, convex, differentiable on $\RPP$ with, for every 
$\mu\in\RPP$, $(\prox_{\tau\abs{\cdot}^r})^\prime\mu
=1/\psi^\prime(\psi^{-1}(\mu))$, that is
\begin{equation}
\label{eq:powerproxdiff}
(\prox_{\gamma\abs{\cdot}^r})^\prime\mu
=\bigg(1+\frac{r(r-1)\gamma}{(\prox_{\gamma
\abs{\cdot}^r}\mu)^{2-r}}\bigg)^{-1}.
\end{equation}

\ref{item:ineqproxpower}: 
According to \eqref{eq:proxpower}, there exists $\xi\in\RP$ such 
that $\prox_{\tau\abs{\cdot}^{r}}\mu=\sign(\mu) \xi$ and  
$\xi+r \tau \xi^{r-1}=\abs{\mu}$. If $\xi\geq 1$, 
then $\abs{\mu}=\xi+r \tau \xi^{r-1}\leq (1+r \tau) \xi$, hence
$\abs{\mu}/(1+r \tau)\leq \xi=\abs{\prox_{\tau
\abs{\cdot}^r}\mu}$. If $\xi<1$, then $\abs{\mu}=\xi+r \tau
\xi^{r-1}\leq (1+r \tau) \xi^{r-1}$, hence $\big(\abs{\mu}/(1 +
r \tau) \big)^{1/(r-1)}\leq \xi=\abs{\prox_{\tau
\abs{\cdot}^r}\mu}$. The first inequality in
\eqref{ineq:proxpower} follows and the second is 
proved analogously.

\ref{ineq:proxpower2}:
In view of \eqref{eq:proxpower} there exist
$\xi_1\in\RP$ and $\xi_2\in\RP$ such that
\begin{equation}
\begin{cases}
\prox_{\tau\abs{\cdot}^{r_1}}\mu=\sign(\mu) \xi_1
\quad\text{and}\quad \xi_1 +
r_1 \tau \xi_1^{r_1-1}=\abs{\mu}\\
\prox_{\tau\abs{\cdot}^{r_2}}\mu=\sign(\mu) \xi_2
\quad\text{and}\quad \xi_2 +
r_2 \tau \xi_2^{r_2-1}=\abs{\mu}.
\end{cases}
\end{equation}
If $\abs{\mu}>1+\tau r_2>1+\tau r_1$, 
it follows from \eqref{ineq:proxpower} that 
\begin{equation}
1<\frac{\abs{\mu}}{1+r_1 \tau}\leq \abs{\xi_1}\quad \text{and}\quad 
1<\frac{\abs{\mu}}{1+r_2 \tau}\leq \abs{\xi_2}.
\end{equation}
Therefore, since $r_1<r_2$ and $\xi_1> 1$,
\begin{equation}
\xi_2+r_2 \tau \xi_2^{r_2-1}=\abs{\mu}=\xi_1+r_1 \tau \xi_1^{r_1-1}
<\xi_1+r_2 \tau \xi_1^{r_2-1}.
\end{equation}
Hence, since $\xi\mapsto\xi+r_2\tau\xi^{r_2-1}$ 
is strictly increasing on $\RP$, we conclude that $\xi_2<\xi_1$.

\ref{ineq:proxpower3}:
Since \eqref{eq:proxpower} implies that
$\prox_{\tau\abs{\cdot}}\mu=\sign(\mu) (\abs{\mu}-\tau)$,
we derive from \ref{ineq:proxpower2} that
\begin{equation}
\abs{\mu}>1+r\tau\quad\Rightarrow\quad
\abs{\prox_{\tau\abs{\cdot}^{r}}\mu}<\abs{\mu}-\tau,
\end{equation}
The first inequality in \ref{ineq:proxpower3} follows directly from
\eqref{ineq:proxpower}.
\end{proof}

\begin{remark}\ 
\begin{enumerate}
\item 
The bounds given in \eqref{ineq:proxpower} can be
useful to initialize the bisection method to solve 
\eqref{eq:proxpower}.
\item 
$(\prox_{\gamma \abs{\cdot}^r})^\prime 0 =0$,
$(\prox_{\gamma \abs{\cdot}^r})^\prime \mu\leq 1$ and
$(\prox_{\gamma \abs{\cdot}^r})^\prime \mu  \to 1$ as $\mu \to
+\infty$.
\item 
$\prox_{\gamma \abs{\cdot}^r}$ has no asymptote as 
$\mu\to+\infty$, since \eqref{eq:proxpower} yields
$\prox_{\gamma \abs{\cdot}^r}\mu-\mu=-r \gamma 
(\prox_{\gamma\abs{\cdot}^r}\mu)^{r-1}\to\minf$ as
$\mu \to +\infty$.
\end{enumerate}
\end{remark}

\end{document}